\def\Box{\vcenter{\vbox{\hrule\hbox{\vrule
     \vbox to 8.8pt{\hbox to 10pt{}\vfill}\vrule}\hrule}}}
\newcommand{\F}{\mathbb{F}}
\newcommand{\PSL}{\mathrm{PSL}}
\newcommand{\PGL}{\mathrm{PGL}}
\newtheorem{thm}{Theorem}[section]
\newtheorem{lemma}[thm]{Lemma}
\newtheorem{corollary}[thm]{Corollary}
\numberwithin{equation}{section}
\newcommand{\cL}{\mathcal L}
\newcommand{\cP}{\mathcal P}
\newcommand{\cS}{\mathcal S}
\definecolor{Purple}{rgb}{0.5,0,0.5}
\def\SL{{\rm SL}} \def\PSL{{\rm PSL}}  \def\PGL{{\rm
PGL}}
\begin{document}
\newcommand{\stopthm}{\begin{flushright}
		\(\box \;\;\;\;\;\;\;\;\;\; \)
\end{flushright}}
\newcommand{\symfont}{\fam \mathfam}

\title{On finite generalized quadrangles with $\mathrm{PSL}(2,q)$ as an automorphism group}

\date{}
\author[add1]{Tao Feng}\ead{tfeng@zju.edu.cn}
\author[add1]{Jianbing Lu\corref{cor1}}\ead{jianbinglu@zju.edu.cn}\cortext[cor1]{Corresponding author}
\address[add1]{School of Mathematical Sciences, Zhejiang University, Hangzhou 310027, Zhejiang, P.R. China}

\begin{abstract}
Let $\mathcal{S}$ be a finite thick generalized quadrangle, and suppose that $G$ is an automorphism group of $\mathcal{S}$. If $G$ acts primitively on both the points and lines of $\mathcal{S}$, then it is known that $G$ must be almost simple. In this paper, we show that if the socle of $G$ is $\mathrm{PSL}(2,q)$ with $q\geq4$, then $q=9$ and $\mathcal{S}$ is the unique generalized quadrangle of order $2$.
\newline

\noindent\text{Keywords:} generalized quadrangle;  point-primitive; line-primitive; projective linear group; automorphism group	

\noindent\text{Mathematics Subject Classification (2020)}: 05B25 20B15 20B25
\end{abstract}	

\maketitle

\section{Introduction}\label{introduction}

A finite \emph{generalized $n$-gon} is a finite point-line incidence geometry whose bipartite incidence graph has diameter $n$ and girth $2n$. It is \emph{thick} if each line contains at least three points and each point is on at least three lines. A finite generalized $3$-gon is simply a projective plane, and a finite generalized $4$-gon is also called a \textit{generalized quadrangle}.  For a thick generalized $n$-gon, there are constants $s,t$ such that each line is incident with $s+1$ points and each point is incident with $t+1$ lines by \cite[Corollary 1.5.3]{GenPoly}, and we say that it has \emph{order} $(s,t)$. The point-line dual of a thick generalized polygon of order $(s,t)$ is a generalized polygon of order $(t,s)$.  An \emph{automorphism} of a generalized $n$-gon $\mathcal{S}$ consists of a permutation of the points and a permutation of the lines which preserve the incidence. An automorphism group of $\mathcal{S}$ is a group of automorphisms, and its full automorphism group is denoted by $\textup{Aut}(\mathcal{S})$.  We refer to the monographs \cite{GenPoly} and \cite{Payne} for more details on generalized polygons and generalized quadrangles.

In 1959 Tits \cite{Tits} introduced the concept of generalized polygons in order to study the simple groups of Lie type systematically, and his work builds a bridge between geometry and group theory.  The Feit-Higman theorem \cite{Feit} shows that finite thick generalized $n$-gons exist only for $n=3,4,6$ or $8$. The automorphism groups of the \emph{classical} generalized polygons are classical or exceptional simple groups of Lie type, and they act primitively on both points and lines. There are many examples of non-classical projective planes and generalized quadrangles, while the construction of non-classical generalized hexagons and octagons is listed as Problem 1 in \cite[Appendix E]{GenPoly}.

There has been extensive work on the classification of finite thick generalized polygons whose automorphism groups satisfy certain transitivity conditions.  Buekenhout and Van Maldeghem \cite{Buekenhout} showed that a finite thick generalized polygon with a point-distance-transitive automorphism group is either classical, dual classical, or the unique generalized quadrangle of order $(3,5)$. Schneider and Van Maldeghem \cite{Schneider} studied the finite thick generalized hexagons and octagons whose automorphism group $G$ is primitive on both the points and lines and transitive on flags. A \emph{flag} is an incident point-line pair. They showed that $G$ must be an almost simple group of Lie type. In \cite{Bamberg2017}, Bamberg et al. strengthened this result by showing that the same conclusion holds under the single assumption of point-primitivity. In \cite{Morgan}, Morgan and Popiel further classified the generalized hexagons and octagons for which the socle of $G$ is a Suzuki group,  or a Ree group of type $^{2}\mathrm{G}_{2}$ or $^{2}\mathrm{F}_{4}$. Recently, Glasby et al. \cite{Glasby}  studied the finite thick generalized hexagons and octagons with a point-primitive automorphism group whose socle is $\mathrm{PSL}(n,q)$, $n\geq2$, and obtained some partial results.

Suppose that $\mathcal{S}$ is a finite thick generalized quadrangle with a point-primitive automorphism group $G$. Then $G$ is not of  holomorph compound type \cite{Bamberg Popiel}. By \cite{Bamberg2021}, $G$ is not an almost simple sporadic group. If $G$ is also transitive on the lines, then the action of $G$ on the points is not of holomorph simple type by \cite{Bamberg Popiel2017} and the generalized quadrangles for which $G$ has a point-regular abelian normal subgroup are classified in \cite{Bamberg2016}. If $G$ is primitive on both points and lines, then it is an almost simple group, and if further $G$ is transitive on flags, then it is almost simple of Lie type, cf. \cite{Bamberg2012}. In this paper, we study the special case where the automorphism group $G$ is primitive on both points and lines and has socle $\mathrm{PSL}(2,q)$, $q\geq4$.  Let $W(q)$ be the generalized quadrangle of order $q$ whose points and lines are the totally singular points and totally singular lines of the symplectic polar space $W(3,q)$ respectively.  We refer to \cite{Payne} for more details about classical generalized quadrangles. The following is our main result.
\begin{thm}\label{main}
Suppose that $G$ is an automorphism group of a finite thick generalized quadrangle $\mathcal{S}$ that is primitive on both points and lines. If $G$ is an almost simple group with socle $\mathrm{PSL}(2,q)$, $q\ge 4$, then $q=9$ and $\mathcal{S}$ is the symplectic quadrangle $W(2)$.
\end{thm}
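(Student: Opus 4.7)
The plan is to proceed by case analysis on the isomorphism type of the intersection $T \cap G_P$ (and, in parallel, of $T \cap G_\ell$), where $T = \PSL(2,q)$ denotes the socle of $G$. Since $G$ is primitive on both points and lines, $G_P$ and $G_\ell$ are core-free maximal subgroups of $G$; by the standard correspondence between maximal subgroups of an almost simple group and the (possibly novelty) subgroups of its socle, together with Dickson's classification of the maximal subgroups of $\PSL(2,q)$, the intersection $T \cap G_P$ belongs to one of the following families: (i) Borel subgroups of order $q(q-1)/d$, where $d=\gcd(2,q-1)$; (ii) dihedral subgroups of order $2(q\pm 1)/d$; (iii) the exceptional subgroups $A_4$, $S_4$, or $A_5$; (iv) subfield subgroups $\PSL(2,q_0)$ or $\PGL(2,q_0)$, where $q = q_0^r$ for some prime $r$. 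By the point-line duality of a generalized quadrangle, I may assume $s \leq t$, so that $v := (s+1)(st+1) \leq b := (t+1)(st+1)$, equivalently $|G_P| \geq |G_\ell|$.

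The main arithmetic machinery combines three facts. First, the parameter identities $[G:G_P] = (s+1)(st+1)$ and $[G:G_\ell] = (t+1)(st+1)$. Second, the flag-counting relation $|G_P|(s+1) = |G_\ell|(t+1)$, arising from the transitive actions of $G_P$ on the $t+1$ lines through $P$ and of $G_\ell$ on the $s+1$ points of $\ell$. Third, Higman's inequalities $s \leq t^2$ and $t \leq s^2$. Together, these yield strong divisibility constraints such as $(t+1) \mid |G_P|$, $(s+1) \mid |G_\ell|$, and $st(s+1)(t+1) \mid |G|$. For each admissible family above, I would substitute the explicit formula for $[G:G_P]$ into $(s+1)(st+1)$ and attempt to solve for $s$ and $t$ as constrained functions of $q$, always remembering that $|G/T|$ is bounded above by $2f$ when $q=p^f$.

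Cases (iii) and (iv) are comparatively tractable. For $T \cap G_P$ of exceptional type $A_4$, $S_4$, or $A_5$, the index $[T:T\cap G_P]$ grows like $q^3/c$ for a small constant $c$, so Higman's bounds force $s$ and $t$ to be of order $q$, and divisibility of $(s+1)(st+1)$ by particular factors of $|G|$ bounds $q$. A direct inspection of the finitely many surviving candidates yields exactly the pair $q=9$, $(s,t)=(2,2)$ with $T\cap G_P\cong S_4$, corresponding to the $15$-point action of $A_6 \cong \PSL(2,9)$, which is precisely the point action of $W(2)$. The subfield case (iv) is eliminated by similarly combining Higman's inequality with the divisibility of $s+1$ and $t+1$ by orders of specific subgroups of $\PSL(2,q_0)$ or $\PGL(2,q_0)$. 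The Borel subcase of (i) gives $[T:T\cap G_P]=q+1$, of order only $q$; this is too small to equal $(s+1)(st+1)$ with $s,t\geq 2$ except for a handful of tiny $q$ that can be ruled out by hand.

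The main obstacle is the case in which both $T \cap G_P$ and $T \cap G_\ell$ are dihedral, so that $[T:T\cap G_P]$ and $[T:T\cap G_\ell]$ each lie in $\{q(q-1)/2,\,q(q+1)/2\}$. Here the indices are of size $\sim q^2$, matching $(s+1)(st+1)$ in the generic thick regime, so the counting identities do not immediately yield contradictions. In this case I would work with the two dihedral orders simultaneously, using $|G_P|(s+1) = |G_\ell|(t+1)$ and the explicit arithmetic of the divisors of $q\pm1$, in particular the interplay between $s+1$ as a divisor of $(q\pm1)/d$ and the requirement that $st+1$ divide $[G:G_P]$, to exclude every $q$. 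Combining this with case (iii) isolates $q = 9$ with $(s,t) = (2,2)$, and the theorem then follows from the uniqueness of $W(2)$ as the generalized quadrangle of order $2$, together with the fact that $\Aut(W(2)) \cong \mathrm{P}\Gamma\mathrm{L}(2,9)$ acts primitively on both points and lines.
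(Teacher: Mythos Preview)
Your overall framework---reduce to the socle, classify $T\cap G_P$ and $T\cap G_\ell$ via Dickson's list, and then do arithmetic with $(s+1)(st+1)=[T:T\cap G_P]$ and Higman's bounds---matches the paper. But several steps as written do not go through.

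First, the Borel case is not dispatched by size. Your claim that $q+1$ is ``too small to equal $(s+1)(st+1)$ with $s,t\ge 2$ except for a handful of tiny $q$'' is false: for instance $q+1=3(2t+1)$ has $s=2$, $t=(q-2)/6$ whenever $q\equiv 2\pmod 6$, and there are infinitely many such prime powers. The correct argument, used in the paper, is structural: the action of $\PSL(2,q)$ on the cosets of a Borel is its natural $2$-transitive action on $\PG(1,q)$, and a $2$-transitive action on points of a thick generalized quadrangle is impossible since it would send collinear pairs to noncollinear pairs.

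Second, your divisibility claims $(t+1)\mid |G_P|$ and $(s+1)\mid |G_\ell|$ rest on the assertion that $G_P$ acts transitively on the lines through $P$ (and dually). That is flag-transitivity, which is \emph{not} among the hypotheses; the identity $|G_P|(s+1)=|G_\ell|(t+1)$ follows from double counting flags alone, but the divisibility does not.

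Third, and most seriously, pure arithmetic does not close out the cases where $M_0\cong M_1$ is one of $A_4$, $S_4$, $A_5$, or $\PSL(2,q_0)$ (your cases (iii) and (iv) with equal stabilisers). Here one faces Diophantine equations like $(s+1)(s^2+1)=q(q^2-1)/120$ with no evident finiteness. The paper's decisive tool, absent from your plan, is to take an involution (or order-$3$ element) $g\in M_0$, compute $|\cP_g|=|\cL_g|$ via the formula $|\cP_g|=|\cP|\cdot|g^X\cap M_0|/|g^X|$, show that $C_X(g)$ (a dihedral or cyclic group) acts transitively on both $\cP_g$ and $\cL_g$, deduce that the fixed structure $\cS_g$ is a thick sub-quadrangle, and then reach a contradiction because a cyclic subgroup of $C_X(g)$ is already transitive on both $\cP_g$ and $\cL_g$---impossible, since no abelian group acts transitively on both the points and the lines of a thick generalized quadrangle. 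This fixed-substructure argument is also what drives the reduction from almost simple $G$ to $G=\PSL(2,q)$ and the elimination of one of the dihedral cases. Without it your plan has a genuine gap in exactly the cases you label ``comparatively tractable.''

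Finally, a bookkeeping point: the surviving example $W(2)$ at $q=9$ has point-stabiliser $\PGL(2,3)\cong S_4$, but this arises as the \emph{subfield} subgroup $\PGL(2,q_0)\le\PSL(2,q_0^2)$ (your case (iv)), not as the ``exceptional $S_4$'' of Dickson's list (which requires $q=p\equiv\pm1\pmod 8$). Your sentence ``the subfield case (iv) is eliminated'' is therefore where the example actually lives.
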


The example $W(2)$ in Theorem \ref{main} arises due to the isomorphism $\mathrm{PSp}(4,2)'\cong\mathrm{PSL}(2,9)$. This paper is organized as follows. In Section 2, we present some preliminary results on generalized quadrangles and the maximal subgroups of almost simple groups with socle $\mathrm{PSL}(2,q)$, $q\ge 4$. In Section 3, we describe a coset geometry model for finite generalized quadrangles with an automorphism group that is  transitive on both points and lines. In Section 4, we present the proof of Theorem \ref{main}.

\section{Preliminaries}\label{pre}

\subsection{Generalized quadrangles}\label{GQ}

Suppose that $\mathcal{S}$ is a finite thick generalized quadrangle of order $(s,t)$, where $s,t\ge 2$. Then two distinct points are incident with at most one common line, two distinct lines are incident with at most one common point, and for each point-line pair $(P,\ell)$ that is not incident there is exactly one point on $\ell$ that is collinear with $P$. If $s=t$, $\mathcal{S}$ is said to have order $s$.

\begin{lemma}\label{HD}
Let $\mathcal{S}$ be a finite thick generalized quadrangle of order $(s,t)$ with point set $\mathcal{P}$ and line set $\mathcal{L}$. Then $|\mathcal{P}|=(s+1)(st+1)$, $|\mathcal{L}|=(t+1)(st+1)$, and the following properties hold:
\begin{enumerate}
\item[(i)] (Higman's inequality) $s\leq t^{2}$ and $t\leq s^{2}$;
\item[(ii)] (Divisibility condition) $s+t$ divides $st(s+1)(t+1)$;
\item[(iii)] $\left(\frac{t+1}{s+1}\right)^{3}<|\mathcal{P}|$,
and $\left(\frac{s+1}{t+1}\right)^{3}<|\mathcal{L}|$.
\end{enumerate}
\end{lemma}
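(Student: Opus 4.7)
My plan is to verify the four assertions in order. For the point count, I would fix a point $P$ and classify the remaining points by their relation to $P$ in the collinearity graph. The $t+1$ lines through $P$ contribute $s(t+1)$ further collinear points (distinct by triangle-freeness, since a geometric triangle would yield a $6$-cycle in the incidence graph, violating girth $8$), and the defining GQ axiom applied to each line through $P$ forces any non-collinear $Q$ to share exactly $t+1$ common neighbors with $P$. Double-counting pairs $(R,Q)$ with $P\sim R\sim Q$, $R\ne P$, $Q\not\sim P$ from both sides then gives the number of non-collinear points as $s^{2}t$, so $|\mathcal{P}| = 1 + s(t+1) + s^{2}t = (s+1)(st+1)$. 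The line count is dual.

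The same local counting shows the collinearity graph on $\mathcal{P}$ is strongly regular with parameters $((s+1)(st+1),\, s(t+1),\, s-1,\, t+1)$; its three eigenvalues $s(t+1),\, s-1,\, -(t+1)$ carry multiplicities $1,\, st(s+1)(t+1)/(s+t),\, s^{2}(st+1)/(s+t)$, and integrality of the latter two yields the divisibility condition (ii). For Higman's inequality (i), I would apply the second Krein condition on this SRG, which after substitution of the parameters (and cancellation of a common factor $(s-1)(t+1)$) simplifies to $t \le s^{2}$; the bound $s \le t^{2}$ follows by point-line duality. (An elementary alternative is to fix a non-incident point-line pair, project the points off the line to their unique closest point on it, and apply a Cauchy--Schwarz estimate on the resulting distribution.)

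Finally, for (iii) it suffices to prove $(t+1)^{3} < (s+1)^{4}(st+1)$. The function $t\mapsto (t+1)^{3}/(st+1)$ has derivative proportional to $2st+3-s$ and so is strictly increasing in $t$ for $s,t\ge 2$; by (i) it is therefore enough to check the boundary case $t = s^{2}$, which reduces via $s^{3}+1 = (s+1)(s^{2}-s+1)$ to verifying that the polynomial $(s+1)^{5}(s^{2}-s+1) - (s^{2}+1)^{3}$ has only positive coefficients, an elementary check. The corresponding bound for $|\mathcal{L}|$ comes by duality. The principal technical step throughout is Higman's inequality (i); the remaining items reduce to routine counting, SRG eigenvalue arithmetic, and elementary algebraic bounds.
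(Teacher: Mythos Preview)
Your argument is correct. For the point and line counts and for (i)--(ii) you supply full proofs (via the collinearity SRG, its eigenvalue multiplicities, and the Krein bound), whereas the paper simply cites Payne--Thas for these standard facts; your route is the conventional one and is fine.

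The real difference is in (iii). The paper's proof is a two-line estimate: from Higman's inequality $t\le s^{2}$ one gets $t+1<(s+1)^{2}$, hence $\frac{t+1}{s+1}<s+1$, and therefore
\[
\left(\frac{t+1}{s+1}\right)^{3}<(s+1)^{2}\cdot\frac{t+1}{s+1}=(s+1)(t+1)<(s+1)(st+1)=|\mathcal{P}|,
\]
the last step using $s\ge 2$. This avoids your monotonicity calculation and polynomial expansion entirely. Your reduction to the boundary $t=s^{2}$ is valid, but note a small slip: the polynomial $(s+1)^{5}(s^{2}-s+1)-(s^{2}+1)^{3}$ expands to $s^{7}+3s^{6}+6s^{5}+2s^{4}+5s^{3}+3s^{2}+4s$, whose constant term is $0$, not positive. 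This does not affect the conclusion (the expression is still strictly positive for $s\ge 1$), but ``only positive coefficients'' overstates it. The paper's approach is shorter and more transparent; yours has the virtue of being entirely self-contained.
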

\begin{proof}
It suffices to prove (iii), since the other properties are taken from \cite[1.2.1-1.2.3]{Payne}.
By (i), we have  $t+1<(s+1)^2$, i.e., $\frac{t+1}{s+1}<s+1$. It follows that
\[
\left(\frac{t+1}{s+1}\right)^{3}<(s+1)^{2}\cdot\frac{t+1}{s+1}=(1+t)(1+s)<|\mathcal{P}|,
\]
which yields the first inequality of (iii). The other inequality then follows from point-line duality.
\end{proof}

A \emph{grid} with parameters $(s_{1},s_{2})$ is a point-line incidence structure $(\mathcal{P}, \mathcal{L}, \mathrm{I})$ with
$\mathcal{P}=\left\{P_{i, j}: 0 \leqslant i \leqslant s_{1}, 0 \leqslant j \leqslant s_{2}\right\}$, $\mathcal{L}=\left\{\ell_{0}, \ldots, \ell_{s_{1}}, \ell_{0}^{\prime}, \ldots, \ell_{s_{2}}^{\prime}\right\}$
such that $P_{i, j} \mathrm{I} \ell_{k}$ if and only if $i=k$ and $P_{i, j} \mathrm{I} \ell_{k}^{\prime}$ if and only if $j=k$. A \emph{dual} \emph{grid} with parameters $(s_1,s_2)$ is the point-line dual of a grid with parameters $(s_2,s_1)$. A grid with parameters $(s,s)$ is a generalized quadrangle of order $(s,1)$, and a dual grid with parameters $(t,t)$ is also a generalized quadrangle of order $(1,t)$.

\begin{lemma}\cite[2.4.1]{Payne}\label{subquadrangle}
Let $g$ be an automorphism of a finite generalized quadrangle $\mathcal{S}=(\mathcal{P}, \mathcal{L})$ of order $(s,t)$. Let $\mathcal{P}_{g}$ and $\mathcal{L}_g$ be the set of fixed points and fixed lines of $g$ respectively, and let $\mathcal{S}_{g}=(\cP_g,\cL_g)$ be the induced incidence substructure on $\mathcal{P}_{g}\times\mathcal{L}_g$. Then  one of the following holds:
\begin{enumerate}
\item[(0)] $\mathcal{P}_{g}=\mathcal{L}_{g}=\varnothing$,
\item[(1)] $\mathcal{L}_{g}=\varnothing$, $\mathcal{P}_{g}$ is a nonempty set of pairwise noncollinear points,
\item[(1')] $\mathcal{P}_{g}=\varnothing$, $\mathcal{L}_{g}$ is a nonempty set of pairwise nonconcurrent lines,
\item[(2)] $\mathcal{L}_{g}$ is nonempty, and $\mathcal{P}_{g}$ contains a point $P$ that is collinear with each point of $\mathcal{P}_{g}$ and is on each line of $\mathcal{L}_{g}$,
\item[(2')]$\mathcal{P}_{g}$ is nonempty, and $\mathcal{L}_{g}$ contains a line $\ell$ that is concurrent with each line of $\mathcal{L}_{g}$ and contains each point of $\mathcal{P}_{g}$,
\item[(3)] $\mathcal{S}_{g}$ is a grid with parameters $(s_1,s_2)$, $s_{1}<s_{2}$,
\item[(3')] $\mathcal{S}_{g}$ is a dual grid with parameters $(s_1,s_2)$, $s_{1}<s_{2}$,
\item[(4)] $\mathcal{S}_{g}$ is a generalized quadrangle of order $\left(s^{\prime}, t^{\prime}\right)$.
\end{enumerate}
\end{lemma}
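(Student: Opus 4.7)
The proof rests on a single structural fact about finite GQs: for any anti-flag $(P, \ell)$, there is a unique point $P^{*} \in \ell$ collinear with $P$ and a unique line $m$ through $P$ incident with $P^{*}$. By this uniqueness, whenever both $P$ and $\ell$ are fixed by $g$, so are $P^{*}$ and $m$. The entire proof amounts to exploiting this propagation of fixedness through the GQ axiom, together with routine case analysis on $(\mathcal{P}_g, \mathcal{L}_g)$.

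\textbf{Step 1 (cases (0), (1), (1')).} If $\mathcal{P}_g = \mathcal{L}_g = \emptyset$ we are in case (0). If $\mathcal{P}_g \ne \emptyset$ but $\mathcal{L}_g = \emptyset$, then any two collinear fixed points would determine a unique, hence fixed, joining line, contradicting $\mathcal{L}_g = \emptyset$; so the fixed points are pairwise non-collinear, which is case (1). The dual argument gives case (1').

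\textbf{Step 2 (cases (2), (2')).} Now assume both $\mathcal{P}_g$ and $\mathcal{L}_g$ are non-empty. Suppose there is a fixed point $P$ incident with every fixed line. For any $Q \in \mathcal{P}_g$ and any $\ell \in \mathcal{L}_g$: if $Q \in \ell$, then $P \sim Q$ through $\ell$; otherwise the key observation supplies a fixed line $m$ through $Q$ meeting $\ell$, and $P \in m$ by hypothesis, so again $P \sim Q$. This is case (2), and the dual argument yields case (2').

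\textbf{Step 3 (cases (3), (3'), (4)).} Assume neither (2) nor (2') applies. Using the key observation, one verifies that $\mathcal{S}_g$ satisfies the incidence axioms of a ``weak generalized quadrangle'': (a) two fixed points lie on at most one common fixed line, (b) two fixed lines share at most one fixed point, (c) every anti-flag of $\mathcal{S}_g$ is connected by a unique path of length two in $\mathcal{S}_g$. To classify $\mathcal{S}_g$, pick two non-concurrent fixed lines $\ell_1, \ell_2$; they exist, since otherwise all fixed lines would be concurrent at a single fixed point, pushing us back into case (2). The map $\ell_1 \cap \mathcal{P}_g \to \ell_2 \cap \mathcal{P}_g$ sending $P$ to the unique fixed $P^{*} \in \ell_2$ with $P \sim P^{*}$ is a bijection by (c). Iterating this across all non-concurrent pairs shows that the number of fixed points on each fixed line is constant --- or, when $g$ permutes two concurrency classes of fixed lines, takes exactly two distinct values. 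The dual statement holds for the number of fixed lines through each fixed point. If both counts are constant and at least $2$, $\mathcal{S}_g$ is a proper subquadrangle (case (4)); if one count equals $2$ and the corresponding structure splits into two $g$-orbits, $\mathcal{S}_g$ is a (possibly non-square) grid or dual grid, giving cases (3) or (3').

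\textbf{Main obstacle.} The delicate part is Step 3: extracting the combinatorial regularity of $\mathcal{S}_g$ after the central cases (2), (2') are excluded. The bijection argument between fixed points on different fixed lines requires the two lines to be non-concurrent, so one must carefully separate the configuration in which all fixed lines meet at a single fixed point (which collapses back to case (2)) from the configuration in which the fixed lines split into two $g$-orbits of different sizes, since it is precisely this latter case that produces the genuinely non-square grids with $s_1 < s_2$.
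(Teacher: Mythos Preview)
The paper does not give its own proof of this lemma; it simply cites \cite[2.4.1]{Payne}. So there is no ``paper's approach'' to compare against, and your task is really to reproduce (a version of) the Payne--Thas argument. Your Steps~1 and~2 are correct and match the standard proof: the key propagation fact (if $P\in\mathcal{P}_g$, $\ell\in\mathcal{L}_g$, $P\notin\ell$, then the foot $P^*\in\ell$ and the line $PP^*$ are fixed) is exactly what drives everything, and your treatment of cases (0), (1), (1'), (2), (2') is fine.

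Step~3, however, contains a genuine error. You write that the two values of $s_\ell$ arise ``when $g$ permutes two concurrency classes of fixed lines'' and later that the grid case comes from ``the fixed lines split[ting] into two $g$-orbits of different sizes''. This cannot be right: by definition $g$ fixes every element of $\mathcal{L}_g$, so $g$ acts \emph{trivially} on $\mathcal{S}_g$ and there are no nontrivial $g$-orbits to speak of. The two parallel classes of lines in a grid are intrinsic to the incidence structure of $\mathcal{S}_g$ itself (two fixed lines lie in the same class iff they are non-concurrent), not produced by any action of $g$. As written, your argument gives no reason why $s_\ell$ takes at most two values, nor why exactly two values forces a grid.

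The correct completion of Step~3 (this is essentially Payne--Thas 2.3.1) is purely combinatorial. Having excluded (2) and (2'), you first check that every fixed point lies on at least two fixed lines and every fixed line contains at least two fixed points, so $\mathcal{S}_g$ is a weak generalized quadrangle. Your bijection shows that non-concurrent fixed lines have the same number of fixed points. If every fixed point lies on exactly two fixed lines, then the non-concurrency relation on $\mathcal{L}_g$ is an equivalence relation with exactly two classes (because triangles are forbidden), and $\mathcal{S}_g$ is a grid; the parameters $s_1,s_2$ are the sizes of the two classes minus one, and they may differ. Dually for dual grids. Otherwise some fixed point $P$ lies on at least three fixed lines; then for any fixed point $Q$ not collinear with $P$ one gets $t_Q\ge t_P$ by projecting the pencil at $P$ onto $Q$, and a short connectivity argument (using that in a thick weak GQ any two points are joined by a path through mutually non-collinear points) gives that $t_P$ and $s_\ell$ are constant, yielding case~(4). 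You need to replace the ``$g$-orbit'' language with this intrinsic argument.
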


\begin{corollary}\label{substructure}
With the same notation as in Lemma \ref{subquadrangle}, we have the following properties:
\begin{enumerate}
\item[(i)]If $|\mathcal{P}_{g}|\ge2$, $|\mathcal{L}_{g}|\ge2$ and $\mathcal{S}_{g}$ admits an automorphism group $H$ that is transitive on both points and lines, then the case (4) of Lemma \ref{subquadrangle} holds.
\item[(ii)]If $|\mathcal{P}_{g}|=|\mathcal{L}_{g}|\ge 2$ and $\mathcal{S}_{g}$ admits an automorphism group $H$ that is transitive on its points, then the case (4) of Lemma \ref{subquadrangle} holds.
\end{enumerate}
\end{corollary}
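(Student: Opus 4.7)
The plan is to eliminate all cases~(0)--(3') of Lemma~\ref{subquadrangle} under the respective hypotheses, leaving only case~(4). In both parts, cases~(0), (1), (1') are immediate, since they violate either $|\cP_g|\ge 2$ or $|\cL_g|\ge 2$.

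For part~(i), consider case~(2): the distinguished point $P$ lies on every fixed line, and since $|\cL_g|\ge 2$ and two distinct lines of $\cS$ meet in at most one point, $P$ is uniquely determined as the common intersection of any two fixed lines. Every element of $H$ permutes $\cL_g$, hence fixes this intersection, so $H$ fixes $P$; but $H$ is transitive on $\cP_g$ with $|\cP_g|\ge 2$, a contradiction. Case~(2') follows by point-line duality using transitivity of $H$ on $\cL_g$. In case~(3), the $s_1+s_2+2$ lines of the grid split into two classes of sizes $s_1+1$ and $s_2+1$, distinguished by their line-sizes; since $s_1<s_2$ these classes are preserved by every automorphism of $\cS_g$, so $H$ cannot be transitive on $\cL_g$. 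Case~(3') is the dual, obstructing transitivity on points.

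For part~(ii), case~(2) is disposed of as above, since $|\cL_g|=|\cP_g|\ge 2$ still makes $P$ unique and $H$-fixed. The subtle step is case~(2'): the distinguished line $\ell$ contains all fixed points, and since $|\cP_g|\ge 2$ it is the unique line through any two of them, hence $H$-fixed. Any fixed line $\ell'\ne\ell$ meets $\ell$ in a $g$-fixed point, which lies in $\cP_g\subseteq\ell$. Setting $k=|\cP_g|=|\cL_g|\ge 2$ and writing $f(Q)$ for the number of fixed lines other than $\ell$ through $Q\in\cP_g$, we obtain $\sum_{Q\in\cP_g}f(Q)=k-1$. Since $H$ fixes $\ell$ and is transitive on $\cP_g$, the function $f$ is constant, forcing $k\mid k-1$, which is impossible. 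For cases~(3) and~(3') in part~(ii), the grid (respectively dual grid) has $(s_1+1)(s_2+1)$ points and $s_1+s_2+2$ lines (or vice versa), and $|\cP_g|=|\cL_g|$ forces $s_1 s_2=1$, contradicting $1\le s_1<s_2$.

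The main obstacle is the counting argument for case~(2') in part~(ii): transitivity on $\cP_g$ alone does not yield a contradiction from uniqueness as in the other cases, and one must combine the observation that fixed lines meet $\ell$ at fixed points with the equality $|\cP_g|=|\cL_g|$ to extract the divisibility obstruction $k\mid k-1$.
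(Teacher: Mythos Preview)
Your proof is correct and follows essentially the same strategy as the paper: eliminate cases (0)--(3') of Lemma~\ref{subquadrangle} one by one. The only stylistic difference is in part~(i), where the paper dispatches (2), (2'), (3), (3') uniformly with the single observation that transitivity on points and lines forces every point to lie on the same number of fixed lines and every line to contain the same number of fixed points; you instead identify the specific obstruction in each case (the unique special point or line must be $H$-fixed; the two grid line-classes are $H$-invariant). For part~(ii), your counting argument for case~(2') is exactly the paper's: with $r$ the constant number of fixed lines through each fixed point, one gets $|\cL_g|=(r-1)|\cP_g|+1$, which is your $k\mid k-1$ in disguise.
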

\begin{proof}
For claim (i), both $\mathcal{P}_{g}$ and $\mathcal{L}_{g}$ have at least two elements, so $\mathcal{S}_{g}$ cannot have  type (0), (1) or (1'). By the transitivity assumption, each point in $\mathcal{P}_{g}$ is on the same number of lines in $\mathcal{L}_{g}$ and each line in $\mathcal{L}_{g}$ contains the same number of points in $\mathcal{P}_{g}$. This further excludes the types (2), (2'), (3), (3') and completes the proof of (i).

It remains to prove (ii). As in the proof of (i), $\mathcal{S}_{g}$ cannot have type (0), (1), (1') or  (2). Write $m:=|\mathcal{P}_{g}|=|\mathcal{L}_{g}|\ge 2$. There is a constant $r$ such that there are exactly $r$ lines in $\mathcal{L}_g$ through each point of $\cP_g$ by the transitivity assumption.
If $\mathcal{S}_{g}$ has type (2'), then $m=|\cL_g|=(r-1)m+1$, i.e., $(2-r)m=1$. This does not hold for $m\ge 2$, so $\mathcal{S}_{g}$ does not have type (2').  If $\mathcal{S}_{g}$ has type (3), then we deduce from $|\mathcal{P}_{g}|=|\mathcal{L}_{g}|$ that $(s_{1}+1)(s_{2}+1)=s_{1}+1+s_{2}+1$. It follows that $s_{1}s_{2}=1$, so $s_{1}=s_{2}=1$: a contradiction. Hence $\mathcal{S}_{g}$ does not have type (3). The type (3') is excluded by the same argument.  This completes the proof.
\end{proof}

\begin{lemma}\label{regular}
If $G$ is a finite group acting regularly on the points of a finite thick generalized quadrangle of order $s$, then it is nonabelian.
\end{lemma}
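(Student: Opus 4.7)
The plan is to proceed by contradiction: assume $G$ is abelian, so $|G|=(s+1)(s^2+1)$. Since $G$ acts regularly on $\mathcal P$, no nonidentity element fixes a point, and Lemma \ref{subquadrangle} then forces each $g\in G\setminus\{1\}$ to have type (0) or type (1'). I would split the argument into two cases according to whether type (1') occurs.

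Case I: some $g\neq 1$ is of type (1'). Since $G$ is abelian, $\mathcal L_g$ is $G$-invariant, so the nonempty $G$-invariant union $\bigcup_{\ell\in\mathcal L_g}\ell$ must equal $\mathcal P$ by transitivity; combined with the pairwise nonconcurrency (hence point-disjointness) of lines in $\mathcal L_g$, this forces $|\mathcal L_g|(s+1)=(s+1)(s^2+1)$, so $|\mathcal L_g|=s^2+1$, i.e.\ $\mathcal L_g$ is a spread. The orbit of $\ell\in\mathcal L_g$ lies inside $\mathcal L_g$, giving $|G_\ell|\geq s+1$; together with $|G_\ell|\mid s+1$ (from semiregularity on the $s+1$ points of $\ell$) this yields $|G_\ell|=s+1$, and abelianness makes $G_\ell$ equal to one common subgroup $H_g$ of order $s+1$ for every $\ell\in\mathcal L_g$. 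A double count $\sum_g|\mathcal L_g|=\sum_\ell|G_\ell|$ together with the integrality of the number of line orbits forces exactly $s+1$ distinct ``spread subgroups'' $H_1,\dots,H_{s+1}$ to arise. Any two $H_i\neq H_j$ intersect trivially (an element in $H_i\cap H_j\setminus\{1\}$ would fix lines from two distinct spreads, but every nonidentity element's fixed-line set is a single $G$-orbit of size $s^2+1$), so in the abelian group $G$ the product $H_1H_2$ is a subgroup of order $(s+1)^2$. This must divide $|G|=(s+1)(s^2+1)$, forcing $(s+1)\mid(s^2+1)$; since $s^2+1\equiv 2\pmod{s+1}$, this gives $s+1\mid 2$, contradicting $s\geq 2$.

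Case II: every nonidentity element has type (0); then $G$ acts regularly on $\mathcal L$ as well. Identifying $\mathcal P=\mathcal L=G$ via the two regular actions, the GQ incidence is encoded by a single subset $A\subset G$ with $|A|=s+1$ and $1\in A$, via $P_g\in\ell_h\iff h^{-1}g\in A$. The GQ axioms then force $A$ to be a Sidon subset (all nonidentity quotients distinct) satisfying a further projection condition coming from the unique-collinear-point axiom, and a standard character analysis gives $|\chi(A)|^2\in\{0,2s\}$ for every nontrivial character $\chi$ of $G$. This bi-regular case is the main obstacle: the spread-based contradiction of Case I is unavailable, and the contradiction must instead be extracted from delicate arithmetic of these character sums in cyclotomic integers (for example, ruling out $|\chi(A)|^2=2s$ as an admissible cyclotomic norm for characters of various orders, or analyzing the balance of $A$ across index-$2$ subgroups when $s$ is odd), possibly invoking established nonexistence results for abelian Singer groups of generalized quadrangles.
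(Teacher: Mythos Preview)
The paper does not prove this lemma independently; it simply cites Ghinelli's Theorem~3.1 and the remark following it. Your attempt to argue directly is therefore more ambitious than what the paper does.

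Your Case~I is correct and rather elegant. Once some $g\ne 1$ has type~(1'), the chain of deductions --- $\mathcal L_g$ is $G$-invariant, hence a spread of size $s^2+1$; every line stabilizer has order exactly $s+1$; the $G$-orbits on $\mathcal L$ are precisely $s+1$ spreads with pairwise trivially intersecting stabilizers $H_1,\dots,H_{s+1}$; and then $|H_1H_2|=(s+1)^2\mid |G|$ forces $(s+1)\mid 2$ --- is sound. (A minor gap in exposition: the sentence invoking the double count and ``integrality of the number of line orbits'' is vaguer than needed; the clean argument is that if any line had trivial stabilizer its orbit would be all of $\mathcal L$, contradicting $|G_\ell|=s+1$ for $\ell\in\mathcal L_g$, so \emph{every} line stabilizer has order $s+1$ and there are exactly $s+1$ orbits.)

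The genuine gap is Case~II, and you say so yourself. You correctly derive the character condition $|\chi(A)|^2\in\{0,2s\}$, but then offer only a list of possible strategies (``delicate arithmetic of these character sums'', ``possibly invoking established nonexistence results'') rather than a proof. This bi-regular case --- an abelian group regular on both points and lines of a thick generalized quadrangle of order $s$ --- is precisely the substantive content of Ghinelli's theorem, whose proof uses multiplier-type arguments from difference-set theory. So your independent argument collapses back to the very citation the paper relies on. If you want a self-contained proof you must reproduce Ghinelli's Case~II argument; otherwise, citing Ghinelli once (as the paper does) already covers both cases, and your Case~I analysis, while correct, becomes unnecessary.
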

\begin{proof}
This is a consequence of \cite[Theorem 3.1]{Ghinelli}, cf. the remark following it in \cite{Ghinelli}.
\end{proof}

\begin{lemma}\label{abelian}
Let $\mathcal{S}$ be a finite thick generalized quadrangle of order $(s,t)$. Then there is no abelian group that acts transitively on both the points and the lines of $\mathcal{S}$.
\end{lemma}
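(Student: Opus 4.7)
The plan is to show that transitivity of an abelian automorphism group on either points or lines already forces regularity of that action, use this twice to conclude $s=t$, and then invoke Lemma~\ref{regular} for the contradiction. Suppose, toward a contradiction, that $G$ is an abelian group of automorphisms of $\mathcal{S}$ that is transitive on both $\mathcal{P}$ and $\mathcal{L}$. The first step is to recall the standard observation that an abelian group which acts faithfully and transitively on a set acts regularly: since $G$ is abelian, every point stabilizer is normal in $G$; since the action is transitive, all point stabilizers are conjugate, and hence all coincide with a single normal subgroup $N$; but $N$ fixes every point (by transitivity), so $N$ lies in the kernel of the action on $\mathcal{P}$, which is trivial because $G$ is a group of automorphisms. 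Thus the action on $\mathcal{P}$ is regular. The same argument applied to the line action shows that $G$ also acts regularly on $\mathcal{L}$.

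Consequently $|G|=|\mathcal{P}|=(s+1)(st+1)$ and $|G|=|\mathcal{L}|=(t+1)(st+1)$ by Lemma~\ref{HD}. Comparing these equalities yields $(s+1)(st+1)=(t+1)(st+1)$, and since $st+1>0$ we conclude $s=t$. At this point $G$ is an abelian group acting regularly on the point set of a finite thick generalized quadrangle of order $s$, which is ruled out by Lemma~\ref{regular}. This contradiction completes the proof.

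The argument is short and conceptually clean; there is no real obstacle, the only thing to verify carefully is that the faithfulness of the automorphism action on each of $\mathcal{P}$ and $\mathcal{L}$ is legitimately used (which is built into the definition of an automorphism group of $\mathcal{S}$, since an automorphism is recorded as a compatible pair of permutations of points and of lines).
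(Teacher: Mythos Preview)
Your proof is correct and follows essentially the same approach as the paper: both use that in an abelian transitive action the stabilizer equals the kernel, deduce $|\mathcal{P}|=|\mathcal{L}|$ and hence $s=t$, and then invoke Lemma~\ref{regular}. The only cosmetic difference is that you appeal to faithfulness of the point action (valid since in a thick quadrangle a line is determined by its point-set) to conclude $G$ itself is regular, whereas the paper instead passes to the quotient $G/G_P$; the two are equivalent here.
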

\begin{proof}
Suppose to the contrary that an abelian group $G$ acts transitively on both the points and the lines of $\mathcal{S}$. Let $\cP$, $\cL$ be the sets of points and lines of $\cS$ respectively, and fix a point $P$ and a line $\ell$ of $\mathcal{S}$. Take an element $g\in G_P$. Since $G$ is abelian and transitive on the points, $g$ fixes each point of $\cS$, i.e., $g$ is in the kernel of the action of $G$ on the points of $\cS$. It follows that $g$ also fixes each line of $\cS$. In particular, we have $g\in G_\ell$. It follows that $G_P\le G_\ell$. Similarly, we deduce that $G_\ell\le G_P$. We thus have $G_P=G_\ell$, and so $|\cP|=|\cL|$ by the orbit-stabilizer theorem and the transitivity assumption. We deduce that $s=t$ by Lemma \ref{HD}.

By the arguments in the previous paragraph, $G_P$ is contained in the kernel of the action of $G$ on $\cP$, so they are equal. Therefore, the induced action of $G/G_P$ on $\cP$ is regular. The claim then follows from Lemma \ref{regular}.
\end{proof}

\subsection{Maximal subgroups of the almost simple groups with socle $\mathrm{PSL}(2,q)$ }
By using Dickson's classification of the subgroups of $\mathrm{PSL}(2,q)$ \cite{Dickson}, Giudici \cite{Giudici}  determined all the maximal subgroups of  almost simple groups with socle $\mathrm{PSL}(2,q)$, $q\ge 4$. This is also available in \cite[Table 8.1]{Bray}.
\begin{lemma}\label{maximal}
Let $G$ be an almost simple group with socle $X=\mathrm{PSL}(2,q)$, where $q=p^{f} \geqslant4$ for a prime $p$. Let $M$ be a maximal subgroup of $G$ not containing $X$, and set $M_{0}:=M \cap X$. Then either $(G, M, M_0)$ is as in Table \ref{sporadic}, or $M_{0}$ is a maximal subgroup of $X$ as listed in Table \ref{values}.
\end{lemma}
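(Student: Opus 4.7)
The plan is to derive this classification from Dickson's theorem on subgroups of $\mathrm{PSL}(2,q)$ together with standard correspondence arguments for maximal subgroups of an almost simple group, and to recover both tables of the statement as separate cases.

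First I would invoke Dickson's classification (\cite{Dickson}), which lists every subgroup of $X=\mathrm{PSL}(2,q)$ up to isomorphism: the Borel subgroup $C_p^{\,f}{:}C_{(q-1)/d}$ (stabilizer of a point on the projective line), the two dihedral families $D_{2(q\pm 1)/d}$ (where $d=\gcd(2,q-1)$), subfield subgroups $\mathrm{PSL}(2,q_0)$ and $\mathrm{PGL}(2,q_0)$ for $q=q_0^r$ (with further conditions on $r$), and the three exceptional types $A_4,\,S_4,\,A_5$. From this list one reads off the standard maximal subgroups of $X$, which give the generic rows of Table~\ref{values}.

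Next I would use the general correspondence: for $X\lhd G\le\mathrm{Aut}(X)$, a subgroup $M\le G$ with $M\not\ge X$ is maximal in $G$ if and only if $M=N_G(M_0)$ where $M_0=M\cap X$, the quotient $G/X$ stabilises the $X$-class of $M_0$, and $M_0$ is maximal subject to this property among subgroups of $X$ that are $G$-invariant up to conjugation. In the generic situation $M_0$ is already a maximal subgroup of $X$, giving Table~\ref{values}. Here I would keep careful track of which pairs of $X$-classes are fused by field or diagonal automorphisms (this is exactly the phenomenon which makes the two dihedral classes fuse into one in $\mathrm{PGL}(2,q)$, and similarly for the $A_4$, $S_4$, $A_5$ classes) so that the fused class produces a single maximal subgroup of $G$ rather than two.

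The main obstacle, and the source of Table~\ref{sporadic}, is the list of \emph{novelty} maximal subgroups: cases where $M_0$ is \emph{not} maximal in $X$, yet $N_G(M_0)$ becomes maximal in $G$ because the outer automorphisms obstruct containment in any overgroup of $M_0$ in $X$. These occur only for small $q$ and are driven by the exceptional isomorphisms $\mathrm{PSL}(2,4)\cong\mathrm{PSL}(2,5)\cong A_5$, $\mathrm{PSL}(2,7)\cong\mathrm{PSL}(3,2)$, and $\mathrm{PSL}(2,9)\cong A_6$ (the latter producing the extra graph automorphism that enlarges $\mathrm{Out}(X)$), plus a few low-dimensional coincidences of dihedral and subfield classes. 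I would enumerate these by going through each outer automorphism (diagonal, field, graph) of $X$ for $q\le 11$ and for $q=p^2$ with $p$ odd, checking in each case when a proper overgroup chain $M_0<H<X$ fails to be $G$-invariant; the output is exactly Table~\ref{sporadic}. Since this bookkeeping was carried out by Giudici~\cite{Giudici} and independently appears in \cite[Table~8.1]{Bray}, I would state the lemma as a consequence of those references rather than re-deriving the complete case analysis.
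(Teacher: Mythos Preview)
Your final approach---quoting Giudici \cite{Giudici} (and \cite[Table~8.1]{Bray}) for both tables---is exactly what the paper does, and is correct. The paper's proof consists of nothing more than those citations together with a remark on how the two tables are extracted from \cite[Theorems~1.1,~2.1,~2.2]{Giudici}.

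There is, however, a factual slip in your preliminary sketch that would matter if you actually tried to carry it out. You assert that the novelty maximal subgroups ``occur only for small $q$'' and propose to enumerate them by checking $q\le 11$ and $q=p^2$. This misses the last row of Table~\ref{sporadic}: for $G=\mathrm{PGL}(2,p)$ with $p\equiv\pm 11,\pm 19\pmod{40}$, the subgroup $S_4$ is a novelty maximal (its intersection $A_4$ with $X$ sits inside $A_5\le X$ and is not maximal in $X$), and this is an \emph{infinite} family of primes. Your proposed enumeration range would never find it. The mechanism here has nothing to do with exceptional isomorphisms; it is the fusion of the two $X$-classes of $A_5$ under the diagonal automorphism, which kills the overgroup $A_5$ as a candidate normaliser in $G$. (A minor terminological point: the extra outer automorphism of $\mathrm{PSL}(2,9)\cong A_6$ is not a graph automorphism in the Lie-theoretic sense, since type $A_1$ has none.) Since you ultimately defer to the references, your proof is still valid, but the explanatory paragraph should be corrected or dropped.
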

\begin{proof}
By \cite[Theorem 1.1]{Giudici}, either $M_0$ is a maximal subgroup of $X=\mathrm{PSL}(2,q)$, or $(G,M,M_0)$ is as listed in Table \ref{sporadic}. In the last column of Table \ref{sporadic}, we list the index of $M$ in $G$.  The maximal subgroups of $X$ are enumerated in \cite[Theorems 2.1, 2.2]{Giudici}, which we reproduce here in Table \ref{values}. In the second column of Table \ref{values} we list the index of $M_0$ in $X$ for reference.
\end{proof}

\begin{table}
\begin{center}
\caption{The $(G,M,M_0)$ triples with $M$ maximal in $G$ and $M_0$ not maximal in $X$}\label{sporadic}
\begin{tabular}{cccc}
\hline
 $G$ & $M_0$ & $M=N_G(M_0)$ & $[G: M]$ \\ \hline
 $\mathrm{PGL}(2,7)$ & $\mathrm{D}_{6}$ & $\mathrm{D}_{12}$ & 28 \\
 $\mathrm{PGL}(2,7)$ & $\mathrm{D}_{8}$ & $\mathrm{D}_{16}$ & 21 \\
 $\mathrm{PGL}(2,9)$ & $\mathrm{D}_{10}$& $\mathrm{D}_{20}$ & 36 \\
 $\mathrm{PGL}(2,9)$ & $\mathrm{D}_{8}$ & $\mathrm{D}_{16}$ & 45 \\
 $\mathrm{M}_{10}$   & $\mathrm{D}_{10}$& $\mathrm{C}_{5}\rtimes \mathrm{C}_{4}$ & 36 \\
 $\mathrm{M}_{10}$ & $\mathrm{D}_{8}$ & $\mathrm{C}_{8} \rtimes \mathrm{C}_{2}$ & 45 \\
 $\mathrm{P\Gamma L}(2,9)$ & $\mathrm{D}_{10}$ & $\mathrm{C}_{10} \rtimes \mathrm{C}_{4}$ & 36 \\
 $\mathrm{P\Gamma L}(2,9)$ & $\mathrm{D}_{8}$ & $\mathrm{C}_{8} \cdot\mathrm{Aut}\left(\mathrm{C}_{8}\right)$  & 45 \\
 $\mathrm{PGL}(2,11)$ & $\mathrm{D}_{10}$ & $\mathrm{D}_{20}$ & 66 \\
 $\mathrm{PGL}(2,q)$, $q=p \equiv \pm 11,\pm19\pmod {40}$& $\mathrm{A}_{4}$ & $\mathrm{S}_{4}$ & $\frac{1}{24}q(q^{2}-1)$ \\
\hline
\end{tabular}
\end{center}
\end{table}

\begin{table}
\begin{center}
\caption{Maximal subgroups of $X=\mathrm{PSL}(2,q)$ and their indices in $X$}\label{values}
\begin{tabular}{ccccc}\hline
Case & $M_{0}$ & $[X:\,M_{0}]$ & Condition \\\hline
1& $\mathrm{C}_{p}^{f} \rtimes \mathrm{C}_{\frac{q-1}{\gcd(2, q-1)}}$ & $q+1$ & \\
2& $\mathrm{PGL}(2,q_{0})$ & $\frac{q_{0}(q_{0}^{2}+1)}{2}$ & $q=q_{0}^{2}$ odd\\
3& $\mathrm{A}_{5}$ & $\frac{q(q^{2}-1)}{120}$ & $q=p \equiv \pm 1\pmod {10}$ or $q=p^{2}$ and $p \equiv \pm 3\pmod {10}$ \\
4& $\mathrm{A}_{4}$ & $\frac{p(p^{2}-1)}{24}$ & $q=p \equiv \pm 3\pmod 8$ and $p \not \equiv \pm 1\pmod {10}$ \\
5& $\mathrm{S}_{4}$ & $\frac{p(p^{2}-1)}{48}$ & $q=p \equiv \pm 1\pmod 8$ \\
6& $\mathrm{PSL}(2,q_{0})$ & $\frac{q_{0}^{r-1}(q_{0}^{2r}-1)}{q_{0}^{2}-1}$ & $q=q_{0}^{r}$ odd and $r$ is an odd prime \\
7& $\mathrm{PGL}(2,q_{0})$ & $\frac{q_{0}^{r-1}(q_{0}^{2r}-1)}{q_{0}^{2}-1}$ & $q=2^{f}=q_{0}^{r}$, where $r$ is prime and $q_{0} \neq 2$ \\
8& $\mathrm{D}_{2(q-1) / \gcd(2, q-1)}$ & $\frac{q(q+1)}{2}$ & $q\ne 5,7,9,11$\\
9& $\mathrm{D}_{2(q+1) / \gcd(2, q-1)}$ & $\frac{q(q-1)}{2}$ &  $q\ne 7,9$\\
\hline
\end{tabular}
\end{center}
\end{table}

We shall also need the list of all subgroups of $\mathrm{PGL}(2,q)$ with $q$ odd, cf. \cite{Dickson, Huppert, Cameron}.
\begin{lemma}\label{subgroupPGL}(\cite[Theorem 2]{Cameron})
The subgroups of $\mathrm{PGL}(2,q)$ with $q=p^{f}\ge 5$ odd are as follows:
\begin{enumerate}
\item[(i)] $\mathrm{C}_{2}$;
\item[(ii)] $\mathrm{C}_{d}$, where $d \mid q \pm 1$ and $d>2$;
\item[(iii)]$\mathrm{D}_{4}$;
\item[(iv)] $\mathrm{D}_{2d}$, where $d \mid \frac{q \pm 1}{2}$ and $d>2$;
\item[(v)] $\mathrm{D}_{2 d}$, where $(q \pm 1) / d$ is an odd integer and $d>2$;
\item[(vi)]$\mathrm{A}_{4}$, $\mathrm{S}_{4}$ and $\mathrm{A}_{5}$ when $q \equiv \pm 1(\bmod 10)$;
\item[(vii)] $\mathrm{PSL}\left(2, p^{m}\right)$, where $m \mid f$;
\item[(viii)] $\mathrm{PGL}\left(2, p^{m}\right)$, where $m \mid f$;
\item[(ix)] an elementary abelian group of order $p^{m}$, where $m \leq f$;
\item[(x)] a semidirect product of an elementary abelian group of order $p^{m}$ and $\mathrm{C}_{d}$, where $m \leq f$ and $d \mid \gcd(q-1,p^m-1)$.
\end{enumerate}
\end{lemma}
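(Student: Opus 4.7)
The plan is to derive the classification by combining Dickson's theorem on subgroups of $\mathrm{PSL}(2,q)$ with a case analysis of index-$2$ extensions inside $\mathrm{PGL}(2,q)$, exploiting the fact that $\mathrm{PSL}(2,q)$ has index $2$ in $\mathrm{PGL}(2,q)$ when $q$ is odd. Given any subgroup $H\le \mathrm{PGL}(2,q)$, I would set $K:=H\cap \mathrm{PSL}(2,q)$ so that $[H:K]\in\{1,2\}$. If $H=K$, then Dickson's theorem directly produces one of the listed types: cyclic of order dividing $(q\pm 1)/2$, dihedral $\mathrm{D}_{2d}$ with $d\mid (q\pm 1)/2$, isomorphic to $\mathrm{A}_4$, $\mathrm{S}_4$, or $\mathrm{A}_5$ under the congruence conditions of (vi), subfield $\mathrm{PSL}(2,p^m)$ with $m\mid f$, or a Borel-type subgroup $p^m\rtimes C_d$ with $d\mid \gcd(q-1,p^m-1)$ subject to the appropriate parity constraint. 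Each possibility is immediately matched with one of the items (i)--(x).

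Next I would treat the case $[H:K]=2$ by writing $H=K\langle t\rangle$ with $t$ in the outer coset of $\mathrm{PSL}(2,q)$, and walk through each possibility for $K$. If $K$ is cyclic of order $d$, then $H$ is cyclic of order $2d$ (contributing further entries of (ii) with $d\mid q\pm 1$) or dihedral of order $2d$, where the two families are distinguished by whether $t$ centralizes or inverts $K$; comparing $d$ to the orders $q-1$ and $q+1$ of the split and non-split tori yields items (iii), (iv), and (v). If $K$ is dihedral of order $2d$, then $H$ is dihedral of order $4d$, enlarging the dihedral families. If $K\cong \mathrm{PSL}(2,p^m)$ with $m\mid f$, then $H\cong \mathrm{PGL}(2,p^m)$, giving (viii). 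If $K\cong \mathrm{A}_4$, then $H\cong \mathrm{S}_4$, absorbed by (vi); no extension occurs for $K\cong\mathrm{A}_5$ because $\mathrm{S}_5$ does not embed in $\mathrm{PGL}(2,q)$. Finally, Borel-type extensions keep the shape (x), with the cyclic part possibly enlarged by a factor of $2$.

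The main obstacle will be establishing the precise divisibility and parity conditions, in particular separating the two dihedral families (iv) and (v) according to the parity of $(q\pm 1)/d$. The natural tool is the geometric action of $\mathrm{PGL}(2,q)$ on the projective line $\mathrm{PG}(1,q)$: split semisimple elements fix two $\mathbb{F}_q$-rational points and lie in a split torus of order $q-1$; non-split semisimple elements fix a Galois-conjugate pair in $\mathrm{PG}(1,q^2)$ and lie in a non-split torus of order $q+1$; and unipotent elements fix exactly one $\mathbb{F}_q$-rational point. The normalizers of the two tori are dihedral of orders $2(q-1)$ and $2(q+1)$ respectively, and every cyclic or dihedral subgroup of $\mathrm{PGL}(2,q)$ sits inside one of these normalizers. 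The parity condition in (v) then reflects whether the outer involution of $H$ lies in $\mathrm{PSL}(2,q)$ or not, which is determined by its determinant class modulo squares in $\mathbb{F}_q^\times$. A standard conjugacy and fusion analysis then handles the exceptional subgroups $\mathrm{A}_4,\mathrm{S}_4,\mathrm{A}_5$, completing the verification and recovering exactly the list (i)--(x).
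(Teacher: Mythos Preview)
The paper gives no proof of this lemma: it is stated purely as a citation of \cite[Theorem~2]{Cameron}, with the remark that it ``can be deduced from \cite[pp.~191--193]{Huppert} and \cite[Section~3.1--3.2]{Burness}.'' So there is nothing in the paper to compare your argument against beyond the bare attribution.

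Your outline is the standard route and is essentially how the cited references obtain the list: start from Dickson's classification of subgroups of $\mathrm{PSL}(2,q)$, use $[\mathrm{PGL}(2,q):\mathrm{PSL}(2,q)]=2$ to reduce to the dichotomy $H=K$ or $[H:K]=2$ with $K=H\cap\mathrm{PSL}(2,q)$, and then run through the index-$2$ extensions case by case, tracking torus normalizers to pin down the dihedral divisibility and parity conditions. One small imprecision: your assertion that ``$\mathrm{S}_5$ does not embed in $\mathrm{PGL}(2,q)$'' is not literally true, since $\mathrm{S}_5\cong\mathrm{PGL}(2,5)$ embeds in $\mathrm{PGL}(2,5^f)$; the correct statement is that when $p=5$ any such $\mathrm{S}_5$ already appears under item~(viii), and when $p\neq 5$ one checks that the outer coset of $\mathrm{PSL}(2,q)$ contains no element normalising a given $\mathrm{A}_5$ to an $\mathrm{S}_5$. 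This does not affect the final list. With that caveat, your plan is sound and matches the approach underlying the cited result.
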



\begin{lemma}\label{fixedpoints}
Let $G$ be a finite transitive permutation group on a set $\Omega$, and choose $\alpha\in\Omega$. For $g\in G$, let $g^G$ be its conjugacy class in $G$ and let $\pi(g)$ be its number of fixed points on $\Omega$. We have
\[\pi(g)=\frac{|\Omega|\cdot|g^G\cap G_\alpha|}{|g^G|}.\]
\end{lemma}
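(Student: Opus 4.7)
The plan is to use a standard double-counting argument on the set
\[
\Sigma=\{(h,\beta)\in g^G\times\Omega: h\cdot\beta=\beta\}.
\]
Counting $|\Sigma|$ by first summing over $h\in g^G$: since conjugate permutations have the same cycle structure, every element of $g^G$ has exactly $\pi(g)$ fixed points on $\Omega$, giving $|\Sigma|=|g^G|\cdot\pi(g)$.

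Counting $|\Sigma|$ by first summing over $\beta\in\Omega$: the inner sum is $|g^G\cap G_\beta|$. By transitivity, for each $\beta\in\Omega$ there exists $x\in G$ with $\beta=x\alpha$, and hence $G_\beta=xG_\alpha x^{-1}$. Since $g^G$ is closed under conjugation, the map $y\mapsto xyx^{-1}$ restricts to a bijection from $g^G\cap G_\alpha$ onto $g^G\cap G_\beta$, so $|g^G\cap G_\beta|=|g^G\cap G_\alpha|$ is independent of $\beta$. This gives $|\Sigma|=|\Omega|\cdot|g^G\cap G_\alpha|$. Equating the two expressions and dividing by $|g^G|$ yields the stated formula. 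There is no real obstacle here; the only points to be careful about are that fixed-point counts are class functions (invariance under conjugation in the ambient symmetric group, hence in $G$) and that transitivity makes $|g^G\cap G_\beta|$ independent of $\beta$.
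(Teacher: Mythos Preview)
Your proof is correct and takes essentially the same approach as the paper: the paper's proof simply says the claim follows from double counting the set of pairs $(x,h)\in\Omega\times g^G$ such that $h$ fixes $x$, citing \cite[Lemma 2.5]{Liebeck}. You have written out the details of this double count explicitly, including the justification that $|g^G\cap G_\beta|$ is independent of $\beta$ by transitivity.
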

\begin{proof}
The claim follows from double counting the set of pairs $(x,h)$ in $\Omega\times g^G$ such that $h$ fixes $x$, cf. \cite[Lemma 2.5]{Liebeck}. For a character theory version, please refer to \cite[Theorem 5.18]{char}.
\end{proof}

The following results are well-known and can be deduced from \cite[pp.191-193]{Huppert} and \cite[Section 3.1-3.2]{Burness}. We present some more details here for later reference. Let $I_2$ be the identity matrix of order $2$. We write $\textup{diag}(a,b)$ for the $2\times 2$ diagonal matrix whose diagonal entries are $(a,b)$.
\begin{lemma}\label{conjugacy}
Suppose that $X=\mathrm{PSL}(2,q)$, $q=p^f\ge 4$ with $p$ prime. Then $X$ has a single conjugacy class $C$ of involutions and
\[
  \left|C\right|= \begin{cases}q^{2}-1, &\text{ if $q$ is even}, \\ \frac{1}{2}q(q+\epsilon),  &\textup{ if }q\equiv\epsilon\pmod{4}\textup{ with }\epsilon\in\{\pm 1\}.\end{cases}
\]
Moreover, if $g$ is an involution in $X$, then
\[
C_X(g)=\begin{cases}\mathrm{D}_{q-1},&\textup{ if } q\equiv 1\pmod{4},\\
\mathrm{D}_{q+1},&\textup{ if } q\equiv 3\pmod{4},\\
\mathrm{C}_2^f,&\textup{ if $q=2^f$, $f\ge 2$.}\end{cases}
\]
\end{lemma}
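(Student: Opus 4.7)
\bigskip

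\noindent\textbf{Proof proposal.} The plan is to split into the characteristic 2 case and the characteristic not 2 case, work inside $\mathrm{SL}(2,q)$, and descend to $X=\mathrm{PSL}(2,q)$ at the end.

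For $q=2^f$ we have $Z(\mathrm{SL}(2,q))=1$, so $X=\mathrm{SL}(2,q)$. An element $g\ne I$ with $g^2=I$ satisfies $(g-I)^2=g^2+I=0$ in characteristic $2$, so $g$ is a non-trivial unipotent element, conjugate to $u:=\bigl(\begin{smallmatrix}1&1\\0&1\end{smallmatrix}\bigr)$; thus the involutions form a single $X$-class $C$. A direct computation shows that the centralizer of $u$ in $\mathrm{GL}(2,q)$ consists of the matrices $aI+b(u-I)$ with $a\in\mathbb{F}_q^*$, $b\in\mathbb{F}_q$, and imposing $\det=1$ (equivalently $a^2=1$, i.e.\ $a=1$) forces the centralizer in $X$ to be the additive group $\{I+bE_{12}:b\in\mathbb{F}_q\}\cong\mathrm{C}_2^f$. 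Then $|C|=|X|/|C_X(u)|=q(q^2-1)/q=q^2-1$.

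For $q$ odd, an element of $X$ is an involution exactly when it lifts to some $\tilde g\in\mathrm{SL}(2,q)$ with $\tilde g^2=\pm I$ and $\tilde g\notin\{\pm I\}$. The possibility $\tilde g^2=I$ with $\tilde g\ne\pm I$ is ruled out because such a $\tilde g$ would be diagonalisable with eigenvalues $\pm 1$ and determinant $1$, forcing $\tilde g=\pm I$. Hence $\tilde g^2=-I$, so the characteristic polynomial of $\tilde g$ is $X^2+1$. When $q\equiv 1\pmod 4$, $-1$ is a square, so every such $\tilde g$ is conjugate in $\mathrm{GL}(2,q)$ to $d:=\mathrm{diag}(i,-i)$; when $q\equiv 3\pmod 4$, $X^2+1$ is irreducible and every such $\tilde g$ is conjugate in $\mathrm{GL}(2,q)$ to the companion matrix $w:=\bigl(\begin{smallmatrix}0&-1\\1&0\end{smallmatrix}\bigr)$. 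In both cases the $\mathrm{GL}(2,q)$-centralizer is a maximal torus ($\mathbb{F}_q^*\times\mathbb{F}_q^*$ in the split case, $\mathbb{F}_{q^2}^*$ in the nonsplit case), and in both cases $\det$ is surjective on the centralizer, so $\mathrm{GL}(2,q)=\mathrm{SL}(2,q)\cdot C_{\mathrm{GL}(2,q)}(\tilde g)$; therefore the $\mathrm{GL}$-class does not split upon restriction to $\mathrm{SL}(2,q)$. Passing from $\mathrm{SL}(2,q)$ to $X$ merges $\tilde g$ with $-\tilde g$, but these are already $\mathrm{SL}$-conjugate (both have characteristic polynomial $X^2+1$), so the involutions still form a single class $C$ in $X$.

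To pin down the centralizer in $X$, I use $C_X(g)=\bigl\{x\in\mathrm{SL}(2,q):x\tilde g x^{-1}\in\{\pm\tilde g\}\bigr\}/\{\pm I\}$. The condition $x\tilde g x^{-1}=\pm\tilde g$ says $x$ normalises the torus $T$ containing $\tilde g$, because $\tilde g^{-1}=-\tilde g$. Thus the preimage equals $N_{\mathrm{SL}(2,q)}(T)$, which has order $2(q-1)$ in the split case and $2(q+1)$ in the nonsplit case, and it is dihedral in each case because the Weyl element inverts the torus. Quotienting by $\{\pm I\}$ yields $C_X(g)\cong\mathrm{D}_{q-1}$ or $\mathrm{D}_{q+1}$ respectively, and the orbit–stabiliser theorem gives $|C|=|X|/|C_X(g)|=\tfrac12 q(q+\epsilon)$ with $\epsilon\equiv q\pmod 4$. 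The main delicate point is bookkeeping at the level $\mathrm{SL}\to\mathrm{PSL}$: confirming that no splitting of $\mathrm{GL}$-classes occurs in $\mathrm{SL}(2,q)$ and that the Weyl element has to be included in the $\mathrm{PSL}$-centralizer (but not in the $\mathrm{SL}$-centralizer), which is what accounts for the factor of $2$ turning the cyclic torus into the dihedral group stated in the lemma.
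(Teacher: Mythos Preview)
Your argument is correct and takes a more structural route than the paper's. Where the paper works by explicit matrix calculation---diagonalising the representative $A=\bigl(\begin{smallmatrix}0&1\\-1&0\end{smallmatrix}\bigr)$ over $\mathbb{F}_q[i]$, solving $BD=\lambda DB$ by hand, and in the $q\equiv 3\pmod 4$ case identifying the centraliser via the field structure on $\{xI+yA:x,y\in\mathbb{F}_q\}$---you instead recognise the $\mathrm{GL}$-centraliser of a lift $\tilde g$ as a maximal torus, show the $\mathrm{GL}$-class does not split in $\mathrm{SL}$ via surjectivity of $\det$ on that torus, and then obtain $C_X(g)$ as the image of the torus normaliser, reading off the dihedral structure from the Weyl action. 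Your approach is cleaner and conceptually uniform across the split and nonsplit cases; the paper's hands-on computation has the compensating advantage of producing explicit generators, which are used later (the proof of Lemma~\ref{regulargroup} appeals to the proof of Lemma~\ref{conjugacy} to see that the index-$2$ cyclic subgroup $K$ of $C_X(g)$ actually contains $g$).

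One small slip worth correcting: $N_{\mathrm{SL}(2,q)}(T)$ is not itself dihedral when $q$ is odd. Every element $wt_0$ in the non-trivial Weyl coset satisfies $(wt_0)^2=w^2\cdot(w^{-1}t_0w)t_0=-I\cdot t_0^{-1}t_0=-I$, so the only involution in $\mathrm{SL}(2,q)$ is $-I$ and the normaliser is dicyclic of order $2(q\mp 1)$. This does not affect your conclusion: it is only the quotient by $\{\pm I\}$ that must be dihedral, and your reasoning (the Weyl element inverts the cyclic torus, and its image has order $2$ since $w^2=-I$) establishes exactly that.
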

\begin{proof}
The first claim is verified in an elementary way in the proof of \cite[Lemma 2.9]{Liu}, see also \cite[Section 4]{InvEven}, \cite{InvOdd} and \cite[Section 3.2]{Burness}.
Here we give an elementary proof of the claim on $C_X(g)$.  Set $A:=\begin{pmatrix}0&1\\-1&0\end{pmatrix}$, so that $A^2=-I_2$.

First suppose that $q$ is odd. Choose an element $i\in\F_{q^2}$ such that $i^2=-1$. If $q\equiv 1\pmod{4}$, then $i$ is in $\F_q^*$. Set $Q=\begin{pmatrix}1&i\\i&1\end{pmatrix}$. Then $Q^{-1}AQ=D$, where $D=\textup{diag}(i,-i)$. Here the entries of $Q$ lie in the field $\F=\F_q[i]$. It is elementary to show that an invertible matrix $B:=\begin{pmatrix}a&b\\c&d\end{pmatrix}$ with entries in $\F$ such that $BD=\lambda DB$ for some constant $\lambda$ if and only if one of the following occurs: (1) $\lambda=1$, $b=c=0$, (2) $\lambda=-1$, $a=d=0$. Take an element $\lambda\in\F$ such that $\lambda^{q+1}=-1$; in particular, we choose $\lambda=i$ if $q\equiv 1\pmod{4}$. Then $N_1=Q\begin{pmatrix}0&\lambda\\-\lambda^{-1}&0\end{pmatrix}Q^{-1}$ lies in $\SL_2(q)$ and $AN_1=-N_1A$ upon direct check.

If $q\equiv 1\pmod{4}$, then up to conjugacy we choose $g$ to be the quotient image of $D$ in $X$. Its centralizer in $X$ is generated by the quotient image of of $\begin{pmatrix}0&i\\i&0\end{pmatrix}$ and those of $\textup{diag}(a,a^{-1})$, $a\in\F_q^*$. This establishes the claim for $q\equiv 1\pmod{4}$.

If $q\equiv 3\pmod{4}$, then $i^q=-i$, and $1,i$ form a basis of $\F_{q^2}$ over $\F_q$. Up to conjugacy we choose $g$ to be the quotient image of $A$ in $X$. Take $h\in C_X(g)$ and let $N$ be a preimage in $\textrm{SL}(2,q)$. Set $B=Q^{-1}NQ$, so that $BD=\lambda DB$ for some constant $\lambda$. If $B$ is diagonal, then $B=\alpha I_2+\beta D$ for some $\alpha,\beta\in\F_{q^2}$ and correspondingly $N=\alpha I_2+\beta A$. Since $N$ has all entries in $\F_q$, we deduce that $\alpha,\beta$ are in $\F_q$.
We have $\det(N)=1$, so $\alpha^2+\beta^2=1$. The set $T:=\{xI_2+yA:\,x,y\in\F_q\}$ forms a field $E$ of order $q^2$ under matrix addition and multiplication, and such $N$'s form a cyclic subgroup $K$ of order $q+1$ of the multiplicative group of $E$. By the second paragraph of this proof, we see that $C_X(g)$ is a dihedral group of order $q+1$ generated by the quotient images of $N_1$ and $K$.

Next suppose that $q$ is even, and up to conjugacy we choose $g$ to be the quotient image of $A$ in $X$. Let $N:=\begin{pmatrix}a&b\\c&d\end{pmatrix}$ be an element of $\textrm{SL}(2,q)$ such that $AN=\lambda NA$ for some constant $\lambda$. By taking determinants on both sides, we deduce that $\lambda=1$. Upon expansion we deduce from $AN=NA$ that $a=d$, $b=c$, i.e., $N$ is in the set $T:=\{xI_2+yA:\,x,y\in\F_q\}$. We deduce that $a+b=1$ from $\det(N)=1$. It is routine to check that such $N$'s form an elementary abelian $2$-group of order $q$. This completes the proof.
\end{proof}

\begin{lemma}\label{conjugacy2}
Suppose that $X=\mathrm{PSL}(2,q)$, $q=p^f$ with $p>5$ prime. Then $X$ has a single conjugacy class $C$ of elements of order $3$ in $X$ and
\begin{align*}
|C|= \begin{cases}q(q-1), &\text{ if }\ q \equiv-1\pmod 3; \\ q(q+1), &\text{ if }\ q \equiv 1\pmod 3.\end{cases}
\end{align*}
Moreover, if $g$ is an element of order $3$ in $X$, then
\[
C_X(g)=\begin{cases}\mathrm{C}_{(q-1)/2},&\textup{ if } q\equiv 1\pmod{3},\\
\mathrm{C}_{(q+1)/2},&\textup{ if } q\equiv 2\pmod{3}.\end{cases}
\]

\end{lemma}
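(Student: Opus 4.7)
The plan is to mirror the proof of Lemma~\ref{conjugacy}, adapting the computations from involutions to elements of order $3$. Since $p>5$, we have $p\ne 2,3$, so $q$ is odd and $3\mid q^2-1$; hence $X$ does contain elements of order $3$. The three steps are: (i) show there is a single $X$-conjugacy class of such elements; (ii) compute $C_X(g)$ by working with a suitable lift to $\SL(2,q)$; and (iii) derive $|C|$ from the orbit-stabilizer theorem.

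For (i), let $g\in X$ have order $3$ and lift it to $\tilde g\in\SL(2,q)$; replacing $\tilde g$ by $-\tilde g$ if necessary, I may assume $\tilde g$ itself has order $3$, because if $\tilde g$ has order $6$ then $-\tilde g$ has order $3$. Then $\tilde g$ is nonscalar and its minimal polynomial equals its characteristic polynomial $x^2+x+1$, so $\tilde g$ is $\GL(2,q)$-conjugate to the companion matrix of $x^2+x+1$. Because the centralizer of $\tilde g$ in $\GL(2,q)$ contains the scalar matrices, the image of $\det$ on this centralizer is all of $\F_q^*$; consequently the $\GL$- and $\SL$-conjugacy classes of $\tilde g$ coincide, and all elements of order $3$ in $X$ are $X$-conjugate.

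For (ii), observe that for any $h\in C_X(g)$ and any lift $\tilde h\in\SL(2,q)$ we have $\tilde h\tilde g\tilde h^{-1}=\pm\tilde g$; since $\tilde g$ has order $3$ and $-\tilde g$ has order $6$, the sign must be $+$, so $C_X(g)\cong C_{\SL(2,q)}(\tilde g)/\{\pm I_2\}$. If $q\equiv 1\pmod 3$, I fix a primitive cube root of unity $\omega\in\F_q^*$ and conjugate so that $\tilde g=\textup{diag}(\omega,\omega^{-1})$; as the eigenvalues are distinct, the $\SL$-centralizer is the split torus $\{\textup{diag}(a,a^{-1}):a\in\F_q^*\}\cong\mathrm{C}_{q-1}$. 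If $q\equiv 2\pmod 3$, the polynomial $x^2+x+1$ is irreducible over $\F_q$, so $E:=\F_q[\tilde g]$ is a field of order $q^2$, the $\GL(2,q)$-centralizer of $\tilde g$ equals $E^*\cong\F_{q^2}^*$, and the restriction of $\det$ to $E^*$ is the field norm $\F_{q^2}^*\to\F_q^*$, whose kernel is cyclic of order $q+1$. Quotienting each case by $\{\pm I_2\}$ (which lies in the centralizer, since $-1$ has norm $(-1)^{q+1}=1$) yields the stated $C_X(g)$, and step (iii) follows at once from $|X|=q(q^2-1)/2$.

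There is no serious obstacle. The main subtlety is the bookkeeping between $\SL(2,q)$ and $\PSL(2,q)$: one must start with a lift of order $3$ (not $6$) and use the order argument to rule out the minus sign in $\tilde h\tilde g\tilde h^{-1}=\pm\tilde g$, since otherwise the index after quotienting by $\{\pm I_2\}$ would be off by a factor of $2$. Identifying the field structure on $\F_q[\tilde g]$ with $\F_{q^2}$ in the non-split case is standard.
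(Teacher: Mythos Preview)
Your approach is sound and, for the centralizer computation in step (ii), is exactly the elementary argument the paper says ``can be proved by the same elementary approach as in the proof of Lemma~\ref{conjugacy}'' but then omits. For the uniqueness of the conjugacy class, the paper instead quotes \cite[Theorem~3.1.12 and Table~B.3]{Burness} rather than arguing directly, so your step (i) is more self-contained than the paper's treatment.

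There is, however, a small gap in your justification in step (i). You write that ``because the centralizer of $\tilde g$ in $\GL(2,q)$ contains the scalar matrices, the image of $\det$ on this centralizer is all of $\F_q^*$.'' Containing the scalars only guarantees that the image of $\det$ contains the \emph{squares} in $\F_q^*$, since $\det(aI_2)=a^2$. What you actually need is that the full centralizer $C_{\GL(2,q)}(\tilde g)=\F_q[\tilde g]^*$ (which holds because $\tilde g$ is nonderogatory) has $\det$ surjective onto $\F_q^*$: in the split case this is the diagonal torus with $\det(\textup{diag}(a,b))=ab$, and in the non-split case it is $\F_{q^2}^*$ with $\det$ equal to the norm. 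You already identify these structures correctly in step (ii), so the fix is simply to invoke that description earlier. With this correction, your argument goes through and yields the same conclusions as the paper.
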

\begin{proof}
Let $g$ be an element of order $3$ in $\PSL(2,q)$. Then $g$ is semisimple and its conjugacy class in $\PSL(2,q)$ is also its conjugacy class in $\PGL(2,q)$ by \cite[Theorem 3.1.12]{Burness}.  We have $\Phi(3,q)=1$ or $2$ according as $q\pmod{3}$ is $1$ or $2$, where $\Phi(3,q)=\min\{i\in\mathbb{N}:\,3\textup{ divides }q^i-1\}$. The number of conjugacy classes as well as the orders of their respective centralizers in $\PGL(2,q)$ can be read off from \cite[Table B.3]{Burness}, so that the first claim follows. The claim on $C_X(g)$ can be proved by the same elementary approach as in the proof of Lemma \ref{conjugacy} and we omit the details.
\end{proof}

\section{Generalized quadrangles with an automorphism group acting transitively on both points and lines}\label{model}

Let $\cS$ be a finite thick generalized quadrangle of order $(s,t)$ with point set $\cP$ and line set $\cL$, and suppose that it admits an automorphism group $G$ that is transitive on both points and lines. Fix a point $\alpha\in\cP$ and a line $\ell\in\cL$.
\begin{lemma}\label{lem_boundM1}
With notation as above,  we have $\frac{|G_{\alpha}|^{4/3}}{|G|^{1/3}}<|G_\ell|<|G_{\alpha}|^{3/4}\cdot|G|^{1/4}$.
\end{lemma}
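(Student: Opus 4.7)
The plan is to use the orbit--stabilizer theorem to rewrite the two quantities $|\cP|$ and $|\cL|$ in terms of the group orders, and then feed them into Lemma \ref{HD}(iii).

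First I would observe that, since $G$ is transitive on both $\cP$ and $\cL$, the orbit--stabilizer theorem gives
\[
|\cP|=\frac{|G|}{|G_\alpha|},\qquad |\cL|=\frac{|G|}{|G_\ell|},
\]
so that
\[
\frac{t+1}{s+1}=\frac{|\cL|}{|\cP|}=\frac{|G_\alpha|}{|G_\ell|},\qquad
\frac{s+1}{t+1}=\frac{|G_\ell|}{|G_\alpha|}.
\]

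Next I would substitute these into the two inequalities of Lemma \ref{HD}(iii). The inequality $\left(\frac{t+1}{s+1}\right)^{3}<|\cP|$ becomes $|G_\alpha|^{3}/|G_\ell|^{3}<|G|/|G_\alpha|$, i.e.\ $|G_\alpha|^{4}<|G_\ell|^{3}\,|G|$, which rearranges to the desired lower bound $|G_\ell|>|G_\alpha|^{4/3}/|G|^{1/3}$. Symmetrically, the inequality $\left(\frac{s+1}{t+1}\right)^{3}<|\cL|$ becomes $|G_\ell|^{3}/|G_\alpha|^{3}<|G|/|G_\ell|$, i.e.\ $|G_\ell|^{4}<|G_\alpha|^{3}\,|G|$, giving the upper bound $|G_\ell|<|G_\alpha|^{3/4}\,|G|^{1/4}$.

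There is essentially no obstacle here: the statement is a direct reformulation of Higman's inequality via point--line duality, combined with orbit--stabilizer. The only small thing to double check is that the strict inequalities survive (they do, since Lemma \ref{HD}(iii) is strict and the manipulation is purely algebraic).
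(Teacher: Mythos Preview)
Your proof is correct and follows essentially the same approach as the paper: both use the orbit--stabilizer theorem to express $|\cP|$, $|\cL|$, and $\frac{t+1}{s+1}$ in terms of $|G|$, $|G_\alpha|$, $|G_\ell|$, and then substitute directly into the two inequalities of Lemma~\ref{HD}(iii).
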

\begin{proof}
Since $G$ is transitive on both $\cP$ and $\mathcal{L}$, we have $|\cP|=\frac{|G|}{|G_\alpha|}$, $|\cL|=\frac{|G|}{|G_\ell|}$. It follows from Lemma \ref{HD} that  $\frac{t+1}{s+1}=\frac{|G_{\alpha}|}{|G_\ell|}$. By Lemma \ref{HD} (iii), we have
\[
\left(\frac{|G_{\alpha}|}{|G_l|}\right)^3<\frac{|G|}{|G_{\alpha}|},\quad \left(\frac{|G_{\ell}|}{|G_\alpha|}\right)^3<\frac{|G|}{|G_{\ell}|},
\]
from which we deduce the desired inequalities. This completes the proof.
\end{proof}

We define a set $D$ as follows:
\[
D=\{g\in G:\,\alpha^g \textup{ is incident with } \ell\}.
\]
The points on the line $\ell$ are $\alpha^g$ for $g\in D$, and the lines through the point $\alpha$ are $\ell^{g^{-1}}$ for $g\in D$. We thus have $|D|=(s+1)|G_\alpha|=(t+1)|G_\ell|$. The set $D$ is a union of $(G_\alpha,G_l)$-double cosets in $G$, so we have a  decomposition $D=\bigcup\limits_{i=1}^{d} G_{\alpha}h_{i}G_{l}$, where the double cosets $G_{\alpha}h_{i}G_{l}$, $1\le i\le d$, are pairwise distinct. It follows that
\begin{align*}
s+1&=\frac{|D|}{|G_{\alpha}|}=\sum\limits_{i=1}^{d} \frac{|G_{l}|}{|G_{l}\cap h_{i}^{-1}G_{\alpha}h_{i}|},\\
t+1&=\frac{|D|}{|G_{\ell}|}=\sum\limits_{i=1}^{d} \frac{|G_{\alpha}|}{|G_{\alpha}\cap h_{i}G_{l}h_{i}^{-1}|}.
\end{align*}
For $g,h\in G$, the point $\alpha^g$ is incident with the line $\ell^h$ if and only if  $\alpha^{gh^{-1}}$ is incident with $\ell$, i.e., $gh^{-1}\in D$.

\begin{lemma}\label{transitiveaction}
With notation as above, let $g$ be a nonidentity element in $G$. Let $\cP_g$ be the set of fixed points of $g$, and suppose that $\alpha$ is in $\cP_g$. If $[C_{G}(g):\,C_{G}(g)\cap G_{\alpha}]=|\cP_g|$, then $C_{G}(g)$ acts transitively on $\cP_{g}$.
\end{lemma}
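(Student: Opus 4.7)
The plan is essentially a one-step orbit-stabilizer argument, so the proof will be short. First I would verify that $C_G(g)$ actually acts on the set $\cP_g$. This is the only substantive point: if $h \in C_G(g)$ and $P \in \cP_g$, then
\[
(P^h)^g = P^{hg} = P^{gh} = (P^g)^h = P^h,
\]
so $P^h \in \cP_g$. Hence restriction induces a well-defined action of $C_G(g)$ on $\cP_g$.

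Next I would apply the orbit-stabilizer theorem inside this induced action. The stabilizer of $\alpha$ under the $C_G(g)$-action on $\cP_g$ is clearly $C_G(g) \cap G_\alpha$, so the orbit $\alpha^{C_G(g)}$ has cardinality
\[
\bigl|\alpha^{C_G(g)}\bigr| = [C_G(g):\,C_G(g)\cap G_\alpha].
\]
By the hypothesis of the lemma, this index equals $|\cP_g|$. Since $\alpha^{C_G(g)} \subseteq \cP_g$ and the two sets have the same finite cardinality, they must coincide. Therefore every point of $\cP_g$ lies in the $C_G(g)$-orbit of $\alpha$, i.e., $C_G(g)$ acts transitively on $\cP_g$.

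There is no real obstacle here; the only thing that needs care is the first step that centralizing elements of $g$ permute its fixed points, and everything else is the standard orbit-stabilizer identity combined with the numerical hypothesis.
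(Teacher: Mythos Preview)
Your proof is correct and is essentially identical to the paper's own argument: both first check that $C_G(g)$ permutes $\cP_g$ via the commutation $(P^h)^g=(P^g)^h=P^h$, then apply the orbit--stabilizer theorem to conclude that the orbit $\alpha^{C_G(g)}$ has size $|\cP_g|$ and hence equals $\cP_g$.
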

\begin{proof}
For $\beta\in\cP_g$ and $x\in C_G(g)$, we have $(\beta^x)^g=(\beta^g)^x=\beta^x$, i.e., $\beta^x$ is in $\cP_g$. Therefore, $C_{G}(g)$ stabilizes $\cP_g$. The point $\alpha$ is in $\cP_g$ and its stabilizer in $C_G(g)$ is $C_{G}(g)\cap G_{\alpha}$. The orbit of $\alpha$ under $C_G(g)$ is thus contained in $\cP_g$, and it has the same size as $\cP_g$ by the condition in the lemma. It follows that $\cP_g$ is exactly the orbit of $\alpha$ under $C_G(g)$, which completes the proof.
\end{proof}

\section{Proof of Theorem \ref{main}}\label{pfmain}
This  section is devoted to the proof of Theorem \ref{main}. Let $\mathcal{S}$ be a finite think generalized quadrangle of order $(s,t)$ with point set $\cP$ and line set $\cL$, and suppose that $G$ is an automorphism group of $\mathcal{S}$ that acts primitively on both $\cP$ and $\cL$. We have $s,t\ge 2$. Assume that $G$ is almost simple with socle $X=\mathrm{PSL}(2,q)$, where $q=p^{f} \geqslant4$ with $p$ prime. Fix a point $\alpha$ and a line $\ell$ of $\mathcal{S}$, and set
\[
  M_{0}:=G_{\alpha}\cap X,\quad M_{1}:=G_{\ell}\cap X.
\]
We have $G_\alpha=N_G(M_0)$, since $G_\alpha$ normalizes $M_0$ and is maximal in $G$. Similarly, $G_\ell=N_G(M_1)$. Since $X$ is normal in $G$, it is transitive on both $\cP$ and $\cL$ by the primitivity assumption. We thus have
\begin{align}
|\cP|=&(s+1)(st+1)=\frac{|X|}{|M_{0}|},\label{point}\\
|\cL|=&(t+1)(st+1)=\frac{|X|}{|M_{1}|}.\label{line}
\end{align}

\begin{lemma}\label{aa}
Neither of $M_{0}$, $M_1$ is isomorphic to $\mathrm{C}_{p}^{f} \rtimes \mathrm{C}_{(q-1) / \operatorname{gcd}(2, q-1)}$.
\end{lemma}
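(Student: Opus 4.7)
The plan is to recognise $\mathrm{C}_p^f \rtimes \mathrm{C}_{(q-1)/\gcd(2,q-1)}$ as a Borel subgroup of $X = \PSL(2,q)$, that is, the stabilizer of a point in the natural $2$-transitive action of $X$ on the projective line $\PG(1,q)$, and to derive a contradiction from the fact that a thick generalized quadrangle admits no automorphism group that is $2$-transitive on its points.

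First I would suppose, for contradiction, that $M_0 \cong \mathrm{C}_p^f \rtimes \mathrm{C}_{(q-1)/\gcd(2,q-1)}$. By \eqref{point}, $|\cP| = q+1$. Since $X$ is transitive on $\cP$ with point-stabilizer $M_0$ at $\alpha$, the action of $X$ on $\cP$ is permutation isomorphic to its coset action on $X/M_0$, which is the well-known $2$-transitive action on $\PG(1,q)$. Hence $M_0$ acts transitively on the $q$ points of $\cP \setminus \{\alpha\}$.

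Next I would introduce the set $\Delta$ of points distinct from $\alpha$ and collinear with $\alpha$. Since $M_0 \le X \le G$ preserves the collinearity relation, $\Delta$ is $M_0$-invariant. In a generalized quadrangle of order $(s,t)$ with $s, t \geq 2$, a standard count gives $|\Delta| = s(t+1) \geq 6$, while $|(\cP \setminus \{\alpha\}) \setminus \Delta| = s^2 t \geq 8$; hence $\Delta$ is a proper nonempty $M_0$-invariant subset of $\cP \setminus \{\alpha\}$. This contradicts the transitivity of $M_0$ on $\cP \setminus \{\alpha\}$ and rules out this isomorphism type for $M_0$.

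The argument for $M_1$ is the point-line dual: if $M_1 \cong \mathrm{C}_p^f \rtimes \mathrm{C}_{(q-1)/\gcd(2,q-1)}$, then by \eqref{line} and the same identification, $|\cL| = q+1$ and $M_1$ acts transitively on $\cL \setminus \{\ell\}$, while the sets of lines concurrent with $\ell$ and lines not concurrent with $\ell$ (each distinct from $\ell$) have sizes $t(s+1) \geq 6$ and $s t^2 \geq 8$ respectively, giving the desired contradiction. The only nontrivial input is the identification of the Borel subgroup with a point-stabilizer in the natural $2$-transitive action of $\PSL(2,q)$; I do not anticipate any further obstacle.
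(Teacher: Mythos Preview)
Your proposal is correct and follows essentially the same approach as the paper: both arguments identify the coset action of $X$ on $X/M_0$ with the natural $2$-transitive action on $\PG(1,q)$ (using that the Borel subgroups form a single conjugacy class) and then derive a contradiction from the fact that collinearity is a nontrivial $X$-invariant relation on distinct point pairs. The paper states the contradiction as ``$X$ cannot map collinear points to noncollinear points,'' whereas you unpack this via the equivalent observation that the stabilizer $M_0$ of $\alpha$ preserves the proper nonempty subset $\Delta = \alpha^\perp \setminus \{\alpha\}$ of $\cP \setminus \{\alpha\}$; your explicit counts $|\Delta| = s(t+1)$ and $|\cP \setminus (\{\alpha\} \cup \Delta)| = s^2 t$ are a welcome addition but not a different idea.
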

\begin{proof}
By the point-line duality, it suffices to prove the claim for $M_0$.  Suppose to the contrary that $M_0$ is isomorphic to $\mathrm{C}_{p}^{f} \rtimes \mathrm{C}_{(q-1) / \operatorname{gcd}(2, q-1)}$. There is a unique conjugacy class of such subgroups by \cite{Dickson}, see also \cite[Corollary 2.2]{King}. The action of $X=\PSL(2,q)$ on the right cosets of $M_0$ is isomorphic to its natural action on the projective line $\textup{PG}(1,q)$. Therefore, the action of $X$ on $\cP$ is $2$-transitive. This is impossible, since $X$ cannot map collinear points to noncollinear points. This completes the proof.
\end{proof}

\begin{lemma}\label{lem_M0M1_max}
Both $M_0$ and $M_1$ are maximal subgroups of $X$. In particular, the group $X$ acts primitively on both points and lines of $\cS$.
\end{lemma}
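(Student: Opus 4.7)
My strategy is to proceed by contradiction. Suppose $M_0$ is not maximal in $X$. Since $G$ acts primitively on $\cP$, the stabilizer $G_\alpha = N_G(M_0)$ is maximal in $G$, so Lemma \ref{maximal} forces the triple $(G, G_\alpha, M_0)$ to appear in Table \ref{sporadic}. I will eliminate each of the ten rows of that table, after which the claim on $M_1$ will follow by applying the same argument to the dual quadrangle.

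For the nine rows arising from $q \in \{7, 9, 11\}$, I will read off $|\cP| = [G:G_\alpha]$ from the last column of Table \ref{sporadic} and enumerate all pairs $(s, t)$ with $s, t \ge 2$ satisfying $(s+1)(st+1) = |\cP|$, Higman's inequality, and the divisibility condition of Lemma \ref{HD}. The cases $|\cP| \in \{21, 28, 36, 66\}$ should admit no valid pair. The single surviving candidate is $|\cP| = 45$ with $(s, t) = (4, 2)$, which forces $|\cL| = 27$; but $27 \nmid |G|$ for any of $\mathrm{PGL}(2,9)$, $\mathrm{M}_{10}$, $\mathrm{P\Gamma L}(2,9)$ (whose orders are $720$, $720$, $1440$), contradicting the transitivity of $G$ on $\cL$.

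For the infinite row 10, where $G = \mathrm{PGL}(2, q)$ with $q = p$ prime, $q \equiv \pm 11, \pm 19 \pmod{40}$, $G_\alpha = \mathrm{S}_4$, and $|\cP| = q(q^2-1)/24$: the plan is to combine the Diophantine equation $(s+1)(st+1) = q(q^2-1)/24$ with Higman's bounds $s \le t^2$ and $t \le s^2$, the relation $|G_\ell| = 24(s+1)/(t+1)$ (which comes from $(s+1)|G_\alpha| = (t+1)|G_\ell|$ in Section \ref{model}), and the primality of $q$ (forcing $q \mid s+1$ or $q \mid st+1$), to narrow $(s, t)$ to a small list parametrised by $q$. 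A final appeal to Lemma \ref{subgroupPGL}, which enumerates all subgroups of $\mathrm{PGL}(2, q)$, should then show that no subgroup has the required index $|\cL| = (t+1)(st+1)$, yielding the contradiction.

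Once $M_0$ is known to be maximal in $X$, the same argument applied to the dual generalized quadrangle (interchanging the roles of $\cP$ and $\cL$) gives that $M_1$ is maximal in $X$ as well. Primitivity of the $X$-actions on $\cP \cong X/M_0$ and $\cL \cong X/M_1$ is then immediate from the maximality of the stabilizers. The principal obstacle will be row 10, where handling an infinite family requires delicate Diophantine analysis rather than a finite enumeration; finding the cleanest way to cross-reference the restrictions on $(s,t)$ against the subgroup orders of $\mathrm{PGL}(2,q)$ listed in Lemma \ref{subgroupPGL} is where the work concentrates.
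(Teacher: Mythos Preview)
Your treatment of the nine sporadic rows with $q\in\{7,9,11\}$ is correct and close to the paper's, though you give a direct argument for $|\cP|=45$ (via $|\cL|=27\nmid|G|$) where the paper simply cites \cite[Lemma~5.1]{Bamberg2012}. That is fine.

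The gap is in row~10. Your plan is to pin down $(s,t)$ via the Diophantine constraint and then check that no subgroup of $G=\mathrm{PGL}(2,q)$ has index $|\cL|$. But this cannot succeed as stated: the choice $G_\ell\cong\mathrm{S}_4$ (equivalently $M_1\cong\mathrm{A}_4$, so $s=t$) passes every test you list. Indeed $\mathrm{S}_4$ \emph{is} a maximal subgroup of $\mathrm{PGL}(2,q)$ for $q\equiv\pm11,\pm19\pmod{40}$ (that is precisely why row~10 exists), and with $|G_\alpha|=|G_\ell|=24$ the relation $|G_\ell|=24(s+1)/(t+1)$ is satisfied trivially. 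So after your narrowing you will be left with the case $s=t$, $(s+1)(s^2+1)=q(q^2-1)/24$, and nothing in your outline eliminates it.

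The paper closes this gap in two stages. First, it runs your type of argument on the \emph{line} side to show that $M_1$ cannot be maximal in $X$ either (the only candidates in Table~\ref{values} compatible with $q=p\equiv\pm11,\pm19\pmod{40}$ are $\mathrm{A}_5$, $\mathrm{D}_{p\pm1}$; these are excluded by normalizer considerations and the bound of Lemma~\ref{HD}(iii) together with a finite check). By duality this forces $M_1\cong\mathrm{A}_4$ and hence $s=t$. Second, and this is the idea missing from your proposal, it analyzes the fixed structure of an involution $g\in X$: one computes $|\cP_g|=|\cL_g|=(p-1)/4$ via Lemma~\ref{fixedpoints}, shows $C_X(g)\cong\mathrm{D}_{p-1}$ acts transitively on both (Lemma~\ref{transitiveaction}), so $\cS_g$ is a thick sub-GQ by Corollary~\ref{substructure}, and then observes that the cyclic subgroup $K\le C_X(g)$ of order $(p-1)/4$ acts \emph{regularly} on $\cP_g$, contradicting Lemma~\ref{regular}. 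Without some substitute for this fixed-structure step, your argument for row~10 is incomplete.
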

\begin{proof}
By the point-line duality, it suffices to prove the claim for $M_0$. Suppose to the contrary that $M_0$ is not maximal in $X$. Then $(G,G_\alpha,M_0)$ is one of the tuples in Table \ref{sporadic} by Lemma \ref{maximal}. By \cite[Lemma 5.1]{Bamberg2012}, $G$ is not one of $\mathrm{PGL}(2,9)$, $\mathrm{P\Gamma L}(2,9)$ or $\mathrm{M}_{10}$. For the three triples with $G=\mathrm{PGL}(2,7)$ or $\mathrm{PGL}(2,11)$ in Table \ref{sporadic}, the numbers of points are $|\cP|=(s+1)(st+1)=28,21,66$ by \eqref{point}, and so $(s,t)=(3,2),(2,3)$ and $(5,2)$ respectively. There are no generalized quadrangles of such orders, since neither of them satisfies the divisibility condition in Lemma \ref{HD}.

It remains to consider the case where  $(G,G_\alpha,M_0)=(\mathrm{PGL}(2,q),\mathrm{S}_{4},\mathrm{A}_{4})$, $q=p\equiv \pm 11,\pm19\pmod {40}$. We claim that $M_1$ is not maximal in $X$. Suppose to  the contrary that $M_1$ is maximal in $X$. Then it is one of the cases in Table \ref{values} by Lemma \ref{maximal}. It is not in Case 1 by Lemma \ref{aa}, and can only be in one of Cases $3$, $8$ or $9$ by the condition on $q$. There are two conjugacy classes of subgroups isomorphic to $\mathrm{A}_5$ in $\PSL(2,q)$, and their normalizers in $G=\PGL(2,q)$ are not maximal in $G$ by \cite[Corollary 2.3]{King}. This excludes Case 3.
For Case 8 where $M_{1}=\mathrm{D}_{p-1}$, we apply Lemma \ref{HD} (iii) to obtain $\left(\frac{p-1}{12}\right)^{3}<\frac{p(p+1)}{2}$. It holds only if $p<867$, and there are no feasible $(s,t)$ pairs for each such prime $p$ by direct check with Magma \cite{Bosma}. This excludes Case 8, and we exclude Case 9 in the same way. Therefore, $M_1$ is not maximal in $X$.

By the claim in the first paragraph of this proof and the point-line duality, we see that $M_1\cong\mathrm{A}_{4}$. It follows from $M_0\cong\mathrm{A}_{4}$ that $s=t$, and so $|\cP|=(s+1)(s^{2}+1)=\frac{1}{24}p(p^{2}-1)$ by \eqref{point}. We claim that $p\equiv 1\pmod{4}$, so that $p\equiv -11,-19\pmod{40}$. If not, then $p$ does not divide $s^2+1$ and so divides $s+1$. It follows that $s\ge p-1$. Then  $|\cP|=(s+1)(s^2+1)\ge p(p^2-2p+2)$ and it is strictly larger than $\frac{1}{24}p(p^{2}-1)$: a contradiction.

Take an involution $g$ in $X$, and let $\cP_g$, $\cL_g$ be the sets of its fixed points and fixed lines respectively. Then $C:=g^X$ is the unique conjugacy  class of involutions in $X$ by Lemma \ref{conjugacy}. Since $M_0\cong \mathrm{A}_4$ has three involutions which form a single conjugacy class $C_0$ of $M_0$, we deduce that $C\cap M_0=C_0$ and $|C_0|=3$. We apply Lemma \ref{fixedpoints} to the transitive action of $X$ on $\cP$ and obtain
\[
|\cP_g|=\frac{|\cP|\cdot|C\cap M_0|}{|C|}=\frac{1}{4}(p-1)>1.
\]
Similarly, we deduce that $|\cL_g|=\frac{1}{4}(p-1)>1$. The involution $g$ fixes more than one point and one line, so we assume without loss of generality that $\alpha,\ell$ are chosen from $\cP_g,\cL_g$ respectively, i.e., $g$ is in both  $M_0$ and $M_1$. We have $|C_X(g)|=p-1$ by Lemma \ref{conjugacy}, $|C_X(g)\cap M_i|=|C_{M_i}(g)|=\frac{1}{3}|\mathrm{A}_4|=4$ for $i=0,1$. It follows from Lemma \ref{transitiveaction} that $C_X(g)$ is transitive on both $\cP_g$ and $\cL_g$. By Corollary \ref{substructure} (i), the fixed structure $\cS_g=(\cP_g,\cL_g)$ is a generalized quadrangle of order $(s',t')$. Since $p\equiv -11$ or $-19\pmod{40}$, $\frac{p-1}{4}$ is odd and at least $7$.
It follows that $s'=t'\ge 2$ and $\cS_g$ is thick. The group $C_X(g)$ is a dihedral group and contains a cyclic subgroup $K$ of order $\frac{1}{4}(p-1)$ by Lemma \ref{conjugacy}. Since $K$ has odd order and $C_X(g)\cap M_i$ has order $4$, we see that $K\cap M_i=1$ for $i=0,1$. It follows that the orbit $\alpha^K$ has size $[K:\,K\cap M_0]=|K|=|\cP_g|$, i.e., $K$ is regular on $\cP_g$. This contradicts Lemma \ref{regular} and completes the proof.
\end{proof}

In view of Lemma \ref{lem_M0M1_max}, we assume without loss of generality that
\[
G=X=\PSL(2,q),\quad G_\alpha=M_{0},\quad G_\ell=M_{1}.
\]
Both of $M_0,M_1$ are maximal subgroups of $X$, so they appear in Table \ref{values}. In the next two subsections, we consider two separate cases according as they have the same Case numbering in Table \ref{values} or not. If they have distinct numberings,  we show that there are no integers $(s,t)$ that satisfy both \eqref{point}, \eqref{line} and the restrictions in Lemma \ref{HD}. If $M_0$ and $M_1$ have the same numberings, we make use of the coset geometry model in Section \ref{model} and the results on the fixed substructure of an automorphism  in Section \ref{pre}. There is one example that arises in the case where $M_0,M_1$ are both in Case $2$ of Table \ref{values}, i.e., $W(2)$. Its full automorphism group has socle $\textup{PSp}(4,2)'\cong \PSL(2,9)$.

\subsection{$M_0,M_1$ have  distinct Case numberings in Table \ref{values}}\label{sneqt}

In this subsection we consider the cases where $M_0,M_1$ have distinct Case numberings in Table \ref{values}. By the point-line duality, we assume without loss of generality that the Case numbering of $M_0$ is smaller than that of $M_1$. By Lemma \ref{aa}, $M_0$ is not in Case 1 of Table \ref{values}. For each given $M_0$, the size of $M_1$ should satisfy the bounds in Lemma \ref{lem_boundM1}. In Table \ref{tab_cand_M1}, we list the possible cases for $M_1$ for a given $M_0$ by considering the restrictions on $q$ and the bounds on $|M_1|$ in Lemma \ref{lem_boundM1}. For instance, if $M_0$, $M_1$ are in Cases 3 and 8 respectively, then the smallest prime power that satisfies the conditions on $q$ in both cases is $19$. The lower bound on $|M_1|$ in Lemma \ref{lem_boundM1} is trivial, and the upper bound yields $\left(\frac{q-1}{60}\right)^{3}<\frac{q(q+1)}{2}$ which holds only if $q<108004$. If $M_0$ is in one of Cases 6-8, then the bounds on $|M_1|$ in Lemma \ref{lem_boundM1} hold trivially and $M_1$ can be any of Cases 7-9.
\begin{table}[]
\begin{center}
\caption{Possible cases for $M_1$ if $M_0$ is in one of Cases 2-5}\label{tab_cand_M1}
\begin{tabular}{ccc}
\hline
Case for $M_0$ & Possible cases for $M_1$ & Condition                                                                                  \\ \hline
2              & 6                        & $q=q_0^2=q_1^r$, $r$ odd prime                                                             \\ \hline
3              & 5                        & $q=p \equiv \pm 1, \pm 9\pmod {40}$                                                        \\
               & 8                        & $q\equiv \pm 1\pmod {10}$, $19\le q<108004$ \\
               & 9                        & $q\equiv \pm 1\pmod {10}$, $19\le q<107996$ \\ \hline
4              & 8                        & $q=p \equiv \pm 3, \pm 27\pmod {40}$, $37\le p<867$                                        \\
               & 9                        & $q=p \equiv \pm 3, \pm 27\pmod {40}$, $37\le p<859$                                        \\ \hline
5              & 8                        & $q=p \equiv \pm 1\pmod 8$, $17\le p<6916$                                                  \\
               & 9                        & $q=p \equiv \pm 1\pmod 8$, $17\le p<6908$                                                  \\ \hline
\end{tabular}
\end{center}
\end{table}

\begin{lemma}
The subgroup $M_{0}$ is not in Case 2 of Table \ref{values}.
\end{lemma}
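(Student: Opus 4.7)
The plan is to assume $M_{0}$ is in Case 2 of Table \ref{values} and derive a contradiction from Higman's inequality, specifically Lemma \ref{HD}(iii). By Lemma \ref{lem_M0M1_max} and the row for Case 2 in Table \ref{tab_cand_M1}, the only possibility for $M_{1}$ is Case 6, so $M_{0}\cong\PGL(2,q_{0})$ with $q=q_{0}^{2}$ odd, and $M_{1}\cong\PSL(2,q_{1})$ with $q=q_{1}^{r}$ for some odd prime $r$.

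Writing $q=p^{f}$, the conditions $2\mid f$ and $r\mid f$ together with $\gcd(2,r)=1$ give $2r\mid f$, so setting $p_{0}=p^{f/(2r)}$ yields $q_{0}=p_{0}^{r}$, $q_{1}=p_{0}^{2}$, with $p_{0}\geq 3$ since $q$ is odd. A direct computation then gives
\[
|M_{0}|=p_{0}^{r}(p_{0}^{2r}-1),\qquad |M_{1}|=\tfrac{1}{2}p_{0}^{2}(p_{0}^{4}-1),\qquad |\cP|=\tfrac{1}{2}p_{0}^{r}(p_{0}^{2r}+1),
\]
and the identity $(t+1)/(s+1)=|M_{0}|/|M_{1}|$ from the proof of Lemma \ref{lem_boundM1} becomes
\[
\frac{t+1}{s+1}=\frac{2p_{0}^{r-2}(p_{0}^{2r}-1)}{p_{0}^{4}-1}.
\]
Since $|M_{0}|>|M_{1}|$ we have $t>s$, so Lemma \ref{HD}(iii) applies in the nontrivial form $((t+1)/(s+1))^{3}<|\cP|$.

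The remaining step is to show this inequality is violated for every odd prime $r\geq 3$ and every odd prime power $p_{0}\geq 3$. After clearing denominators it reads
\[
16\,p_{0}^{2r-6}(p_{0}^{2r}-1)^{3}<(p_{0}^{2r}+1)(p_{0}^{4}-1)^{3},
\]
whose left-hand side is of order $16\,p_{0}^{8r-6}$ and right-hand side of order $p_{0}^{2r+12}$, so the exponent difference is $6(r-3)\geq 0$; for $r\geq 5$ the left dominates comfortably. The only delicate case is $r=3$, where both sides are of leading order $p_{0}^{18}$. In this case, setting $x=p_{0}^{2}$ and cancelling the common factor $(x-1)^{3}$ from both sides reduces the required reverse inequality to $16(x^{2}+x+1)^{3}>(x+1)^{4}(x^{2}-x+1)$ for all $x\geq 9$, which is immediate from direct evaluation at $x=9$ (where the two sides are $12057136$ and $730000$) together with a straightforward monotonicity comparison of the ratio. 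I expect this final algebraic verification of the $r=3$ boundary case to be the only routine obstacle; the rest of the argument is a clean application of the ratio identity and Higman's inequality.
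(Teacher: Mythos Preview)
Your proposal is correct and takes essentially the same approach as the paper: reduce to $M_{1}$ in Case~6 via Table~\ref{tab_cand_M1}, compute $(t+1)/(s+1)=|M_{0}|/|M_{1}|$, and contradict Lemma~\ref{HD}(iii). The only difference is cosmetic: the paper parameterizes by $q_{1}$ (your $p_{0}^{2}$) and dispatches the resulting inequality $16q_{1}^{r-3}(q_{1}^{r}-1)^{3}<(q_{1}^{2}-1)^{3}(q_{1}^{r}+1)$ uniformly for all $r\ge 3$ via a short chain of estimates, rather than splitting off the $r=3$ boundary case as you do.
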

\begin{proof}
Suppose to the contrary that $M_{0}=\mathrm{PGL}(2,q_{0})$ with $q=q_{0}^{2}$ odd. By Table \ref{tab_cand_M1}, $M_1$ can only be in Case 6 of Table \ref{values}. Suppose that this is the case, i.e., $M_1=\mathrm{PSL}(2,q_{1})$, where $q=q_{1}^{r}$ with $r$ an odd prime. In particular, $q_1$ is a square and $q_0=q_1^{r/2}$. We have $\frac{t+1}{s+1}=\frac{|M_0|}{|M_1|}=\frac{2q_{0}(q_{0}^{2}-1)}{q_{1}(q_{1}^{2}-1)}$ by \eqref{point} and \eqref{line}, and $|\cP|=[X:\,M_0]=\frac{1}{2}q_0(q_0^2+1)$. By Lemma \ref{HD}(iii), we have $\left(\frac{t+1}{s+1}\right)^3<|\cP|$. After plugging in the expressions of $\frac{t+1}{s+1}$ and $|\cP|$, we deduce that
\begin{equation}\label{a1}
16q_1^{r-3}(q_1^r-1)^3<(q_1^2-1)^3(q_1^r+1).
\end{equation}
We have $q_1^r-1>\frac{1}{2}(q_1^r+1)$, since $q_1\ge 9$ and $r\ge 3$. Therefore,
\begin{align*}
16q_1^{r-3}(q_1^r-1)^3&>2q_1^{r-3}(q_1^r+1)^3>2q_1^{3r-3}(q_1^r+1)\\
&\ge 2q_1^{6}(q_1^r+1)>(q_1^2-1)^3(q_1^r+1),
\end{align*}
which contradicts \eqref{a1}. This completes the proof.
\end{proof}

\begin{lemma}
The subgroup $M_{0}$ is not in Case 3 of Table \ref{values}.
\end{lemma}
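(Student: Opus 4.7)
Suppose toward a contradiction that $M_0\cong\mathrm{A}_5$, so $|M_0|=60$ and $|\mathcal{P}|=q(q^2-1)/120$. By Lemma \ref{aa} and the bounds of Lemma \ref{lem_boundM1} summarized in Table \ref{tab_cand_M1}, $M_1$ must lie in Case~5, 8, or~9 of Table \ref{values}. The plan is to dispose of Case~5 via a fixed-substructure argument using an element of order $3$, and to handle Cases~8 and~9 by the Lemma \ref{HD}(iii) bound combined with a finite Magma search, in the spirit of the proof of Lemma \ref{lem_M0M1_max}.

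For Case~5, where $M_1\cong\mathrm{S}_4$ and $(t+1)/(s+1)=60/24=5/2$, Lemma \ref{HD}(iii) yields no bound on $q$, so one must pass to a fixed-point analysis. The congruence conditions force $q\geq 31$ and hence $p>5$, so Lemma \ref{conjugacy2} applies to an element $g\in X$ of order $3$: writing $\delta=q\bmod 3\in\{\pm 1\}$, one has $|C_X(g)|=(q-\delta)/2$ with $C_X(g)$ cyclic. Both $\mathrm{A}_5$ and $\mathrm{S}_4$ have a single conjugacy class of order-$3$ elements with centralizer $\mathrm{C}_3$, of sizes $20$ and $8$ respectively, so Lemma \ref{fixedpoints} gives $|\mathcal{P}_g|=|\mathcal{L}_g|=(q-\delta)/6$. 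Since $|C_X(g)|/|C_{M_i}(g)|=(q-\delta)/6$ for $i=0,1$, Lemma \ref{transitiveaction} shows $C_X(g)$ is transitive on both $\mathcal{P}_g$ and $\mathcal{L}_g$; Corollary \ref{substructure}(i) then identifies $\mathcal{S}_g$ as a generalized quadrangle of order $(s',t')$, and $|\mathcal{P}_g|=|\mathcal{L}_g|$ forces $s'=t'$. Because $C_X(g)$ is cyclic, the stabilizer in $C_X(g)$ of every point of $\mathcal{P}_g$ coincides with its unique subgroup of order~$3$; hence $C_X(g)/\mathrm{C}_3$ is a cyclic (abelian) group acting regularly on $\mathcal{P}_g$. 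If $s'\geq 2$, then $\mathcal{S}_g$ is a thick generalized quadrangle of order $s'$ and Lemma \ref{regular} gives a contradiction; if $s'=1$, then $|\mathcal{P}_g|=4$ forces $q=24+\delta\in\{23,25\}$, and neither value satisfies both the Case~3 condition on $M_0$ and the Case~5 condition on $M_1$.

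For Cases~8 and~9 the subgroup $M_1$ is dihedral of order $q\mp 1$, so $(t+1)/(s+1)=60/(q\mp 1)$ is small for large $q$; applying Lemma \ref{HD}(iii) to the dominant side gives $(q\mp 1)^3<60^3\cdot|\mathcal{L}|$, yielding an upper bound $q<10^5$ as in the Case~8 calculation at the end of Lemma \ref{lem_M0M1_max}. Over the resulting finite list of admissible $q$ (primes $p\equiv\pm 1\pmod{10}$, together with squares $p^2$ of primes $p\equiv\pm 3\pmod{10}$, each subject to the additional Case~8 or Case~9 side conditions), a direct Magma search confirms that no pair $(s,t)$ satisfies both \eqref{point} and \eqref{line} along with the divisibility condition of Lemma \ref{HD}(ii).

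The principal obstacle is Case~5: the bounded size ratio rules out any numerical restriction on $q$ coming from Lemma \ref{HD}(iii), and the involution-based analysis used in Lemma \ref{lem_M0M1_max} does not extend here, because an involution of $X$ lying in $\mathrm{S}_4$ may be either a transposition or a double transposition, obstructing the transitivity input to Lemma \ref{transitiveaction} on the line side. The workaround is the happy coincidence that order-$3$ elements in both $\mathrm{A}_5$ and $\mathrm{S}_4$ form a single conjugacy class with $\mathrm{C}_3$-centralizer, producing the balanced fixed substructure on which the cyclic centralizer $C_X(g)$ descends to an abelian regular action that Lemma \ref{regular} forbids on a thick subquadrangle.
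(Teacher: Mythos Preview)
Your argument is correct, and for Cases~8 and~9 it matches the paper's approach exactly: bound $q$ via Lemma~\ref{HD}(iii) and finish with a finite computer search.

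For Case~5, however, you take a genuinely different route from the paper. The paper handles $M_1\cong\mathrm{S}_4$ by a short elementary divisibility argument: writing $s+1=2k$, $t+1=5k$, one has $s+t=7k-2$, and reducing $7^4\cdot st(s+1)(t+1)$ modulo $7k-2$ gives $-360$, so $7k-2\mid 360$. This leaves only $k\in\{2,6,26\}$, and none of these yields a prime $q$ satisfying \eqref{point}. Your approach instead imports the fixed-substructure machinery that the paper reserves for Section~\ref{seqt}: you pick an order-$3$ element $g$, exploit the coincidence that both $\mathrm{A}_5$ and $\mathrm{S}_4$ have a unique class of such elements with centralizer $\mathrm{C}_3$, and obtain a thick subquadrangle $\mathcal{S}_g$ of order $s'$ on which the cyclic group $C_X(g)$ acts transitively on points and lines, contradicting Lemma~\ref{abelian} (or, as you phrase it, Lemma~\ref{regular} applied to $C_X(g)/\mathrm{C}_3$). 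This is valid and conceptually pleasing, and indeed mirrors what the paper does in Lemma~\ref{regulargroup} for Case~5 when $M_0\cong M_1\cong\mathrm{S}_4$. The trade-off is that the paper's divisibility trick is considerably shorter and requires no structural input, so your remark that ``one must pass to a fixed-point analysis'' overstates the necessity: Lemma~\ref{HD}(iii) alone does not bound $q$, but Lemma~\ref{HD}(ii) does bound $k$, and that suffices.
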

\begin{proof}
Suppose to the contrary that $M_{0}=\mathrm{A}_{5}$ for $q=p \equiv \pm 1\pmod {10}$ or $q=p^{2}$ with $p \equiv \pm 3\pmod {10}$. By Table \ref{tab_cand_M1}, $M_1$ can be in Cases 5, 8, or 9 of Table \ref{values}, and we have $13\le q<108004$ for Case 8 and $11\le q<107996$ for  Case 9. We verify with computer that there are no feasible $(s,t)$ pairs that satisfy \eqref{point} and \eqref{line} for the latter two cases. It remains to consider Case 5, i.e., $M_1=\mathrm{S}_4$, $q=p\equiv \pm 1\pmod{8}$. In this case, $\frac{t+1}{s+1}=\frac{|M_0|}{|M_1|}=\frac{5}{2}$ by \eqref{point} and \eqref{line}. There exist positive integer $k$ such that $t+1=5k$, $s+1=2k$. Since $s>1$, we have $k>1$. By Lemma \ref{HD}(ii),  we have
\[
0\equiv 7^4st(s+1)(t+1)=7^4\cdot 10k^{2}(2k-1)(5k-1)\equiv -360\pmod{7k-2}.
\]
It follows that $7k-2$ divides $360$, which holds only if $k=2,6$ or $26$. There is no prime $q=p$ that satisfies \eqref{point} and \eqref{line} for such $k$'s. This completes the proof.
\end{proof}

\begin{lemma}
The subgroup $M_{0}$ is not in Case 4 or Case 5 of Table \ref{values}.
\end{lemma}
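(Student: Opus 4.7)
The plan is to follow the same finite-enumeration strategy used in the preceding lemma. By Table \ref{tab_cand_M1}, the hypothesis that $M_0$ is in Case 4 or Case 5 of Table \ref{values} forces $M_1$ into Case 8 or Case 9, yielding four subcases; in each one the range of admissible primes $p$ is explicitly finite---at most $p < 867$ when $M_0 \cong \mathrm{A}_4$, and at most $p < 6916$ when $M_0 \cong \mathrm{S}_4$---subject to the congruence restrictions listed in that table.

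For each subcase one has $|M_0| \in \{12, 24\}$ and $|M_1| \in \{p-1,\, p+1\}$, and equations \eqref{point} and \eqref{line} read
\begin{align*}
(s+1)(st+1) &= \frac{p(p^{2}-1)}{2|M_0|}, \\
(t+1)(st+1) &= \frac{p(p^{2}-1)}{2|M_1|},
\end{align*}
forcing in particular the ratio identity $|M_1|(s+1) = |M_0|(t+1)$. I would then, for each prime $p$ in the prescribed range satisfying the relevant congruence, enumerate the divisors $s+1$ of $\frac{p(p^{2}-1)}{2|M_0|}$ with $s \ge 2$, read off the putative $t+1 = |M_1|(s+1)/|M_0|$ (discarding non-integer cases), and test whether the resulting pair $(s,t)$ simultaneously satisfies the second displayed equation, Higman's inequality, and the divisibility condition from Lemma \ref{HD}.

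The expectation is that no admissible pair $(s,t)$ arises in any subcase, which yields a contradiction with the existence of $\mathcal{S}$. The main practical obstacle is simply the size of the Case 5 ranges, where $p$ can reach nearly $7000$; however, since each check is an elementary arithmetic test and the candidate values of $s+1$ are confined to divisors of an explicit integer of size $O(p^3)$, a short Magma routine in the exact style of the one invoked in the preceding lemma exhausts all four subcases quickly. Once the search terminates with no admissible parameter pair, neither Case 4 nor Case 5 can occur for $M_0$.
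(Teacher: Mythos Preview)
Your proposal is correct and follows essentially the same approach as the paper: reduce to the four subcases dictated by Table~\ref{tab_cand_M1}, observe that each constrains $p$ to an explicit finite range, and exhaust these ranges by a Magma search for admissible $(s,t)$ pairs satisfying \eqref{point}, \eqref{line}, and Lemma~\ref{HD}. The paper's proof is terser---it simply asserts the computer verification without spelling out the divisor enumeration or the ratio identity---but the underlying method is identical.
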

\begin{proof}
Suppose to the contrary that $M_0=\mathrm{A}_{4}$, where $q=p \equiv \pm 3\pmod 8$ and $q \not \equiv \pm 1\pmod {10}$. By Table \ref{tab_cand_M1}, we have the following candidates for $M_1$: $\mathrm{D}_{p-1}$ with $13\le p<867$, or $\mathrm{D}_{p+1}$ with $p<859$. We verify with computer that there are no feasible $(s,t)$ pairs that satisfy \eqref{point} and \eqref{line} in each case. The case where $M_1=\mathrm{S}_{4}$ is excluded in a similar way. This completes the proof.
\end{proof}

\begin{lemma}\label{f}
The subgroup $M_{0}$ is not in Case 6 of Table \ref{values}.
\end{lemma}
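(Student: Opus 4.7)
The plan is to suppose for contradiction that $M_0\cong\mathrm{PSL}(2,q_0)$ with $q=q_0^r$ for some odd prime $r$, and then derive a contradiction. Since $q$ is odd, Case 7 of Table \ref{values} is excluded for $M_1$, and Case 1 is excluded by Lemma \ref{aa}. Under the standing assumption of this subsection that $M_0$ and $M_1$ have distinct case numberings in Table \ref{values}, $M_1$ must therefore lie in Case 8 or Case 9; that is, $M_1\cong\mathrm{D}_{q-1}$ or $M_1\cong\mathrm{D}_{q+1}$. I will handle each of these two subcases separately.

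In either subcase, I would write the reduced fraction $|M_0|/|M_1|=\mu/\nu$ with $\gcd(\mu,\nu)=1$; a direct gcd computation (using $q_0\equiv 0\pmod{q_0}$ and $q_0\equiv -1\pmod{q_0+1}$ and that $q_0$ is odd) will yield $\mu=q_0(q_0+1)/2$ and $\nu=(q_0^r\mp 1)/(q_0\mp 1)$. By \eqref{point} and \eqref{line} we have $s+1=\nu k$ and $t+1=\mu k$ for some positive integer $k$, and substituting into \eqref{point} gives the cubic Diophantine equation
\[
\nu k\bigl(\mu\nu k^2-(\mu+\nu)k+2\bigr)=|\cP|=\frac{q_0^{r-1}(q_0^{2r}-1)}{q_0^2-1}.
\]
Since $|M_0|<|M_1|$ here for all $q_0\ge 3$ and $r\ge 3$, we have $\mu<\nu$, so $t<s$ and the Higman bound $t\le s^2$ from Lemma \ref{HD}(i) is automatic; the binding constraint is $s\le t^2$, which after expansion translates to the lower bound $k\gtrsim\nu/\mu^2$ on admissible $k$.

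The remaining task is to show that no positive integer $k$ in this range solves the cubic. For $r\ge 5$, the lower bound $\nu/\mu^2$ grows polynomially in $q_0$, and a leading-term comparison in $q_0$ will show that the left-hand side of the cubic, which is dominated by $\mu\nu^2 k^3$, already exceeds $|\cP|$ at this minimum $k$; any larger $k$ only increases the disparity. For $r=3$, an explicit polynomial expansion will show that the left-hand side at $k=1$ is strictly less than $|\cP|$ (by verifying positivity of a polynomial of the form $q_0^4-4q_0^3+3q_0^2+2q_0-2$ for all odd $q_0\ge 3$) while at $k=2$ it is strictly greater, ruling out integer solutions. The subcase $M_1\cong\mathrm{D}_{q+1}$ is handled by the same strategy after substituting $q_0^r+1$ for $q_0^r-1$. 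The main obstacle will be producing clean uniform polynomial estimates in both $q_0$ and $r$, and in particular the $r=3$ case where $k$ is forced to equal $1$ and the estimates are tight; any residual small values of $(q_0,r)$ not covered by the uniform bounds will be eliminated by the divisibility condition of Lemma \ref{HD}(ii) together with a direct computer check.
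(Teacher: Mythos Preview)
Your overall setup (reducing to $M_1\in\{\mathrm{D}_{q-1},\mathrm{D}_{q+1}\}$, computing the reduced ratio $\mu/\nu$, and introducing the integer parameter $k$) is sound, and the $r=3$ sandwiching idea is correct in spirit, although the polynomial you wrote down is not the right one: a direct expansion gives
\[
2|\cP|-2\cdot\mathrm{LHS}\big|_{k=1}=q_0^{6}-3q_0^{5}+q_0^{3}+3q_0^{2}-2,
\]
not $q_0^4-4q_0^3+3q_0^2+2q_0-2$.

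The genuine gap is at $r=5$. Your claim that $\nu/\mu^{2}$ ``grows polynomially in $q_0$'' for $r\ge 5$ is false at $r=5$: there $\nu\sim q_0^{4}$ and $\mu\sim q_0^{2}/2$, so $\nu/\mu^{2}\sim 4$ is essentially constant, and the Higman lower bound only forces $k\ge 4$ uniformly in $q_0$. At $k=4$ the left side of your cubic is $\sim 32\,q_0^{10}$ while $|\cP|\sim q_0^{12}$, so for all sufficiently large $q_0$ the left side is \emph{smaller} than $|\cP|$ at the Higman threshold; concretely, for $q_0=7$ one computes $\mathrm{LHS}|_{k=4}\approx 1.393\times 10^{10}<|\cP|\approx 1.413\times 10^{10}$, and the disparity only worsens as $q_0$ grows. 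Hence your leading-term comparison does not exclude integer $k$ between $4$ and the real root of the cubic (which is $\sim q_0^{2/3}$), and this is an infinite family, not a residual finite set amenable to Lemma~\ref{HD}(ii) plus computer search. For $r\ge 7$ your asymptotic argument does work, since then $\nu^{5}/\mu^{5}\sim 32\,q_0^{2r-12}$ genuinely dominates $|\cP|\sim q_0^{3r-3}$.

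The paper closes this gap by a different mechanism: it first observes that $|\cP|$ is odd so $s$ is even and hence $k$ is odd, then shows $k<q_0^{(r-1)/2}$ and that $q_0^{r-1}\mid 1+st$, and finally runs a $q_0$-adic congruence argument (reducing $s+1$, $t+1$, $1+st$ modulo $q_0$, $q_0^{2}$, $q_0^{3}$) to force $k\equiv 2-q_0\pmod{q_0^{2}}$. This lower bound $k\ge q_0^{2}-q_0+2$ immediately contradicts $k<q_0^{(r-1)/2}$ for $r=3$, pins $k$ to a single value for $r=5$ which then fails a divisibility check, and similarly for $r=7$. You will need an argument of this arithmetic flavour to dispose of $r=5$; size comparisons alone are not enough.
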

\begin{proof}
Suppose to the contrary that $M_0=\mathrm{PSL}(2,q_{0})$, where $q$ is odd and $q=q_{0}^{r}$ with $r$ an odd prime. We have
\begin{equation}\label{eqn_cPC6}
(1+s)(1+st)=|\cP|=[X:\,M_0]=\frac{q(q^2-1)}{q_0(q_0^2-1)}
\end{equation}
which is odd, so $s$ is even. By the restriction on $q$, $M_1$ can only be Case 8 or Case 9 of Table \ref{values}. The two cases are excluded by the same approach, so we only give the details for Case 8 here. In this case, $M_1=\mathrm{D}_{q-1}$ with $q\ge 13$, $|\cL|=[X:\,M_1]=\frac{1}{2}q(q+1)$, and
\begin{equation}\label{eqn_4C6}
\frac{s+1}{t+1}=\frac{|M_1|}{|M_0|}=\frac{2(q-1)}{q_{0}(q_{0}^{2}-1)}.
\end{equation}
We plug them into the inequality $\left(\frac{s+1}{t+1}\right)^3<|\cL|$ from Lemma \ref{HD} (iii) and deduce that $16(q-1)^3<q(q+1)q_0^3(q_0^2-1)^3$. Since $(q-1)^3=q(q+1)(q-4)+(7q-1)$, it follows that $16(q-4)<q_0^3(q_0^2-1)^3$. It does not hold when $r\ge 11$, so $r\le 7$.

Since $q$ and $r$ are odd, the greatest common divisor of the numerator and denominator of the right hand side of \eqref{eqn_4C6} is $q_0-1$. Therefore, there is an integer $k$ such that $s+1=\frac{q_{0}^{r}-1}{q_{0}-1}\cdot k$ and $t+1=\frac{1}{2}kq_{0}(q_{0}+1)$. Since $s$ is even, $k$ is odd. It is then routine to show that $k$ is relatively prime to $1+st$.  We deduce from \eqref{eqn_cPC6} that
\begin{equation}\label{eqn_tmp1}
 k(1+st)=\frac{q_0^r+1}{q_0+1}q_0^{r-1},
\end{equation}
We deduce from $1+st>1+s$ that $k^2<\frac{(q_0^r+1)(q_0-1)}{(q_0^r-1)(q_0+1)}q_0^{r-1}<q_0^{r-1}$, so $k<q_0^{(r-1)/2}$. We deduce from \eqref{eqn_tmp1} that $q_0^{r-1}$ divides either $k$ or $1+st$. Since $k<q_0^{(r-1)/2}$, $q_0^{r-1}$ divides $1+st$.
There is an integer $u$ such that
$uk=\frac{q_0^r+1}{q_0+1}$, $1+st=q_0^{r-1}u$. Write $k=b+aq_0$ with $0\le b\le q_0-1$ and $a\in\mathbb{Z}$. Taking modulo $q_0^2$, we obtain
\[
s+1\equiv b+(a+b)q_0,\; 2(t+1)\equiv bq_0, \;2+2st\equiv 0\pmod{q_0^2}.
\]
We determine $s,2t\pmod{q_0^2}$ from the first two equations and plug them into the third to obtain $2+(b-1+(a+b)q_0)(bq_0-2)\equiv 0\pmod{q_0^2}$. Taking modulo $q_0$, we obtain $b=2$. The former equation then yields $a\equiv -1\pmod{q_0}$. We thus have $k\equiv 2-q_0\pmod{q_0^2}$. In particular, we have $k\ge q_0^2-q_0+2$. If $r=3$, then it contradicts the fact $k<q_0^{(r-1)/2}=q_0$. If $r=5$, then we deduce from $k<q_0^2$ that $k=q_0^2-q_0+2$, but $k$ does not divide $\frac{q_0^5+1}{q_0+1}$ in this case: a contradiction.

It remains to consider the case $r=7$. Write $k=2-q_0+cq_0^2$ with $c\in\mathbb{Z}$. By considering $1+s$, $1+t$, $2(1+st)$ modulo $q_0^3$, we obtain $c\equiv\frac{q_0+1}{2}\pmod{q_0}$ by a similar procedure. It follows from $k<q_0^3$ that $k=\frac{q_0+1}{2}q_0^2-q_0+2$. We have $\frac{q_0^7+1}{q_0+1}\equiv 33q_0^2-49q_0+57\pmod{k}$, and $33q_0^2-49q_0+57$ is nonzero and smaller than $k$ if $q_0>61$. Hence $k$ does not divide $\frac{q_0^7+1}{q_0+1}$ when $q_0>61$. We check by computer that $k$ does not divide $\frac{q_0^7+1}{q_0+1}$ when $13\le q_0\le 61$ either. This completes the proof.
\end{proof}

\begin{lemma}\label{g}
The subgroup $M_{0}$ is not in Case 7 of Table \ref{values}.
\end{lemma}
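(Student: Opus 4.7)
The plan is to follow the strategy of Lemma \ref{f} almost verbatim, since Case 7 closely parallels Case 6 in structure; the essential differences are that $q$ is now even and the prime $r$ may equal $2$.

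Suppose for contradiction that $M_0 = \mathrm{PGL}(2,q_0)$ with $q=2^f=q_0^r$, $r$ prime, and $q_0\geq 4$. Since $q$ is a power of $2$, Cases 2--6 of Table \ref{values} are incompatible with the conditions on $q$, so by the ordering convention of Section \ref{sneqt} the group $M_1$ must lie in Case 8 or Case 9; equivalently $M_1\cong \mathrm{D}_{2(q-1)}$ or $\mathrm{D}_{2(q+1)}$. In Case 8, Lemma \ref{HD}(iii) applied to $\frac{s+1}{t+1}=\frac{|M_1|}{|M_0|}=\frac{2(q-1)}{q_0(q_0^2-1)}$ yields $16(q-1)^3<q(q+1)q_0^3(q_0^2-1)^3$, which forces $r\leq 7$; a symmetric bound covers Case 9. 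Hence $r\in\{2,3,5,7\}$.

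For Case 8 with $r$ odd, the key observation would be that the reduced form of $\frac{s+1}{t+1}$ and the expression $|\cP|=\frac{q_0(q^2-1)}{q_0^2-1}$ coincide with the identities used in Lemma \ref{f}: the cancellation giving $\frac{s+1}{t+1}=\frac{(q_0^r-1)/(q_0-1)}{q_0(q_0+1)/2}$ works identically whether $q_0$ is even or odd, because exactly one of $q_0,q_0+1$ is even in each case. Thus $s+1=\frac{q_0^r-1}{q_0-1}k$, $t+1=\frac{q_0(q_0+1)}{2}k$, and $k(1+st)=\frac{q_0^{r-1}(q_0^r+1)}{q_0+1}$, so the modular-arithmetic argument of Lemma \ref{f} transfers directly and disposes of $r\in\{3,5,7\}$. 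For $r=2$, the ratio simplifies to $\frac{2}{q_0}$, giving $s+1=2k$, $t+1=q_0k$, and $(1+s)(1+st)=q_0(q_0^2+1)$ becomes a cubic in $k$; combined with the bound $k^2<q_0$ derived from $1+st>1+s$, a simple growth estimate plus a check of a handful of small $q_0$ (powers of $2$) eliminates this subcase.

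Case 9 is then handled in parallel. Reducing $\frac{s+1}{t+1}=\frac{2(q+1)}{q_0(q_0^2-1)}$ yields, for $r$ odd, $s+1=\frac{q_0^r+1}{q_0+1}k$, $t+1=\frac{q_0(q_0-1)}{2}k$, and the point count gives $k(1+st)=q_0\cdot\frac{q_0^r-1}{q_0-1}$; the same style of analysis modulo powers of $q_0$ then applies. The main obstacle I anticipate is the $r=7$ computation in Case 9: this mirrors but does not mechanically reproduce the most delicate step of Lemma \ref{f}, and will require writing $k$ in base $q_0$, comparing successive residues of the Diophantine identity to force $k$ to a single explicit value, and then checking that value fails the required divisibility. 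The even-$q$ setting changes several \emph{gcd} computations along the way and must be tracked carefully; any finite list of residual $q_0$ (powers of $2$ below an explicit bound) should be dispatched by computer, as in Lemma \ref{f}.
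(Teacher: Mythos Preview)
Your overall plan coincides with the paper's: for odd $r\ge 3$ both you and the paper defer to the argument of Lemma~\ref{f}, and $r=2$ is treated separately. Your caution that ``the even-$q$ setting changes several gcd computations'' is well placed: in particular the step ``$|\cP|$ odd $\Rightarrow s$ even $\Rightarrow k$ odd $\Rightarrow \gcd(k,1+st)=1$'' from Lemma~\ref{f} no longer goes through verbatim, since $|\cP|=q_0^{r-1}\cdot\frac{q_0^{2r}-1}{q_0^2-1}$ is even here; one only obtains $\gcd(k,1+st)\mid 2$, which still suffices but requires a small extra argument. The paper glosses over this as well.

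The genuine gap is in your $r=2$ subcase. First, your parametrisation $s+1=2k$, $t+1=q_0k$ is not the most general: since $q_0$ is even the ratio $\tfrac{2}{q_0}$ reduces to $\tfrac{1}{q_0/2}$, so $s+1$ need not be even. More seriously, the bound ``$k^2<q_0$ derived from $1+st>1+s$'' is false with your parametrisation: from $1+st=\tfrac{q_0(q_0^2+1)}{2k}>2k$ one gets only $4k^2<q_0(q_0^2+1)$, roughly $k<q_0^{3/2}$, which does not bound $q_0$ at all. (The bound $k^2<q_0$ would follow if one used the Lemma~\ref{f} parametrisation $s+1=(q_0+1)k$, but for $r=2$ that is illegitimate because $q_0+1$ cancels out of the reduced fraction.) The paper instead uses Higman's inequality in the opposite direction: writing $q_0=2^n$ and $k:=s+1$, so $t+1=2^{n-1}k$, the bound $t+1<(s+1)^2$ gives the \emph{lower} bound $k>2^{n-1}$; feeding the resulting lower bounds $s\ge 2^{n-1}$, $t\ge 2^{2n-2}+2^{n-1}-1$ into $|\cP|=2^n(2^{2n}+1)$ forces $n\le 3$, and $q_0\in\{4,8\}$ are dispatched by a direct check.
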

\begin{proof}
Suppose to the contrary that $M_0$ is in Case 7 of Table \ref{values}. Then $M_0=\mathrm{PGL}(2,q_{0})$, where $q=q_{0}^{r}$ is even, $r$ is a prime and $q_{0} \neq 2$.  Since we have assumed that $M_1$ has larger Case numbering in Table \ref{values}, $M_1$ can only be in Case 8 or Case 9 of Table \ref{values}. The case $r\ge 3$ are dealt with in the same way as in the proof of Lemma \ref{f}, so we omit the details here. Suppose that $r=2$, and we only give the details for Case 8 since the other case is similar. Write $q_0=2^n$, $n\ge 2$. We have $|\cP|=(1+s)(1+st)=q_0(q_0^2+1)$, $|\cL|=(1+t)(1+st)=\frac{1}{2}q_0^2(q_0^2+1)$, so $\frac{t+1}{s+1}=2^{n-1}$. Write $k=s+1$, so that $t+1=2^{n-1}k$. By Lemma \ref{HD} (i) we have $t+1<(s+1)^2$, so $k>2^{n-1}$. It follows that $s\ge 2^{n-1}$, $t\ge 2^{2n-2}+2^{n-1}-1$. We thus have
\[
2^n(2^{2n}+1)=|\cP|\ge(2^{n-1}+1)(1+2^{n-1}(2^{2n-2}+2^{n-1}-1)).
\]
It holds only if $n=2,3$. We check with computer that there are no $(s,t)$ pairs that satisfy \eqref{point} and \eqref{line} for each such $n$. This completes the proof.
\end{proof}

\begin{lemma}\label{h}
The subgroup $M_{0}$ is not in Case 8 of Table \ref{values}.
\end{lemma}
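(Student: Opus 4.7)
The plan is to exploit the tight arithmetic that Cases 8 and 9 impose on $(s,t)$.  Since $M_0$ lies in Case 8 and, by the convention of this subsection, has smaller Case numbering than $M_1$, the only remaining option is $M_1$ in Case 9 of Table~\ref{values}.  Setting $e=\gcd(2,q-1)$, the orders $|M_0|=2(q-1)/e$ and $|M_1|=2(q+1)/e$ give $|\cP|=q(q+1)/2$ and $|\cL|=q(q-1)/2$, independently of the parity of $q$.  Then \eqref{point} and \eqref{line} yield
\[
\frac{s+1}{t+1}=\frac{|M_1|}{|M_0|}=\frac{q+1}{q-1}.
\]

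The next step is to parameterize $s+1$ and $t+1$ according to the parity of $q$.  Since $\gcd(q+1,q-1)$ equals $2$ when $q$ is odd and $1$ when $q$ is even, there exists a positive integer $m$ such that: if $q$ is odd, $s+1=\tfrac{q+1}{2}m$ and $t+1=\tfrac{q-1}{2}m$, and substituting into \eqref{point} gives $st+1=q/m$ (so $m\mid q$); if $q$ is even, $s+1=(q+1)m$ and $t+1=(q-1)m$, and \eqref{point} gives $st+1=q/(2m)$ (so $2m\mid q$).

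The crux is the elementary inequality $s+t\leq st$, valid whenever $s,t\geq 2$ because $(s-1)(t-1)\geq 1$.  Applied to the odd case, it becomes $qm-2\leq q/m-1$, i.e.\ $q(m^2-1)\leq m$; since $q\geq 13$ (Case 8 excludes $q\in\{5,7,9,11\}$), this forces $m=1$, and then $s=(q-1)/2$, $t=(q-3)/2$ must satisfy $st=q-1$, which simplifies to $(q-1)(q-3)=4(q-1)$ and yields $q=7$---a contradiction.  Applied to the even case, it becomes $2qm-q/(2m)\leq 1$, which is violated for every $q\geq 4$ and $m\geq 1$ since the left-hand side is minimized at $m=1$ and equals $3q/2\geq 6$ there.

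The proof is essentially mechanical once the parameterization is in place, and I expect no serious obstacle: the only care required is separating the two parities of $q$, since the factor $e=\gcd(2,q-1)$ changes the orders of the dihedral subgroups and hence the precise form of $m$.  The underlying reason it works is the mismatch between the linear-in-$m$ growth of $s+t$ and the inverse-in-$m$ decay of $st$, which is already too severe for the smallest admissible value $q\geq 13$ (odd) or $q\geq 4$ (even).
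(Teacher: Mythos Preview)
Your proof is correct and arrives at the same parameterization as the paper---both obtain $s+1=\tfrac{q+1}{2}m$, $t+1=\tfrac{q-1}{2}m$, $st+1=q/m$ in the odd case (the paper writes $m=p^h$ after observing $(s-t)(st+1)=q$), and analogously for $q$ even. The difference lies in how the parameterization is killed off. The paper invokes the divisibility condition of Lemma~\ref{HD}(ii), i.e.\ $s+t\mid st(s+1)(t+1)$, and argues that $p^{f+h}-2$ must divide $(p^{f-h}-1)p^{f-h}$, then splits on the parity of $p$. You instead use the elementary inequality $s+t\le st$ (equivalently $(s-1)(t-1)\ge 1$), which for $m\ge 2$ immediately gives the absurdity $q(m^2-1)\le m$, and for $m=1$ forces $q=7$, excluded by the side condition on Case~8. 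Your route is shorter and more transparent: it replaces a divisibility analysis with a single size comparison, and it handles both parities of $q$ uniformly by the same mechanism (the linear growth of $s+t$ in $m$ versus the $1/m$ decay of $st$). The paper's approach, by contrast, exploits the structural divisibility condition explicitly, which is closer in spirit to the other case analyses in the section but is heavier machinery than the problem requires here.
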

\begin{proof}
Suppose to the contrary that $M_{0}=\mathrm{D}_{2(q-1) / \gcd(2, q-1)}$. Since we have assumed that $M_1$ has larger Case numbering in Table \ref{values}, we have $M_1=\mathrm{D}_{2(q+1) / \gcd(2, q-1)}$.
We have $|\cP|=(1+s)(1+st)=\frac{1}{2}q(q+1)$, $|\cL|=(1+t)(1+st)=\frac{1}{2}q(q-1)$ by \eqref{point} and \eqref{line}. It follows that  $(s-t)(st+1)=q$, so $s-t$  and $(st+1)$ are both powers of $p$. Write $s-t=p^{h}$ and $st+1=p^{f-h}$, where $q=p^{f}$. Since $1+st>1$, we have $h<f$. We thus have $1+s=\frac{1}{2}p^h(q+1)$, $1+t=\frac{1}{2}p^h(q-1)$, and so $s+t=p^{f+h}-2$. By Lemma \ref{HD}(ii) $s+t$ divides $st(st+1)$, so  $p^{f+h}-2$ divides $(p^{f-h}-1)p^{f-h}$.
\begin{enumerate}
\item[(1)] If $p$ is odd, then  $p^{f+h}-2$ is relatively prime to $p$ and so divides $p^{f-h}-1$. Since $h<f$, $p^{f-h}-1$ is positive and so $p^{f-h}-1\ge p^{f+h}-2$. This inequality holds only if $h=0$, but then $p^{f}-2$ does not divides $p^{f}-1$ by the fact $p^f\ge 13$.
\item[(2)] If $p=2$, then $2^{f+h-1}-1$ divides $2^{f-h}-1$. The latter is positive by the fact $h<f$. It follows that $2^{f-h}-1\ge 2^{f+h-1}-1$, which implies that $h=0$. Then $2^{f-1}-1$ divides $2^f-1$, and we deduce that $2^f=4$. Then $s+t=2^f-2=2$, which contradicts the fact that $s,t\ge 2$.
\end{enumerate}
This completes the proof.
\end{proof}

\subsection{$M_0,M_1$ have the same Case numberings in Table \ref{values}}\label{seqt}

In this subsection, we suppose that $M_0,M_1$ have the same Case numbering in Table \ref{values}. We start by showing that $M_0$ and $M_1$ are isomorphic groups.

\begin{lemma}\label{PSL_67}
The groups $M_0$ and $M_1$ are isomorphic.
\end{lemma}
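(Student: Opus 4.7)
The proof proceeds case by case according to the common Case numbering of $M_0$ and $M_1$ in Table~\ref{values}. In Cases $3$, $4$, $5$ the subgroup is forced to be $\mathrm{A}_5$, $\mathrm{A}_4$, $\mathrm{S}_4$ respectively, and in Cases $8$, $9$ it is a dihedral group whose isomorphism type depends only on $q$; in all five of these situations $M_0\cong M_1$ is immediate. In Case~$2$, the condition $q=q_0^2$ with $q$ odd pins down $q_0=\sqrt q$ uniquely, so both $M_0$ and $M_1$ are isomorphic to $\mathrm{PGL}(2,\sqrt q)$.

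The genuinely nontrivial situations are Cases~$6$ and~$7$, which I would treat together. In Case~$6$ we have $M_i\cong\mathrm{PSL}(2,q_{0,i})$ with $q=q_{0,i}^{r_i}$ for odd primes $r_0,r_1$, and in Case~$7$ we have $M_i\cong\mathrm{PGL}(2,q_{0,i})$ with $q=2^f=q_{0,i}^{r_i}$, $r_i$ prime and $q_{0,i}\neq 2$. Since the function $x\mapsto x(x^2-1)$ is strictly increasing on the positive integers, $M_0\cong M_1$ is equivalent to $q_{0,0}=q_{0,1}$, equivalently to $r_0=r_1$. I would argue this by contradiction: suppose $r_0\neq r_1$, and by the point-line duality assume $r_0<r_1$; then $q_{0,0}>q_{0,1}$, $|M_0|>|M_1|$, $s<t$, and
\[
\lambda:=\frac{t+1}{s+1}=\frac{|M_0|}{|M_1|}=\frac{q_{0,0}(q_{0,0}^2-1)}{q_{0,1}(q_{0,1}^2-1)}.
\]

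To derive a contradiction I would combine four ingredients. First, Higman's inequality $t\leq s^2$ together with $t+1=\lambda(s+1)$ forces $s\geq\lambda-O(1)$, so $s$ is large; second, the identity $(s+1)(st+1)=|\cP|=q(q^2-1)/|M_0|$ gives a cubic in $s$ with leading coefficient $\lambda$, essentially determining $s$ up to lower-order corrections. Third, I would compare the $p$-valuations (with $p$ the characteristic of $\F_q$) of the two sides, which heavily constrains the $p$-part of $s+1$ and of $st+1$. Finally, I would reduce the identity
\[
(t+1)\,q_{0,1}(q_{0,1}^2-1)=(s+1)\,q_{0,0}(q_{0,0}^2-1)
\]
modulo successive powers of $q_{0,1}$ in the style of the proof of Lemma~\ref{f}: writing $s+1=a_0+a_1q_{0,1}+a_2q_{0,1}^2+\cdots$ and determining the digits $a_i$ by matching the two sides. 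This should force $s+1$ into such a specific shape that it cannot simultaneously absorb the required $p$-part of $|\cP|$, producing an arithmetic contradiction for all but finitely many triples $(p,r_0,r_1)$. The finite residual cases I would dispatch by direct computation with Magma~\cite{Bosma}.

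The main obstacle is that for small primes $r_0<r_1$ and large $f$ the Higman bound and Lemma~\ref{HD}(iii) both have enormous slack, so a contradiction cannot come from size estimates alone; it must be extracted from the fine modular analysis in the style of Lemmas~\ref{f} and~\ref{g}. Case~$7$ additionally admits the possibility $r_0=2$, which would be handled by the direct expansion argument used near the end of the proof of Lemma~\ref{g}.
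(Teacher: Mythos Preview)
Your reduction to Cases~6 and~7 is correct: in every other case the isomorphism type of $M_i$ is determined by $q$ alone, so $M_0\cong M_1$ is automatic. The issue is that for Cases~6 and~7 you have not given a proof, only a programme. The four ingredients you list are reasonable heuristics, but the crucial step---forcing the $q_{0,1}$-adic digits of $s+1$ into an impossible shape---is asserted with ``should'' rather than carried out. Unlike Lemma~\ref{f}, where one side of the key ratio was a dihedral group of very simple order and a preliminary bound cut $r$ down to $\{3,5,7\}$, here \emph{both} $M_i$ are of $\mathrm{PSL}$-type, the ratio $\lambda=|M_0|/|M_1|$ involves two cyclotomic-like factors, and for any fixed pair $(r_0,r_1)$ the base $m$ in $q=m^{r_0r_1}$ is unbounded. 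Your digit-matching would have to succeed \emph{uniformly} in $m$, and you have not shown that it does; nor is it clear that the residual ``finite'' set really is finite, since no analogue of the bound $r\le 7$ is available to you.

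The paper avoids this Diophantine quagmire entirely. It observes that when $r_0\ne r_1$ one can write $q=m^{r_0r_1}$ and, after a conjugation, arrange that $M_0$ and $M_1$ share a common subgroup $H\cong\mathrm{PSL}(2,m)$. Picking an involution $g\in H$, one computes $|\mathcal P_g|$ and $|\mathcal L_g|$ via Lemma~\ref{fixedpoints} and checks that $C_X(g)$ is transitive on each, so $\mathcal S_g$ is a thick sub-quadrangle by Corollary~\ref{substructure}. But $C_X(g)$ is dihedral (or elementary abelian in characteristic $2$), and its index-$2$ cyclic subgroup $K$ is already transitive on both $\mathcal P_g$ and $\mathcal L_g$; this contradicts Lemma~\ref{abelian}. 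The even-$q$ case needs a short separate argument to rule out $s'=1$ or $t'=1$. This fixed-structure method is short, uniform in $m$, and requires no case-by-case arithmetic; it is also the template the paper reuses later for Cases~3--6 in Lemma~\ref{regulargroup}, so it is worth learning.
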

\begin{proof}
Suppose to the contrary that $M_0$ and $M_1$ are not isomorphic. This can happen only if they are both in Case 6 or Case 7 in Table \ref{values}. In both cases, $M_0\cong\mathrm{PSL}(2,q_{0})$ and $M_1\cong\mathrm{PSL}(2,q_{1})$, where $q=q_{0}^{r_{0}}=q_{1}^{r_{1}}$ and $r_{0}$, $r_{1}$ are distinct primes. Moreover, if $q$ is odd, then both $r_0,r_1$ are odd; if $q$ is even, then $q_0,q_1>2$.
There exists a prime power $m$ such that $q=m^{r_0r_1}$, and so $q_{0}=m^{r_{1}}$ and $q_{1}=m^{r_{0}}$.

There is a unique conjugacy class of subgroups isomorphic to $\PSL(2,m)$ in $\PSL(2,m^u)$ by \cite{Dickson} or \cite[Theorem 2.1(o)(p)]{King}, where $u$ is odd if $m$ is odd. For each $i=0,1$, take a subgroup $H_{i}$ isomorphic to $\mathrm{PSL}(2,m)$ in $M_{i}$. There exists $h\in\mathrm{PSL}(2,q)$ such that $H_{0}=H_{1}^{h}$. The stabilizer of $\ell^h$ in $X$ is $M_1^h$ which contains $H_0$. By choosing the line $\ell^h$ instead of $\ell$ in the first place, we assume without loss of generality that $M_0,M_1$ both contain a subgroup $H$ isomorphic to $\mathrm{PSL}(2,m)$. Let $g$ be an involution in $H$. It is clear that $g^X\cap Y=g^Y$ for $Y\in\{M_0,M_1\}$ since each of $M_0,M_1$ has a single conjugacy class of involutions by Lemma \ref{conjugacy}. Let $\cS_g=(\cP_g,\cL_g)$ be the fixed structure of $g$.

First suppose that $q$ is odd. We only give details for the case $m\equiv 3\pmod{4}$ here, and the case $m\equiv 1\pmod{4}$ is dealt with similarly. In this case, $q,q_0,q_1$ are all congruent to $3$ modulo $4$. We have $C_X(g)=\mathrm{D}_{q+1}$, $C_X(g)\cap M_0=\mathrm{D}_{q_0+1}$, $C_X(g)\cap M_1=\mathrm{D}_{q_1+1}$ by Lemma \ref{conjugacy}. We thus have  $|\mathcal{P}_g|=\frac{q+1}{q_{0}+1}$, $|\mathcal{L}_{g}|=\frac{q+1}{q_{1}+1}$ by Lemma \ref{fixedpoints}. By Lemma \ref{transitiveaction}, $C_X(g)$ acts transitively on $\mathcal{P}_g$ and $\mathcal{L}_{g}$. By Corollary \ref{substructure}, $\mathcal{S}_g$ is a generalized quadrangle of order $(s',t')$. Since both $|\cP_g|, |\cL_g|$ are odd integers, $s'$, $t'$ are both even and $\cS_g$ is thick. Let $K$ be the unique cyclic subgroup of index $2$ in $C_X(g)$, so that it intersects $M_i$ in a cyclic subgroup of order dividing $\frac{q_i+1}{2}$, $i=0,1$. Since the orbit $\alpha^K$ has size $[K:\,K\cap M_0]$ which is at most $|\cP_g|$, we deduce that $|K\cap M_0|=\frac{q_0+1}{2}$ and $K$ is transitive on $\cP_g$. Similarly, we deduce that $K$ is transitive on $\cL_g$. This is impossible by Lemma \ref{abelian}.

Next suppose that $q$ is even. Then $C_X(g)$, $C_X(g)\cap M_0$, $C_X(g)\cap M_1$ are elementary abelian $2$-groups of order $q,q_0,q_1$ respectively by Lemma \ref{conjugacy}. We thus have  $|\mathcal{P}_g|=q/q_0$, $|\mathcal{L}_{g}|=q/q_1$ by Lemma \ref{fixedpoints}. By Lemma \ref{transitiveaction}, $C_X(g)$ acts transitively on $\mathcal{P}_g$ and $\mathcal{L}_{g}$. By Corollary \ref{substructure}, $\mathcal{S}_g$ is a generalized quadrangle of order $(s',t')$. If $t'=1$, then $2(s'+1)=m^{r_0(r_1-1)}$, $(s'+1)^2=m^{r_1(r_0-1)}$. It follows that $m^{2r_0(r_1-1)}=4m^{r_1(r_0-1)}$. With $m=2^e$, we deduce that $2+r_1(r_0-1)e=2r_0(r_1-1)e$. It follows that $e$ divides $2$. If $e=2$, then  $r_0(r_1-2)+(r_1-1)=0$ which never holds.
We thus have $e=1$. Then $(r_1-2)(r_0+1)=0$, and so $r_1=2$. By Lemma \ref{HD} (iii), we have $|M_1|^4<|M_0|^3\cdot|X|$ which simplifies to $q_1^2(q_1^2-1)^3<60^3(q_1^2+1)$. It does not hold for $q_1=2^{r_{0}}>4$. Therefore, $t'>1$. Dually, we have $s'>1$, so $\cS_g$ is thick. This contradicts Lemma \ref{abelian} and completes the proof.
\end{proof}

By Lemma \ref{PSL_67}, we assume that $M_0$ and $M_1$ are isomorphic in the sequel. In particular, we have $s=t$. By the arguments in Section \ref{model}, there exist elements $h_1,\cdots, h_d$ of $X$ such that the $M_0h_iM_1$'s are pairwise distinct and
\begin{equation}\label{doublecoset}
s+1=\sum\limits_{i=1}^{d} \frac{|M_{1}|}{|M_{1}\cap h_{i}^{-1}M_{0}h_{i}|}.
\end{equation}
\begin{lemma}\label{lem_M0Case2}
If $M_{0}$ is in Case 2 of Table \ref{values}, then $q=9$ and  $\mathcal{S}$ is $W(2)$.
\end{lemma}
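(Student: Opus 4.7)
Since $M_0\cong M_1\cong\mathrm{PGL}(2,q_0)$, we have $s=t$, and combining \eqref{point} and \eqref{line} gives
\[
(s+1)(s^2+1)=\tfrac{1}{2}q_0(q_0^2+1).
\]
A direct comparison shows $s\leq q_0-1$, and substituting $s=q_0-1$ into this equation reduces to $(q_0-1)(q_0-3)=0$, so $s=q_0-1$ forces $q_0=3$. In that case $s=t=2$ and $\cS$ is the unique generalized quadrangle of order $2$, namely $W(2)$, yielding the conclusion. Hence I may assume $q_0\geq 5$, so $s\leq q_0-2$.

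The plan is then to take an involution $g\in X$ and analyze the fixed substructure $\cS_g$. Since $q=q_0^2\equiv 1\pmod 4$, Lemma~\ref{conjugacy} provides a unique $X$-conjugacy class of involutions, of size $\tfrac{1}{2}q_0^2(q_0^2+1)$. The group $M_0\cong\mathrm{PGL}(2,q_0)$ contains two $M_0$-classes of involutions of sizes $\tfrac{1}{2}q_0(q_0-1)$ and $\tfrac{1}{2}q_0(q_0+1)$, totalling $q_0^2$ involutions, all fused in the unique $X$-class. Applying Lemma~\ref{fixedpoints} will then give $|\cP_g|=|\cL_g|=q_0$.

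The core of the proof is the case analysis of Lemma~\ref{subquadrangle} applied to $\cS_g$. Cases (0), (1), (1') are ruled out by $|\cP_g|,|\cL_g|\geq 3$. Cases (2), (2') would place a fixed point (resp.\ line) on $q_0$ fixed lines (resp.\ points), contradicting $s+1\leq q_0-1$. Cases (3), (3') yield $(s_1+1)(s_2+1)=s_1+s_2+2$, whose only integer solution $s_1=s_2=1$ forces $q_0=4$, contradicting both the parity of $q_0$ and the requirement $s_1<s_2$. Thus $\cS_g$ must be a subquadrangle of order $(s',t')$, and $|\cP_g|=|\cL_g|$ forces $s'=t'=:s''$ with $q_0=(s''+1)(s''^2+1)$. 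If $s''$ is odd then both factors are even and $q_0$ is even, a contradiction. If $s''$ is even then $\gcd(s''+1,s''^2+1)=1$, and for $s''\geq 2$ both coprime factors are $\geq 3$, which is incompatible with $q_0$ being a prime power. The residual values $s''\in\{0,1\}$ give $q_0\in\{1,4\}$, both impossible. This rules out $q_0\geq 5$ and completes the argument.

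I expect the main obstacle to be the full case-by-case elimination in Lemma~\ref{subquadrangle}. In particular, $C_X(g)$ is \emph{not} transitive on $\cP_g$ (it has two orbits, of sizes $\tfrac{q_0+1}{2}$ and $\tfrac{q_0-1}{2}$, coming from the split and non-split $M_0$-classes of involutions), so Corollary~\ref{substructure} cannot be invoked to jump directly to case (4), and each alternative case of Lemma~\ref{subquadrangle} must be discarded by direct parameter comparison. Once case (4) is reached, the final arithmetic obstruction arising from $q_0=p^f$ being a prime power is short.
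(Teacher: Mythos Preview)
Your argument is correct and takes a genuinely different route from the paper's. The paper does not use the fixed substructure of an involution here; instead, after deriving $(s+1)(s^2+1)=\tfrac12 q_0(q_0^2+1)$ and disposing of $q_0\in\{3,5\}$, it assumes $q_0\ge 7$ and invokes the coset geometry model: in the decomposition $s+1=\sum_i [M_1:M_1\cap M_0^{h_i}]$ each summand is at most $q_0$, and by the subgroup list of $\mathrm{PGL}(2,q_0)$ the only subgroups of index $\le q_0$ are $\mathrm{PSL}(2,q_0)$ and $\mathrm{PGL}(2,q_0)$; the first is eliminated via normalizers, forcing $M_1=M_0^{h_i}$ for every $i$, which collapses all the double cosets to a single one and gives a contradiction. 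Your approach instead computes $|\cP_g|=|\cL_g|=q_0$ for an involution $g$ and runs through Lemma~\ref{subquadrangle} by hand, correctly noting that Corollary~\ref{substructure} is unavailable because $C_X(g)$ has two orbits on $\cP_g$; the bound $s+1\le q_0-1$ kills cases~(2) and~(2$'$), the usual $s_1s_2=1$ argument kills~(3) and~(3$'$), and the final obstruction in case~(4) comes from the prime-power nature of $q_0$, since for even $s''\ge 2$ the coprime factors $s''+1$ and $s''^{2}+1$ cannot both be nontrivial powers of the same prime. The paper's method is more structural and exploits the double-coset machinery set up in Section~\ref{model} together with the subgroup lattice of $\mathrm{PGL}(2,q_0)$; yours is more combinatorial and self-contained, trading that machinery for a direct case analysis and an arithmetic endgame that hinges on $q_0=p^n$.
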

\begin{proof}
Suppose that $M_{0}=\mathrm{PGL}(2,q_{0})$ with $q=q_{0}^{2}$ odd. Write $q_{0}=p^{n}$ with $p$ prime. By \eqref{point}, we have
\begin{equation}\label{bb}
(s+1)(s^{2}+1)=\frac{1}{2}q_{0}(q_{0}^{2}+1).
\end{equation}
If $q_0=5$, there is no integer solution in $s$. If $q_0=3$, then $s=2$. By \cite[5.2.3]{Payne}, up to isomorphism $W(2)$ is the only generalized quadrangle of order 2. The group $\mathrm{PSL}(2,9)\cong\mathrm{PSp}(4,2)'$ acts transitively on the points of $W(2)$, and the stabilizer of a point is a maximal subgroup isomorphic to $\mathrm{PGL}(2,3)$. We assume that $q_{0}\geq7$ in the sequel. It follows  from \eqref{bb} that $s<q_0$. Each summand $\frac{|M_{1}|}{|M_{1}\cap h_{i}^{-1}M_{0}h_{i}|}$ in \eqref{doublecoset} is thus no more than $q_0$, i.e., $[M_{1}:\,M_{1}\cap M_{0}^{h_i}]\le q_0$ for $1\le i\le d$. We examine the subgroups of $M_1\cong\mathrm{PGL}(2,q_{0})$ as listed in Lemma \ref{subgroupPGL}, and deduce that a subgroup of index at most $q_0$ is either  $\mathrm{PSL}\left(2, q_{0}\right)$ or $\mathrm{PGL}\left(2, q_{0}\right)$. If $M_{1}\cap M_{0}^{h_i}=\mathrm{PSL}\left(2, q_{0}\right)$, then it has index $2$ and  is thus normal in both $M_{1}$ and $M_{0}^{h_i}$. That is, $M_{1}$ and $M_{0}^{h_i}$ are both in $N_{X}(\mathrm{PSL}(2, q_0))=\mathrm{PGL}(2,q_0)$. We deduce that $M_{1}=M_{0}^{h_i}=\mathrm{PGL}(2,q_0)$ by comparing sizes: a contradiction to $M_{1}\cap M_{0}^{h_i}=\mathrm{PSL}\left(2, q_{0}\right)$. Therefore, we must have $M_{1}\cap M_{0}^{h_i}=\mathrm{PGL}\left(2, q_{0}\right)$, i.e., $M_{1}=M_{0}^{h_i}$,  for each $i$. It follows that $d=s+1$, and $h_ih_1^{-1}$ is in $N_{X}(M_0)=M_0$. Therefore, $M_0h_1=M_0h_i$, for $1\le i\le s+1$. This contradicts the fact that the $M_0h_iM_1$'s are distinct double cosets.
\end{proof}

\begin{lemma}\label{gg}
The subgroup $M_{0}$ is not in Case 7 of Table \ref{values}.
\end{lemma}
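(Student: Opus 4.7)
The strategy is to mimic the fixed-involution argument of Lemma \ref{PSL_67}. Suppose for contradiction that $M_0 \cong M_1 \cong \mathrm{PGL}(2,q_0) = \mathrm{PSL}(2,q_0)$ (the equality because $q_0 = 2^n$ is even) with $q = q_0^r$, $r$ prime, $q_0 \geq 4$. Since $|M_0| = |M_1|$ we have $s = t$. By Lemma \ref{conjugacy}, $X$ has a single conjugacy class of involutions, and each $M_i$ contains exactly $q_0^2 - 1$ involutions forming one $M_i$-class. For any involution $h \in X$, applying Lemma \ref{fixedpoints} to the actions on $\cP$ and on $\cL$ yields
\[
|\cP_h| = |\cL_h| = \frac{|\cP|(q_0^2 - 1)}{q^2 - 1} = \frac{q}{q_0} = q_0^{r-1} \geq 4.
\]
Replacing $(\alpha, \ell)$ by a suitable $G$-translate, I may therefore assume $h \in M_0 \cap M_1$, so that $\alpha \in \cP_h$ and $\ell \in \cL_h$.

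Next, Lemma \ref{conjugacy} gives that $C_X(h)$ is elementary abelian of order $q$, and each $C_{M_i}(h)$ is elementary abelian of order $q_0$, whence $[C_X(h) : C_X(h) \cap M_i] = q_0^{r-1} = |\cP_h| = |\cL_h|$. Lemma \ref{transitiveaction} then shows that $C_X(h)$ acts transitively on both $\cP_h$ and $\cL_h$, and Corollary \ref{substructure}(i) forces $\cS_h$ to be a generalized quadrangle of some order $(s', t')$. The equality $|\cP_h| = |\cL_h|$ immediately yields $s' = t'$.

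The proof finishes with a case split. If $s' = t' \geq 2$, then $\cS_h$ is thick and admits the abelian group $C_X(h)$ as a transitive automorphism group on both its points and its lines, contradicting Lemma \ref{abelian}. Otherwise $s' = t' = 1$, giving $q_0^{r-1} = 4$; since $q_0 \neq 2$, the only possibility is $(q_0, r) = (4, 2)$, so $q = 16$. This last case is eliminated by the direct computation $|\cP| = |X|/|M_0| = 4080/60 = 68$, together with the observation that $(s+1)(s^2+1) = 68$ has no integer solution with $s \geq 2$ (the values at $s = 2, 3, 4$ being $15, 40, 85$). I expect the main hurdle to be the degenerate branch $s' = t' = 1$, where Lemma \ref{abelian} gives no information because a $(1,1)$-quadrangle is not thick; that loophole is closed only by the explicit arithmetic check at $q = 16$.
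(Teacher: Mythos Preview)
Your proof is correct but follows a genuinely different route from the paper. The paper argues purely arithmetically: from \eqref{point} one gets $(s+1)(s^{2}+1)=\frac{q_{0}^{2r}-1}{q_{0}^{2}-1}\,q_{0}^{r-1}$; since the right side is even, $s$ is odd, so $s^{2}+1\equiv 2\pmod 4$ and hence $\tfrac12 q_{0}^{r-1}\mid s+1$. Writing $s+1=\tfrac12 q_{0}^{r-1}t$ with $t$ odd, one checks the left side overshoots for $t\ge 3$ and undershoots for $t=1$, so there is no integer $s$.

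Your argument instead mirrors the fixed-structure technique used in Lemma \ref{PSL_67} and Lemma \ref{regulargroup}: for an involution $h$ you compute $|\cP_h|=|\cL_h|=q_{0}^{r-1}$, observe that $C_X(h)\cong\mathrm{C}_2^{f}$ is abelian and transitive on both, and invoke Lemma \ref{abelian} to kill the thick case $s'\ge 2$, leaving only the degenerate $s'=t'=1$ branch $q_{0}^{r-1}=4$, i.e.\ $q=16$, which fails the point-count. The paper could not have folded Case 7 into the argument of Lemma \ref{regulargroup} as written, because there the contradiction comes from a large \emph{cyclic} subgroup $K\le C_X(g)$ acting regularly, and for even $q$ the centralizer is elementary abelian with no such $K$; you sidestep this neatly by using Lemma \ref{abelian} directly. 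Your approach is thus more uniform with the surrounding lemmas, at the cost of the extra $q=16$ check, while the paper's two-line $2$-adic argument is shorter and entirely self-contained.
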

\begin{proof}
Suppose to the contrary that $M_{0}=\mathrm{PGL}(2,q_{0})$, where $q=2^{f}=q_{0}^{r}$ with $r$ prime and $q_{0}\neq2$. From \eqref{point}, we have
\begin{equation}\label{ggp}
(s+1)(s^{2}+1)=\frac{(q_{0}^{2r}-1)}{q_{0}^{2}-1}q_{0}^{r-1}.
\end{equation}
The right hand side is even, so $s$ is odd. We have  $s^{2}+1\equiv 2\pmod {4}$, so $\frac{1}{2}q_{0}^{r-1}$ divides  $s+1$. There is an odd integer $t$ such that $s+1=\frac{1}{2}q_{0}^{r-1}t$. The left hand side of \eqref{ggp} is larger than or smaller than the right hand side according as $t\ge 3$ or $t=1$ upon inspection. Therefore, \eqref{ggp} has no integer solution in $s$. This completes the proof.
\end{proof}

\begin{lemma}\label{hh}
The subgroup $M_{0}$ is not in Case 8 of Table \ref{values}.
\end{lemma}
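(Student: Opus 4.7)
The plan is to derive a contradiction from the assumption $M_0=\mathrm{D}_{2(q-1)/\gcd(2,q-1)}$. Since $M_0$ and $M_1$ share a Case numbering in Table \ref{values}, Lemma \ref{PSL_67} gives $M_1\cong M_0$, so $|M_0|=|M_1|$ forces $s=t$, and \eqref{point} becomes
\[
(s+1)(s^2+1)=\frac{q(q+1)}{2}.
\]
My strategy is to use the coset geometry of Section \ref{model} together with the natural action of $X=\PSL(2,q)$ on the projective line $\PG(1,q)$ to force $s+1$ to be of order at least $q$, which is incompatible with the cubic-versus-quadratic growth in the displayed equation.

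The key geometric input is that every conjugate of $M_0$ in $X$ is the setwise stabilizer of an unordered pair of distinct points of $\PG(1,q)$. For $h\in X$ with $h^{-1}M_0 h\neq M_1$, the intersection $M_1\cap h^{-1}M_0 h$ is the common setwise stabilizer of two distinct such pairs, hence acts on the set of at most four points of $\PG(1,q)$ in their union. An element acting trivially on this set pointwise would fix three distinct points of $\PG(1,q)$, contradicting the sharp $3$-transitivity of $\PGL(2,q)$. Hence the intersection injects into the Klein four group of permutations that stabilize each pair setwise, yielding $|M_1\cap h^{-1}M_0 h|\leq 4$.

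Combining this with the double-coset formula \eqref{doublecoset}, each summand $|M_1|/|M_1\cap h_i^{-1}M_0 h_i|$ equals $1$ (when $h_i^{-1}M_0 h_i=M_1$) or is at least $|M_1|/4$. Since $M_0$ is maximal in the simple group $X$ we have $N_X(M_0)=M_0$, and the set of $h\in X$ with $h^{-1}M_0 h=M_1$ is a single left coset of $M_0$; this contributes at most one ``$=1$'' summand. Since $s+1\geq 3$, at least one summand of size $\geq |M_1|/4$ occurs, so $s+1\geq |M_1|/4$; that is, $s+1\geq (q-1)/4$ if $q$ is odd and $s+1\geq (q-1)/2$ if $q$ is even. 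Substituting into $(s+1)(s^2+1)=q(q+1)/2$ yields a cubic polynomial inequality in $q$ that fails for all $q$ beyond a small bound (roughly $q\geq 43$ in the odd case and $q\geq 16$ in the even case), reducing the problem to a finite list of small prime powers. The remaining cases (with $q\geq 4$ and $q\notin\{5,7,9,11\}$ as in Case $8$) are eliminated by direct numerical verification that $(s+1)(s^2+1)=q(q+1)/2$ admits no integer solution with $s\geq 2$. The main technical step will be the intersection bound together with the bookkeeping of the ``$=1$'' contribution; the resulting polynomial inequality and the small-$q$ check should then be routine.
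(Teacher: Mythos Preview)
Your argument is correct and takes a genuinely different route from the paper's proof. The paper works purely arithmetically with the equation $(s+1)(s^2+1)=\tfrac{1}{2}q(q+1)$: for $q$ odd it shows $q\mid s^2+1$, hence $s+1\mid\tfrac{q+1}{2}$, and then a discriminant analysis on the resulting quadratic in $s$ forces $k=1$, $s=2$, $q=5$, which is excluded; for $q$ even it uses the $2$-adic valuation of both sides to pin down $s+1=2^{f-2}a$ and reaches a contradiction directly. No coset geometry or group action enters.

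Your approach instead exploits the realisation of each conjugate of $M_0$ as the setwise stabiliser of an unordered pair in $\PG(1,q)$, and the fact that two distinct such stabilisers intersect in a group of order at most $4$ (your case analysis of overlapping versus disjoint pairs, together with the fact that an element of $\PGL(2,q)$ fixing three points is trivial, is sound). Feeding this into the double-coset formula \eqref{doublecoset} gives $s+1\ge |M_1|/4$, which is a cubic-versus-quadratic obstruction leaving only a short list of small $q$ to check numerically. This is the same flavour of argument the paper uses in Lemma \ref{lem_M0Case2} (Case 2), so it fits naturally with Section \ref{model}; what you gain is a uniform structural reason for the bound, at the cost of a residual finite check that the paper's arithmetic approach avoids entirely. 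One minor point: your phrase ``injects into the Klein four group'' is literally accurate only when the two pairs are disjoint; when they share a point the intersection is in fact trivial, but of course this still gives $|M_1\cap h^{-1}M_0h|\le 4$.
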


\begin{proof}

Suppose to the contrary that $M_0=\mathrm{D}_{2(q-1)/\gcd(2,q-1)}$, where $q\ge 13$ if $q$ is odd. From \eqref{point}, we have
\begin{equation}\label{hhp}
(s+1)(s^{2}+1)=\frac{1}{2} q(q+1).
\end{equation}

First suppose that $q$ is odd. We deduce from \eqref{hhp} that $s<q-1$. In particular, $q$ does not divide $s+1$. Since $\gcd(s+1,s^{2}+1)$ is at most $2$, we deduce from \eqref{hhp} that $q$ divides $s^2+1$. It then follows that $s+1$ divides $\frac{1}{2}(q+1)$. Write $\frac{1}{2}(q+1)=k(s+1)$ for some integer $k$. Then $q=2ks+2k-1$, and \eqref{hhp} yields $s^{2}+1=(2ks+2k-1)k$.  As a quadratic equation in $s$, it has an integer solution. Hence  its discriminant $\Delta:=4k^{4}+8k^{2}-4k-4$ is an even square. Since $(2k^{2})^{2}\leq \Delta<(2k^{2}+2)^{2}$, we deduce that $\Delta=(2k^{2})^{2}$. This holds only if $k=1$. Solving the quadratic equation in $s$, we obtain $s=2$. It follows that $q=5$, contradicting the condition $q\geq13$.

Next suppose that $q$ is even, and write $q=2^f$, $f\ge 2$. If $q=4$, then \eqref{hhp} has no integer solution in $s$. Assume that $f\ge 3$ in the sequel. The right hand side of \eqref{hhp} is even, so $s$ is odd. Since $s^{2}+1\equiv 2\pmod {4}$, we deduce from \eqref{hhp} that $s+1=2^{f-2}a$ for some divisor $a$ of $2^{f}+1$. Then \eqref{hhp} reduces to $(2^{2f-5}a^{2}-2^{f-2}a+1)a=2^{f}+1$. It is elementary to show that the left hand side is an increasing function in $a$ for $a\ge 1$, and is strictly larger than $2^f+1$ when $a\ge3$. Therefore, we must have $a=1$. However,  $2^{2f-5}-2^{f-2}+1=2^{f}+1$ holds for no integer $f\ge 3$. This completes the proof.
\end{proof}

\begin{lemma}\label{ii}
The subgroup $M_{0}$ is not in Case 9 of Table \ref{values}.
\end{lemma}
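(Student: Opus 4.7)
\emph{Plan.} The strategy mirrors the proof of Lemma~\ref{hh}. Suppose for contradiction that $M_{0}\cong\mathrm{D}_{2(q+1)/\gcd(2,q-1)}$; by \eqref{point} this gives
\[
(s+1)(s^{2}+1)=\tfrac{1}{2}q(q-1).
\]
I will dispose of $q$ odd and $q$ even separately.

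For $q$ odd, a size estimate rules out $s+1\ge q$ (else $(s+1)(s^{2}+1)\ge q((q-1)^{2}+1)$, far exceeding the right-hand side), so $q\nmid s+1$. Since $\gcd(s+1,s^{2}+1)\mid 2$ and every prime divisor of $q=p^{f}$ is odd, $q$ must then divide $s^{2}+1$, and hence $s+1$ divides $\frac{q-1}{2}$. Writing $\frac{q-1}{2}=k(s+1)$ and substituting yields the quadratic
\[
s^{2}-2k^{2}s-(2k^{2}+k-1)=0
\]
in $s$, whose discriminant $4(k^{4}+2k^{2}+k-1)$ must be a perfect square. Comparing with the consecutive squares $(k^{2}+1)^{2}$ and $(k^{2}+2)^{2}$ shows that $k^{4}+2k^{2}+k-1$ is a square only when $k=2$ (the case $k=1$ gives $3$, and for $k\ge 3$ the quantity lies strictly between two consecutive squares). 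Solving the quadratic yields the unique arithmetic candidate $(s,q)=(9,41)$.

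The main obstacle is the elimination of this sporadic pair, for which I turn to the fixed-substructure machinery of Section~\ref{pre}. Let $g$ be an involution in $M_{0}=\mathrm{D}_{42}$. Since $41\equiv 1\pmod{4}$, Lemma~\ref{conjugacy} gives $|g^{X}|=861$ and $C_{X}(g)\cong\mathrm{D}_{40}$. All $21$ involutions of $\mathrm{D}_{42}$ are reflections (as $21$ is odd, the center is trivial) and they are $X$-conjugate, so Lemma~\ref{fixedpoints} returns $|\cP_{g}|=|\cL_{g}|=\frac{820\cdot 21}{861}=20$. The centralizer $C_{M_{0}}(g)=\langle g\rangle$ has order $2$, so $[C_{X}(g):C_{X}(g)\cap M_{0}]=20=|\cP_{g}|$, and by Lemma~\ref{transitiveaction} the group $C_{X}(g)$ acts transitively on both $\cP_{g}$ and $\cL_{g}$. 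Corollary~\ref{substructure}(ii) then forces $\cS_{g}$ to be a subquadrangle of order $(s',t')$ with $(s'+1)(s't'+1)=(t'+1)(s't'+1)=20$, whence $s'=t'$ and $(s'+1)((s')^{2}+1)=20$; the choices $s'=1,2,3$ give $4,15,40$, a contradiction.

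For $q=2^{f}$ even, parity of the displayed equation forces $s$ odd, and comparing $2$-adic valuations then yields $v_{2}(s+1)=f-2$. Writing $s+1=2^{f-2}a$ with $a$ odd forces $a\mid 2^{f}-1$, and substitution into the main equation gives
\[
2^{2f-4}a^{3}-2^{f-1}a^{2}+2a=2^{f+1}-2.
\]
The crude bound $2^{f-3}a^{3}\le a^{2}+4$ then confines $(a,f)$ to $a=1$ with $f\in\{3,4,5\}$, each of which fails by direct arithmetic. Apart from the sporadic $(q,s)=(41,9)$ case in the odd branch, everything reduces to the same kind of elementary Diophantine analysis used in Lemma~\ref{hh}.
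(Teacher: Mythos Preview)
Your proof is correct and follows essentially the same route as the paper's. The paper dispatches the Diophantine analysis by invoking ``similar arguments to those in the proof of Lemma~\ref{hh}'' to reach $(s,q)=(9,41)$, and then eliminates this case via exactly the involution fixed-substructure argument you give: $|g^X|=861$, $|g^X\cap M_i|=21$, hence $|\cP_g|=|\cL_g|=20$; $C_X(g)\cong\mathrm{D}_{40}$ acts transitively on both by Lemma~\ref{transitiveaction}, so Corollary~\ref{substructure} forces a sub-GQ of order $s'$ with $(s'+1)((s')^2+1)=20$, a contradiction. The only expository point worth adding is the paper's explicit remark that, since $|\cL_g|>0$, one may assume $\ell\in\cL_g$ (equivalently $g\in M_1$) before applying Lemma~\ref{transitiveaction} to the lines; your argument uses this implicitly.
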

\begin{proof}
Suppose to the contrary that $M_{0}=\mathrm{D}_{2(q+1)/\gcd(2,q-1)}$, $q\ne 7,9$. From \eqref{point}, we have \[(1+s)(1+s^2)=\frac{1}{2}q(q-1).\] By similar arguments to those in the proof of Lemma \ref{hh}, we deduce that $s=9$, $q=41$. Then $|\cP|=820$. Let $g$ be an involution in $M_{0}$, and let $\cS_g=(\cP_g,\cL_g)$ be its fixed structure.  By Lemma \ref{conjugacy}, $g^X$ consists of all the involutions of $X$, $|g^X|=861$ and $|C_X(g)|=40$. The dihedral group $\mathrm{D}_{42}$ has $21$ involutions, so $|M_i\cap g^X|=21$ for $i=0,1$. We deduce from Lemma \ref{fixedpoints} that $|\cP_g|=|\cL_g|=20$. We assume without loss of generality that we have chosen $\alpha,\ell$ from $\cP_g,\cL_g$ respectively, so that $g$ is in both $M_0,M_1$. The centralizer of $g$ in $M_i$ is $C_{M_i}(g)=\langle g\rangle$ for $i=0,1$, so we deduce from Lemma \ref{transitiveaction} that $C_X(g)$ is transitive on both $\cP_g$ and $\cL_g$. By Corollary \ref{substructure}, $\cS_{g}$ is a generalized quadrangle of order $(s',t')$. Since $|\cP_g|=|\cL_g|=20$, we have $s'=t'$ and $(1+s')(1+s'^2)=20$. There is no such integer $s'$: a contradiction. This completes the proof.
\end{proof}

In the sequel, we consider Cases 3, 4, 5, 6 in Table \ref{values} for $M_0$. We shall make use of the results in Section \ref{model} to exclude those cases. This will avoid solving the Diophantine equations involving $(s,q)$ arising from \eqref{point}. We start with a simple observation.

\begin{lemma}\label{lem_A5A4_p}
If $M_0=\mathrm{A}_5$ or $\mathrm{A}_4$, then $p\equiv 1\pmod{4}$.
\end{lemma}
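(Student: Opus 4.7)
The plan is to suppose, for contradiction, that $p\equiv 3\pmod 4$ and deduce a contradiction from the fixed substructure of an involution combined with Lemma \ref{abelian}. First I would reduce to the case $q=p\equiv 3\pmod 4$: for $M_0=\mathrm{A}_5$ with $q=p^2$ one automatically has $q\equiv 1\pmod 4$ (squares mod $4$), so only $q=p$ survives; and by Lemma \ref{PSL_67} we may take $M_1\cong M_0$. In each of $\mathrm{A}_5$ and $\mathrm{A}_4$ the involutions form a single conjugacy class with centralizer the Klein four group $V_4$ of order $4$, and by Lemma \ref{conjugacy} the involutions in $X$ also form a single class with $C_X(g)=\mathrm{D}_{q+1}$.

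Pick an involution $g\in X$. Lemma \ref{fixedpoints} then yields $|\cP_g|=|\cL_g|=(q+1)/4>0$, so by transitivity of $X$ and a relabelling of the base flag I may assume $g\in M_0\cap M_1$. The identity $[C_X(g):V_4]=(q+1)/4=|\cP_g|$ feeds Lemma \ref{transitiveaction} to force $C_X(g)=\mathrm{D}_{q+1}$ to act transitively on $\cP_g$, and symmetrically on $\cL_g$. Corollary \ref{substructure}(ii) then asserts that $\cS_g$ is a generalized quadrangle of order $(s',t')$, and $|\cP_g|=|\cL_g|$ forces $s'=t'$.

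Next let $K$ be the cyclic subgroup of index $2$ in $\mathrm{D}_{q+1}$, of order $(q+1)/2$. Because $q\equiv 3\pmod 4$ makes $(q+1)/2$ even, $K$ contains the unique nontrivial central element of $\mathrm{D}_{q+1}$; since $g\in Z(C_X(g))$, that central element must be $g$ itself. The other two involutions of $V_4$ are then forced to be reflections, hence to lie outside $K$, so $K\cap M_0=K\cap V_4=\langle g\rangle$. Consequently the $K$-orbit of $\alpha$ has length $|K|/2=(q+1)/4=|\cP_g|$, giving transitivity of $K$ on $\cP_g$; the symmetric statement holds on $\cL_g$. If $\cS_g$ is thick (i.e.\ $s'=t'\ge 2$), this abelian group transitive on both points and lines contradicts Lemma \ref{abelian}; the only remaining possibility $s'=t'=1$ would force $(q+1)/4=4$, i.e., $q=15$, which is not a prime power. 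Either way we reach a contradiction, proving $p\equiv 1\pmod 4$. The main delicate point is pinning down that $g$ is the central involution of $\mathrm{D}_{q+1}$ and that the other two elements of $V_4$ lie outside $K$, which is precisely where the parity of $(q+1)/2$ forced by $q\equiv 3\pmod 4$ enters the argument.
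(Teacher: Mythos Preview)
Your argument for the case $q=p$ is correct and follows a genuinely different route from the paper: the paper gives a short number-theoretic argument, observing that $q$ must divide $s^2+1$ (since $s<q-1$ forces $q\nmid s+1$, and $\gcd(s+1,s^2+1)\le 2$), whence $-1$ is a square modulo $p$ and $p\equiv 1\pmod 4$. Your method instead transplants the fixed-substructure machinery of Lemma~\ref{regulargroup} to the congruence $q\equiv 3\pmod 4$, using $C_X(g)=\mathrm{D}_{q+1}$ in place of $\mathrm{D}_{q-1}$. The identification $K\cap M_0=\langle g\rangle$ is sound: $K\le C_X(g)$ forces $K\cap M_0\le C_{M_0}(g)=V_4$, and your parity analysis shows the two non-central involutions of $V_4$ are reflections, hence outside $K$. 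The paper's approach is quicker and more elementary; yours has the virtue of reusing the same engine that drives the later case analysis.

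There is, however, a genuine gap in your reduction step. You dismiss the subcase $M_0=\mathrm{A}_5$ with $q=p^2$ by noting that $q\equiv 1\pmod 4$ automatically, and conclude ``only $q=p$ survives''. But the lemma asserts $p\equiv 1\pmod 4$, not $q\equiv 1\pmod 4$; these are not equivalent when $q=p^2$. For instance $p=7$, $q=49$ satisfies $p\equiv 3\pmod{10}$ (so Case~3 applies) and $q\equiv 1\pmod 4$, yet $p\equiv 3\pmod 4$, and your argument says nothing to exclude it. The paper's divisibility argument handles this uniformly: $q\mid s^2+1$ implies $p\mid s^2+1$, forcing $p\equiv 1\pmod 4$ regardless of whether $q=p$ or $q=p^2$. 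You could patch your proof by running the same fixed-substructure argument with $C_X(g)=\mathrm{D}_{q-1}$ for $q=p^2$ (the parity of $(q-1)/2$ still works since $p^2-1\equiv 0\pmod 4$), but as written the $q=p^2$ case is simply not covered.
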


\begin{proof}
If $p=3$, then we must have $M_0=\mathrm{A}_5$, $q=9$. By \eqref{point},
$(s+1)(s^{2}+1)=6$ which has no integer solution in $s$. Similarly, we have $p\neq5$. Hence we assume that $p\ge 7$ in the sequel.
We prove the claim for $M_0=\mathrm{A}_5$, and the other case is similar. By \eqref{point}, we have
\[
(s+1)(s^{2}+1)=\frac{1}{120}q(q^{2}-1).
\]
It follows that $s<q-1$, and so $q$ does not divide $s+1$. Since $\gcd(s+1,s^2+1)$ is at most $2$ and $p\ge 7$, we see that $q$ divides $s^2+1$. This implies that $-1$ is a square modulo $p$, so $p\equiv 1\pmod{4}$ as desired.
\end{proof}

Suppose that $M_0$ is one of the groups in Table \ref{transitive}, where the Case column refers to its numbering in Table \ref{values}. There is a unique conjugacy class of elements of the specified order $r$ in the third column in $X$ by Lemma \ref{conjugacy},  and we fix such an element $g$ that is contained in $M_0$. Let $\cS_g=(\cP_g,\cL_g)$ be the fixed structure of $g$. The size of $g^X$ and the structure of $C_X(g)$ are available in Lemmas \ref{conjugacy} and \ref{conjugacy2}. Then $g^X\cap M_i$ consists of all the elements of order $r$ in $M_i$ which form a single conjugacy class of $M_i$, where $i=0,1$. We are thus able to calculate $|g^X\cap M_i|$ and $C_{M_i}(g)$ for $i=0,1$ in each case either by Magma \cite{Bosma} or Lemma \ref{conjugacy}. By Lemma \ref{fixedpoints}, we calculate $|\cP_g|$ and $|\cL_g|$ which turn out to be equal. The conditions on $q$ in the last column of Table \ref{transitive} arise from Lemmas \ref{conjugacy}, \ref{conjugacy2} or \ref{lem_A5A4_p}.

\begin{table}
\begin{center}
\caption{Information about elements of specified orders in the maximal subgroup $M_{i}$ of $X$}\label{transitive}
\begin{tabular}{ccccccccc}\hline
Case &$M_{i}$ &  $o(g)$ & $|g^X|$ & $|g^X\cap M_{i}|$ &  $C_{X}(g)$ & $C_{M_{i}}(g)$ & $|\mathcal{P}_g|,|\mathcal{L}_{g}|$  & Condition\\ \hline

3 &$\mathrm{A}_{5}$ & 2 & $\frac{q(q+1)}{2}$ & 15 &  $\mathrm{D}_{q-1}$ & $\mathrm{C}_{2}\times\mathrm{C}_{2}$ &$\frac{q-1}{4}$ &  $p\equiv 1\pmod{4}$\\

4& $\mathrm{A}_{4}$ & 2 & $\frac{q(q+1)}{2}$ & 3  & $\mathrm{D}_{p-1}$ & $\mathrm{C}_{2}\times\mathrm{C}_{2}$ & $\frac{p-1}{4}$ &  $p\equiv 1\pmod{4}$\\

5& $\mathrm{S}_{4}$ & 3 & $p(p+1)$ & 8  & $\mathrm{C}_{\frac{p-1}{2}}$ & $\mathrm{C}_{3}$ & $\frac{p-1}{6}$ &  $p\equiv 1\pmod{3}$\\

& $\mathrm{S}_{4}$ & 3 &  $p(p-1)$ & 8 & $\mathrm{C}_{\frac{p+1}{2}}$ & $\mathrm{C}_{3}$ & $\frac{p+1}{6}$ &  $p\equiv 2\pmod{3}$\\

6& $\mathrm{PSL}(2,q_{0})$ & 2 & $\frac{q(q-1)}{2}$ & $\frac{q_{0}(q_{0}-1)}{2}$ & $\mathrm{D}_{q+1}$ & $\mathrm{D}_{q_{0}+1}$ & $\frac{q+1}{q_{0}+1}$ & $q_{0}\equiv 3\pmod{4}$\\

& $\mathrm{PSL}(2,q_{0})$ & 2 & $\frac{q(q+1)}{2}$ & $\frac{q_{0}(q_{0}+1)}{2}$ &  $\mathrm{D}_{q-1}$ & $\mathrm{D}_{q_{0}-1}$ & $\frac{q-1}{q_{0}-1}$ &$q_{0}\equiv 1\pmod{4}$\\ \hline
\end{tabular}
\end{center}
\end{table}

The group $C_X(g)$ stabilizes both $\cP_g$ and $\mathcal{L}_{g}$, and both have sizes greater than $1$ by direct check. We assume without loss of generality that $\alpha,\ell$ are chosen from $\cP_g,\cL_g$ respectively.  By Lemma \ref{transitiveaction},  we deduce that $C_X(g)$ is transitive on both $\cP_g$ and $\cL_g$ in each case. By Corollary \ref{substructure}, $\mathcal{S}_g$ is a generalized quadrangle of order $(s',t')$. Since $\cP_g$ and $\cL_g$ have the same size, we have $s'=t'$. If $p=23$ and $M_0\cong \mathrm{S}_{4}$, then $|\cP_g|=4$ and $|\cP|=11\cdot 23=(1+s)(1+s^2)$ has no integer solution in $s$. It is routine to check that $|\cP_g|\ne 4$ in the other cases, so $\cS_g$ is thick, i.e., $s'\ge 2$.

\begin{lemma}\label{regulargroup}
The subgroup $M_{0}$ is not in one of Cases 3, 4, 5, 6 in Table \ref{values}.
\end{lemma}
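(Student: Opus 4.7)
The plan is to exhibit, in each of the four cases, an abelian subgroup of $C_X(g)$ that acts transitively on both $\cP_g$ and $\cL_g$. Since $\cS_g$ is a thick generalized quadrangle by the paragraph preceding the lemma statement, this will contradict Lemma \ref{abelian}. Case 5 of Table \ref{values} is immediate: there $C_X(g)$ is already cyclic, hence abelian, and it has been shown to act transitively on both $\cP_g$ and $\cL_g$.

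For Cases 3, 4, 6, the centralizer $C_X(g)$ is dihedral, and I would take $K$ to be its unique cyclic subgroup of index $2$, of order $|C_X(g)|/2$. The first preliminary task is to verify the parity condition that makes $g\in K$: in Cases 3 and 4, Lemma \ref{lem_A5A4_p} gives $p\equiv 1\pmod 4$, so $q-1$ (or $p-1$) is divisible by $4$; in Case 6, since $q=q_0^r$ with $r$ odd, $q\equiv q_0\pmod 4$, so whichever of $q\pm 1$ equals $|C_X(g)|$ is divisible by $4$. In each case $|K|$ is even, so $K$ has a unique involution, which must be the central involution of $C_X(g)$. But $g$ itself is central in $C_X(g)$, so that involution is exactly $g$, and in particular $g\in K\cap M_0\cap M_1$.

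Next I would compute $|K\cap M_0|$. The intersection is cyclic (being a subgroup of the cyclic group $K$) and contained in $C_{M_0}(g)$. In Cases 3 and 4 we have $C_{M_0}(g)\cong\mathrm{C}_2\times\mathrm{C}_2$, whose cyclic subgroups have order at most $2$; combined with $g\in K\cap M_0$ this forces $|K\cap M_0|=2=|C_{M_0}(g)|/2$. In Case 6, $C_{M_0}(g)=\mathrm{D}_m$ with $m=q_0\pm 1$ depending on the sub-case; the index-$2$ cyclic subgroup $K_0$ of $C_{M_0}(g)$ has generator of order $m/2$, and any element of order at least $3$ in $C_X(g)$ must lie in $K$ (since elements outside $K$ are reflections). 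Hence $K_0\leq K$ as long as $m/2\geq 3$, and when $m/2=2$ the argument of the previous sentence still gives $|K\cap M_0|=2=m/2$. In every sub-case we conclude $|K\cap M_0|=|C_{M_0}(g)|/2$.

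Applying the orbit-stabilizer theorem to $\alpha$ yields
\[
|\alpha^K|=\frac{|K|}{|K\cap M_0|}=\frac{|C_X(g)|/2}{|C_{M_0}(g)|/2}=\frac{|C_X(g)|}{|C_{M_0}(g)|}=|\cP_g|,
\]
so $K$ is transitive on $\cP_g$. By the symmetric calculation with $M_1$ replacing $M_0$, the cyclic group $K$ is also transitive on $\cL_g$, and Lemma \ref{abelian} applied to the thick generalized quadrangle $\cS_g$ gives a contradiction. The main obstacle I anticipate is the bookkeeping around the small sub-cases where $C_{M_0}(g)\cong\mathrm{C}_2\times\mathrm{C}_2$ (so ``the'' index-$2$ cyclic subgroup is not unique), but this is circumvented by simply bounding the order of any cyclic subgroup of $C_{M_0}(g)$ by $2$.
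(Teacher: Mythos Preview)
Your proposal is correct and follows essentially the same approach as the paper: both arguments take $K$ to be the maximal cyclic subgroup of $C_X(g)$, compute $|K\cap M_i|$ to conclude that $K$ is transitive on $\cP_g$ and $\cL_g$, and then invoke Lemma~\ref{abelian}. The only difference is in the bookkeeping for Case~6: the paper argues the \emph{upper} bound $|K\cap M_0|\le \tfrac12|C_{M_0}(g)|$ (a cyclic subgroup of $\mathrm{D}_{4m}$ has order dividing $2m$) and combines it with the automatic inequality $|\alpha^K|\le |\cP_g|$, whereas you prove the lower bound $K_0\le K\cap M_0$ and leave the matching upper bound implicit in your opening observation that $K\cap M_0$ is cyclic inside $C_{M_0}(g)$---either direction suffices once made explicit.
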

\begin{proof}
We continue with the arguments preceding this lemma. Let $K$ be a cyclic subgroup of $C_X(g)$ of the largest possible order, which is unique in each case. We first establish the facts about $K$ in Table \ref{tab_K}. It suffices to determine $|K\cap M_i|$ for $i=0,1$ in each case, since the remaining information in Table \ref{tab_K} follows easily. We only give details for $K\cap M_0$, since $K\cap M_1$ is determined similarly. In the Cases 3, 4, 5, $K$ contains $g$ as is clear in the proof of Lemma \ref{conjugacy}. Since $K\cap M_0$ is cyclic and $C_{M_0}(g)$ is elementary abelian $r$-group with $r=o(g)$, we deduce that $K\cap M_0=\langle g\rangle$ as desired in the Cases 3, 4, 5. We observe that a cyclic subgroup of $\mathrm{D}_{4m}$ has order dividing $2m$, so for Case 6 the subgroup $K\cap M_0$ has order dividing $\frac{1}{2}|C_{M_0}(g)|$. Therefore, $|\alpha^K|$ is a multiple of  $\frac{2\cdot |K|}{|C_{M_0}(g)|}=|\cP_g|$. Since $\alpha^K$ is contained in $\cP_g$, we deduce that $|\alpha^K|=|\cP_g|$ and so $|K\cap M_0|=\frac{1}{2}|C_{M_0}(g)|$ as desired.

\begin{table}
\begin{center}
\caption{Information about the cyclic subgroup $K$}\label{tab_K}
\begin{tabular}{cccccc}\hline
Case & $C_X(g)$ &  $K$ & $K\cap M_{i}$ &  $[K:\,K\cap M_i]$  & Condition\\ \hline
3  & $\mathrm{D}_{q-1}$& $\mathrm{C}_{\frac{q-1}{2}}$ &$\langle g\rangle$ & $\frac{q-1}{4}$ & $p\equiv 1\pmod{4}$\\
4  & $\mathrm{D}_{p-1}$& $\mathrm{C}_{\frac{p-1}{2}}$ &$\langle g\rangle$ & $\frac{p-1}{4}$ & $p\equiv 1\pmod{4}$\\

5  & $\mathrm{C}_{\frac{p-1}{2}}$ & $\mathrm{C}_{\frac{p-1}{2}}$ & $\langle g\rangle$ &  $\frac{p-1}{6}$ &  $p\equiv 1\pmod{3}$\\

   & $\mathrm{C}_{\frac{p+1}{2}}$ & $\mathrm{C}_{\frac{p+1}{2}}$ & $\langle g\rangle$ &   $\frac{p+1}{6}$ &  $p\equiv 2\pmod{3}$\\

6  & $\mathrm{D}_{q+1}$ & $\mathrm{C}_{\frac{q+1}{2}}$ & $\mathrm{C}_{\frac{q_0+1}{2}}$ &
$\frac{q+1}{q_{0}+1}$ & $q_{0}\equiv 3\pmod{4}$\\

   & $\mathrm{D}_{q-1}$ & $\mathrm{C}_{\frac{q-1}{2}}$ & $\mathrm{C}_{\frac{q_0-1}{2}}$ &
$\frac{q-1}{q_{0}-1}$ & $q_{0}\equiv 1\pmod{4}$\\  \hline
\end{tabular}
\end{center}
\end{table}
We are now ready to establish the claim. From Table \ref{tab_K}, we deduce that $|\alpha^K|=[K:\,K\cap M_0]=|\cP_g|$, $|\ell^K|=[K:\,K\cap M_1]=|\cL_g|$, so that $K$ is transitive on both $\cP_g$ and $\cL_g$ in each case. This contradicts Lemma \ref{abelian} and completes the proof.
\end{proof}

We now summarize the results that we have proved so far in this section. In Lemmas \ref{aa} and \ref{lem_M0M1_max}, we have excluded the cases where $M_0:=X\cap G_\alpha$ or $M_1:=X\cap G_l$ is not maximal in $X=\mathrm{PSL}(2,q)$. Assume that $M_0$ and $M_1$ are both maximal in $X$, so that $X$ is also primitive on both points and lines. We assume without loss of generality that $G=X$. In Section \ref{sneqt} we have handled the cases where $M_0,M_1$ have different Case numbering in Table \ref{values}, and in Section \ref{seqt} we have handled the cases where $M_0$ and $M_1$ have the same Case numbering in the table. Putting together, this completes the proof of Theorem \ref{main}.

\vspace*{10pt}

\noindent\textbf{Acknowledgements.}  The authors thank John Bamberg for pointing out an error in an earlier version. They also thank the anonymous referees for their valuable comments and suggestions. This work was supported by National Natural Science Foundation of China (Grant Numbers 12171428 and 12225110).

\begin{center}
	\scriptsize
	\setlength{\bibsep}{0.5ex}  
		\linespread{0.5}
	\bibliographystyle{plain}

\end{center}

\end{document}